\numberwithin{equation}{section}
\newtheorem{theorem}{Theorem}[section]
\newtheorem{lemma}[theorem]{Lemma}
\newtheorem{proposition}[theorem]{Proposition}
\newtheorem{corollary}[theorem]{Corollary}
\theoremstyle{definition}
\newtheorem{remark}[theorem]{Remark}
\newtheorem{remarks}[theorem]{Remarks}
\newtheorem{example}[theorem]{Example}
\newtheorem{definition}[theorem]{Definition}
\newcommand{\jb}{\tiny{$\bullet$}}
\newcommand{\pb}{\ar@{}[dr]|{\mbox{\LARGE{$\lrcorner$}}}} 
\newcommand{\lra}{\longrightarrow}
\newcommand{\grisf}{\cellcolor{black!10}}
\newcommand{\mk}{\mathbf{k}}
\newcommand{\D}{\mathcal{D}}
\newcommand{\M}{\mathcal{M}}
\newcommand{\Alg}{\mathbf{Alg}}
\newcommand{\Op}{\mathbf{Op}}
\newcommand{\QQ}{\mathbb{Q}}
\newcommand{\Ho}{\mathrm{Ho}}
\newcommand{\Ker}{\mathrm{Ker}\,}
\newcommand{\id}{\mathrm{id}}
\newcommand{\Lie}{\mathcal Lie}
\newcommand{\Com}{\mathcal Com}
\newcommand{\Ass}{\mathcal Ass}
\newcommand{\Ger}{\mathcal Ger}
\newcommand{\alglibre}[2]{#1 \left\langle #2 \right\rangle }
\newcommand{\foa}[1]{P\langle #1 \rangle}
\begin{document}

\title{Sullivan Minimal models of operad algebras}

\author{Joana Cirici}
\address[J. Cirici]{Fachbereich Mathematik und Informatik, Universit\"{a}t M\"{u}nster\\
Einsteinstraße 62\\
48149 Münster\\
Germany }

\author{Agust\'{\i} Roig}
\address[A. Roig]{Dept. Matemàtiques \\ Universitat Polit\`{e}cnica de Catalunya, UPC \\ Diagonal 647, 08028 Barce\-lo\-na, Spain.}

\thanks{Cirici would like to acknowledge financial support from the German Research Foundation (SPP-1786) and partial support from the Spanish Ministry of Economy and Competitiveness (MTM2016-76453-C2-2-P). Roig partially supported by projects MTM2012-38122-C03-01/FEDER and 2014 SGR 634
}

\keywords{Minimal models, rational homotopy, operad algebras}
\subjclass[2010]{18D50, 55P62}

\begin{abstract}We prove the existence of Sullivan minimal models of operad algebras, for a quite wide family of operads in the category of complexes of vector spaces over a field of characteristic zero. Our construction is an adaptation of Sullivan's original \textit{step by step} construction to the setting of operad algebras.
The family of operads that we consider includes all operads concentrated in degree 0 as well as their minimal models. In particular, this gives Sullivan minimal models for algebras over $\Com$, $\Ass$ and $\Lie$, as well as over their minimal models $\Com_\infty$, $\Ass_\infty$ and $\Lie_\infty$. Other interesting operads, such as the operad $\Ger$ encoding Gerstenhaber algebras, also fit in our study.
\end{abstract}

\maketitle
\tableofcontents

\section{Introduction}

The classical construction of Sullivan minimal models of commutative differential
graded algebras over a field $\mk$ of characteristic zero,
is done step by step by a process of ``attaching cells'', called \textit{KS-extensions} (from Koszul-Sullivan) or \textit{Hirsch extensions}.
The data of these KS-extensions is
encoded in a graded vector space together with a linear differential,
whereas the multiplication of the algebra comes for free, thanks to the notion of free algebra.
With this in mind, it is natural to ask whether the cell attachment construction can be
extrapolated to a more general context.
An obvious candidate is the category of $P$-algebras,
where $P$ is an operad in the category of complexes of $\mk$-vector spaces.

While $P$-algebras can behave very badly, in the sense that operations with negative degrees can undo the work of previous steps in a cell attachment procedure,
many interesting operads given in nature (i.e. geometry, topology and physics) behave badly, but in a somewhat \emph{tame} way that we precise here:

Let $P$ be an operad in cochain complexes of $\mk$-vector spaces.
We will always assume that $P$ is connected ($P(1)=\mk$) and that it is either reduced ($P(0)=0$) or unitary ($P(0)=\mk$).
Let $r\geq 0$ be an integer.
We say that $P$ is \textit{$r$-tame} if for all $n\geq 2$, we have that
\[
P(n)^q=0\text{ for all }q\leq (1-n)(1+r) \ .
\]
Note that $r$-tame implies $(r+1)$-tame.
Examples of $0$-tame operads are: $\Ass$, $\Com$ and $\Lie$,
every operad concentrated in degree 0 and the operads $\Ass_\infty$, $\Com_\infty$ and $\Lie_\infty$.
More generally, minimal models of reduced $r$-tame operads are $r$-tame. An
example of $1$-tame operad is $\mathcal{G}er$, the one encoding Gerstenhaber algebras.

In the category of $P$-algebras, there is a notion of free $P$-algebra generated by a graded $\mk$-vector space.
From this notion, we define \textit{KS-extensions of free $P$-algebras} analogously to the rational homotopy setting of $\Com$-algebras.
We say that a $P$-algebra $\M$ is a \textit{Sullivan minimal $P$-algebra}
if it is the colimit of a sequence of KS-extensions
starting from $P(0)$,
ordered by non-decreasing positive degrees.
A \textit{Sullivan minimal model} of a $P$-algebra $A$ is
a Sullivan minimal $P$-algebra $\M$, together with a morphism $f:\M\to A$ of $P$-algebras
whose underlying map of cochain complexes induces an isomorphism in cohomology; i.e., a
quasi-isomorphism of $P$-algebras.
As a warning, let us remark that here and elsewhere along the paper,
$f$ and all algebra morphisms are morphisms in the strict sense, not $\infty$-morphisms.

As in the rational homotopy setting, we require cohomological connectedness for our algebras.
A $P$-algebra $A$ is called \textit{$0$-connected}
if $H^i(A)=0$ for all $i<0$ and the unit map $\eta:P(0)\to A$ induces an isomorphism $P(0)\cong H^0(A)$.
Let $r\geq 0$. Then $A$ is called \textit{$r$-connected} if, in addition, we have that $H^1(A)=\cdots =H^r(A)=0$.
We prove:

\newtheorem*{I1}{\normalfont\bfseries Theorem $\textbf{\ref{existenceminimal}}$}
\begin{I1}
Let $P$ be an $r$-tame operad.
Then every $r$-connected $P$-algebra $A$ has a Sullivan minimal model
$f:\M\to A$ with  $\M^0=P(0)$ and $\M^i=0$ for all $i<r$ with $i\neq 0$.
Furthermore, if $A$ is $(r+1)$-connected and $H^*(A)$ is of finite type, then $\M$ is of finite type.
\end{I1}
Note that in the particular case $P=\Com$ we recover Sullivan's
theorem of minimal models for commutative differential graded algebras over $\mk$.
We also obtain Sullivan minimal models for $0$-connected $P$-algebras,
when $P$ is one of the operads
$\Ass$, $\Lie$,  $\Com_\infty$, $\Ass_\infty$ or $\Lie_\infty$ among others.
Furthermore, the above result gives Sullivan minimal models for $1$-connected
$\Ger$-algebras. All these minimal models are unique:

\newtheorem*{I2}{\normalfont\bfseries Theorem $\textbf{\ref{unicitat}}$}
\begin{I2}
Let $P$ be an $r$-tame operad and $A$ an $r$-connected $P$-algebra.
Let $f:\M\to A$ and $f':\M'\to A$ be two Sullivan minimal models of $A$.
Then there is an isomorphism $g:\M\to \M'$, unique up to homotopy,
such that $f'g\simeq f$.
\end{I2}

\begin{remarks}A few remarks are in order:
\begin{enumerate}[(1)]
 \item  \textit{Relation with existing Sullivan minimal models.}
Sullivan's classical construction of minimal models for commutative differential graded
algebras has been adapted to several other algebraic settings. Examples are
Quillen's models of differential graded Lie algebras \cite{Q2},
the models for chain differential graded  (Lie) algebras
of Baues-Lemaire \cite{BL} and Neisendorfer \cite{Ne},
the theory of Leibniz algebras of \cite{Livernet2}
and, more closely related to our approach, the minimal models
of chain $P$-algebras, where $P$ is a Koszul operad concentrated in degree 0,
developed by Livernet in her PhD Thesis \cite{Livernet1}.
As we show in Section $\ref{SecChain}$, our results are equally valid for cochain and chain algebras,
after minor modifications are taken into account.
In particular, our work generalizes all of the above mentioned studies.
Furthermore, the results of this paper make precise some of the ideas contained in \cite{Sumaster}, where Sullivan
defines triangular $P$-algebras as free $P$-algebras with a partial ordering on their generators
and sketches a theory of triangular resolutions.
\\

\item \textit{Koszul duality theory.}
For Koszul quadratic $P$-algebras, there is a theory of quasi-free resolutions which give minimal models in some situations
(see \cite{LV}, \cite{Milles1}, \cite{Milles2}).
While there is a certain overlap of algebras for which both Koszul duality theory and
our Sullivan algorithm for algebras over tame operads apply, let us mention some notable differences.
First, to know whether an operad is Koszul or not, can prove to be very difficult (see \cite{MSS}, Remark 3.98).
The theory developed in this paper doesn't require operads to be Koszul, not even quadratic. In particular,
there is no restriction on the height of the relations among their generators.
In contrast, we do impose some restrictions on the arity-degree range of the elements of the operad, but
this condition is straightforward to verify.
Second, while Koszul duality theory applies to quadratic algebras satisfying certain conditions,
our algorithm applies to all sufficiently connected $P$-algebras, once the operad $P$ is proven to be tame.
Furthermore, we produce minimal models for both
unitary, $P(0)=\mk$, and non-unitary, $P(0)=0$ algebras,
while Koszul duality theory applies only to the latter case.
Lastly, let us mention that in Koszul duality theory, minimal models are constructed via the cobar resolution of the associated coalgebra,
while, in this paper, we give ``step by step'' minimal models, following Sullivan's classical approach.
This may be useful, for instance, to compute partial minimal models up to a certain degree and extract homotopical information.
\\

\item  \textit{Kadeishvili's models.}
There are many results in the literature about ``minimal models'' for operad algebras in the $\infty$-sense.
Prominently, Kadeishvili \cite{Kadeishvili} defined minimal models of $A_\infty$-algebras as $A_\infty$-algebras
with trivial differential.
Similarly, there is the Homotopy Transfer Theorem for $P_\infty$-algebras (see \cite{LV})
and the theory of minimal models for operad algebras developed in \cite{CL}.
As it is well-known, minimal models \`{a} la Kadeishvili do not correspond to minimal models \`{a} la Sullivan,
the main differences being that for the first ones, morphisms are
$\infty$-morphisms and minimality is a vanishing condition on the differential, while for the later, morphisms are strict and minimality
involves freeness and a certain behavior of the (not-necessarily trivial) differential.
However, a characterizing property is shared by the two approaches: every quasi-isomorphism between minimal algebras is an isomorphism.\\

\item  \textit{Minimal models of operads.}
Every reduced operad $P$ in the category of complexes of $\mk$-vector spaces such that $H(P)(1)=\mk$
has a minimal model (defined as a free operad whose differential is decomposable).
Here we study minimal models of the algebras, and not of the operads themselves.
However, there is a relation between the two problems that we address in this paper.
The idea, is that one can consider the category of algebras above all operads
as a fibred category. We show that minimal objects in this category are given by those objects
that are both minimal on the fiber and the base.
This provides a global invariance of our minimal models.
\end{enumerate}
\end{remarks}

We explain the contents of the paper. In Section $\ref{Secprelim}$ we collect well-known results on operads and operad algebras.
In Section $\ref{Sectionhomotopy}$ we develop the basic homotopy theory of operad algebras.
In Section $\ref{SecMinimals}$ we introduce $r$-tame operads and prove the existence of minimal models for algebras over these operads. We also show that the minimal model of every $r$-tame operad is $r$-tame, and give some examples.
Section $\ref{SecUniqueness}$ deals with the uniqueness of our minimal models.
In Section $\ref{SecVariable}$ we study the fibred category of algebras over all operads and give global minimal models in this case.
Lastly, in Section $\ref{SecChain}$ we explain the case of chain operad algebras (with homological degree) and compute an example of $\Ger_\infty$ minimal model.

\section{Preliminaries}\label{Secprelim}
In this first section, we recall some main constructions for operads and operad algebras in the category of cochain complexes of vector spaces over a field of characteristic 0 and fix notation. For preliminaries on operads, we refer to \cite{MSS}, \cite{LV}, \cite{Fresse}
and \cite{KM}. We refer to \cite{GM}, \cite{FHT} and the original paper of Sullivan \cite{Su} for a review of rational homotopy theory.

Throughout this paper, let $\mk$ denote a field of characteristic 0.

\subsection*{Operads in cochain complexes}
We will consider unital symmetric operads in the
category of unbounded cochain complexes of vector spaces over $\mk$.
Denote by $\Op$ the category of such operads.

Given an operad $P$ in $\Op$ we will denote by
\[\gamma^P_{l;m_1,\dots ,m_l} : P(l)\otimes P(m_1) \otimes \dots \otimes P(m_l)
\longrightarrow P(m_1 + \cdots + m_l)\]
its structure morphisms and by $\eta : \mk \to P(1)$
its unit. These morphisms satisfy equivariance, associativity, and unit axioms (see \cite{MSS}, Definition I.4). Alternatively, we can use the 
equivalent data of partial
composition operations
\[
\circ_i : P(m)\otimes P(n) \longrightarrow P(m+n-1)  \ .
\]
An operad $P$ is called \textit{unitary} if $P(0)=\mk$ is concentrated in degree 0.
It is called \textit{reduced} if $P(0)=0$.
We will say that $P$ is \textit{connected} if $P(1)=\mk$ is concentrated in degree 0.
In this paper we will always consider connected operads that are either unitary or reduced.

\subsection{Operad algebras}
Let $P\in\Op$ be an operad.
Denote by $\Alg_P$ the category of $P$-algebras.
For a $P$-algebra $A$, we will denote by
$\theta_A(l): P(l)\otimes_{\Sigma_l} A^{\otimes l} \to A$
its structure morphisms. These are subject
to natural associativity and unit constraints (see \cite{KM}).

Since every $P$-algebra has an underlying cochain complex, we
have a notion of \textit{quasi-isomorphism} in $\Alg_P$ given by those morphisms
of $P$-algebras whose underlying morphism of cochain complexes induces an isomorphism in cohomology.

We next recall some constructions in the category of $P$-algebras that will be used in the sequel.

\subsection{Functorial properties}(c.f. \cite{LV}, 5.2.14).\label{functorial}
Every morphism of operads $F:P\to Q$ induces a {\it reciprocal image\/} or \emph{restriction of scalars}
functor $F^*:\Alg_Q \to\Alg_P$
defined on objects $B \in \Alg_Q$ by the
compositions $\theta_{F^*B}(l) = \theta_B(l) \circ (F(l) \otimes \id_B^{\otimes l}) : P(l) \otimes_{\Sigma_l} B^{\otimes l} \to B$.
Note that $F^*$ preserves quasi-isomorphisms and surjective morphisms, since the underlying complexes remain unchanged.

\subsection{Tensor product} Let $P,Q\in\Op$ be two operads.
Their pointwise tensor product is the operad $P\otimes Q$ whose arity
$l$ is the cochain complex $P(l) \otimes Q(l)$. Given a $P$-algebra $A$ and a $Q$-algebra $B$,
their tensor product as cochain complexes $A\otimes B$ has a natural structure of $(P\otimes Q)$-algebra.
The operad $\Com$ being the unit of our tensor product of operads, one has $P\otimes \Com = P$ and hence
the tensor product $A\otimes K$ of any $P$-algebra $A$ with a $\Com$-algebra $K$  is always a $P$-algebra. This gives a bifunctor
$
\Alg_P \times \Alg_{\Com} \longrightarrow \Alg_P
$
defined on objects by $(A, K) \mapsto A\otimes K$.

\subsection{Free algebras}(c.f. \cite{LV}, Section 5.2.5)\label{freealg}
Let $P\in\Op$ be an operad and let $V$ be a graded vector space. If we forget the differential of $P$, the \textit{free $P$-algebra generated by $V$} is the $P$-algebra
\[\foa{V} = \bigoplus_{m\geq 0} \left( P(m) \otimes_{\Sigma_m} V^{\otimes m} \right)\]
with the structure maps
$\theta(m):P(m)\otimes_{\Sigma_m}\foa{V}^{\otimes m}\to \foa{V}$
given by the composition of the shuffle isomorphism followed by the structure morphisms $\gamma$ of $P$:
\[
\xymatrix{
P(l)\otimes\left(P(m_1)\otimes_{\Sigma_{m_1}}V^{\otimes m_1}\right)\otimes\cdots\otimes \left(P(m_l)\otimes_{\Sigma_{m_l}}V^{\otimes m_l}\right)\ar[d]^{Sh}_{\cong}\\
P(l)\otimes \left(P(m_1)\otimes\cdots\otimes P(m_l)\right)\otimes_{\Sigma_{m_1}\times\cdots\times \Sigma_{m_l}}V^{\otimes (m_1+\cdots+m_l)}\ar[d]^{\gamma_{l;m_1,\cdots,m_l\otimes 1}}\\
P(m_1+\cdots+m_l)\otimes V^{\otimes (m_1+\cdots+m_l)}
}
\]
By the universal property of the free $P$-algebra (\cite{LV}), for any linear map $f:V\to A$ of degree 0, there exists a unique morphism of $P$-algebras $\foa{V} \to A$ that restricted to $V$ agrees with $f$.

\begin{remark} Note that the formula for the free $P$-algebra generated by $V$ also makes sense if $P$ carries a non-trivial differential. 
Also, it has the universal property of a free object in the category of $P$-dg algebras for graded vector spaces $V$ with zero differentials 
and $\mk$-linear graded maps $f : V \longrightarrow ZA$.
\end{remark}

\subsection{Cone of a morphism} Given a morphism $f:A\to B$ of $P$-algebras, we denote by $C(f)$ the \textit{cone of $f$}.
This is the cochain complex given by $C(f)^n=A^{n+1}\oplus B^n$ with differential $d(a,b)=(-da,-fa+db)$.
The morphism $f$ is a quasi-isomorphism of $P$-algebras if and only if $H^*(C(f))=0$.

\section{Basic homotopy theory of operad algebras}\label{Sectionhomotopy}
Throughout this section we let $P\in\Op$ be a fixed operad in the category of cochain complexes of vector spaces over $\mk$. We first introduce KS-extensions of $P$-algebras and prove that they satisfy the lifting property with respect to surjective quasi-isomorphisms. Then, we give some main properties of homotopies between morphisms of $P$-algebras.

\begin{remark} In general, in order to define extensions one would require the not easy notion of \emph{tensor product} of $P$-algebras (see \cite{Hin01}, \cite{SU04}, \cite{MS06}, \cite{Lod11}, for instance). Fortunately, in our case it suffices to consider tensor products of \emph{free} (non-differential) algebras.
\end{remark}

\begin{definition}\label{defKS}Let $n>0$ be an integer. Let $A= \foa{V} \in \Alg_P$ be free as graded algebra.
A \textit{degree $n$ KS-extension} of $A$ is the free graded $P$-algebra
\[
A\sqcup_d\foa{V'}:=\foa{V\oplus V'} \ ,
\]
where $V'$ is a graded vector space of homogeneous
degree $n$ and
$d:V'\to Z^{n+1}(A)$ a $\mk$-linear map.
The differential on $A\sqcup_d\foa{V'}$ is defined as the only $P$-derivation extending $d$. This derivation squares
to zero because so do its restrictions to $P$, $A$ and $V'$.
\end{definition}

We have the following universal property for KS-extensions:
\begin{lemma}\label{univ_morfisme}
Let $A\sqcup_d\foa{V'}$ be a KS-extension of a free $P$-algebra $A=\foa{V}$, and let
$f:A\to B$ be a morphism of $P$-algebras. A morphism
$f':A\sqcup_d\foa{V'}\to B$ extending $f$ is uniquely determined by a linear map
$\varphi:V'\to B$ of degree 0 satisfying $d\varphi=fd$.
\end{lemma}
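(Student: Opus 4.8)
The plan is to reduce the statement to the universal property of free $P$-algebras from Section~\ref{freealg}, treating the differentials as a separate and easy step. First I would forget all differentials: by Definition~\ref{defKS}, as a graded $P$-algebra $A\sqcup_d\foa{V'}$ is by construction $\foa{V\oplus V'}$, and the inclusion $V\hookrightarrow V\oplus V'$ induces a morphism of graded $P$-algebras $\foa{V}\to\foa{V\oplus V'}$ that identifies $A$ with the sub-$P$-algebra generated by $V$. By the universal property, a morphism of graded $P$-algebras $f':\foa{V\oplus V'}\to B$ is then the same as a degree-$0$ linear map $V\oplus V'\to B$; and since $f$ is itself determined by $f|_V$, such an $f'$ restricts to $f$ on $A$ if and only if $f'|_V=f|_V$. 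So the graded $P$-algebra morphisms $f'$ extending $f$ are in bijection with degree-$0$ linear maps $\varphi:=f'|_{V'}:V'\to B$, and it only remains to see which of these are compatible with the differentials.

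Next I would impose that $f'$ be a morphism of \emph{dg} $P$-algebras, i.e.\ that $D:=f'\circ d-d\circ f'$ vanish (here $d$ also denotes the differentials of $A\sqcup_d\foa{V'}$ and of $B$, by the same abuse as in the statement). Evaluating $D$ on a decomposable $\theta(\mu;x_1,\dots,x_m)$, using that $f'$ is a morphism of graded $P$-algebras and that both differentials are $P$-derivations, the contributions from the internal differential of $P$ cancel and one is left with an $f'$-twisted Leibniz identity
\[
D\big(\theta(\mu;x_1,\dots,x_m)\big)=\sum_{i=1}^{m}\pm\,\theta_B\big(\mu;f'x_1,\dots,Dx_i,\dots,f'x_m\big),
\]
so that $D$ is determined by its restriction to the generating space $V\oplus V'$. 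On $V$ this restriction is $(f\circ d_A-d_B\circ f)|_V=0$, since $f$ is a dg morphism and the KS-differential restricts to $d_A$ on $A$; on $V'$, since the KS-differential restricts to $d:V'\to Z^{n+1}(A)\subseteq A$ and $f'|_A=f$, it is $f\circ d-d_B\circ\varphi$. Hence $f'$ is a morphism of dg $P$-algebras exactly when $d_B\varphi=fd$, which is the asserted condition; together with the bijection of the first step this gives the lemma.

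The one point that needs an actual argument — and the only place I expect any friction — is the claim that an $f'$-twisted derivation is determined by its values on $V\oplus V'$. This is the morphism version of the standard fact (already used implicitly in Definition~\ref{defKS}) that a $P$-derivation of a free $P$-algebra is determined on its generators: one notes that $\foa{V\oplus V'}$ is spanned by decomposables $\theta(\mu;x_1,\dots,x_m)$ with $x_i\in V\oplus V'$, including the arity-zero part $P(0)$ on which both $D$ and the twisted Leibniz formula give $0$, and then the displayed identity expresses $D$ on an arbitrary element through the $Dx_i$; no consistency issue arises since $D$ is literally a difference of two linear maps. Everything else is a routine unwinding of the universal property and of the definition of KS-extension, the only computation being the sign bookkeeping in the Leibniz rule.
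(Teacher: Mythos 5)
Your proof is correct and follows essentially the same route as the paper: apply the universal property of the free $P$-algebra to get the graded morphism $f'$ from $f|_V$ and $\varphi$, then verify compatibility with differentials only on generators, where the condition $d\varphi=fd$ does the work. The paper simply asserts that it suffices to check on $V'$, whereas you spell out the justification via the twisted-derivation argument; this is extra detail, not a different approach.
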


\begin{proof}
By the universal property of free algebras we get $f':\foa{V\oplus V'}\to B$.
To prove that it is compatible with the differentials of $A\sqcup_d\foa{V'}$ and $B$,
it suffices to check this on the restriction to $V'$. We have
$f'\circ d|_{V'}=f\circ d=d\circ \varphi=d\circ f'|_{V'}.$
\end{proof}

KS-extensions satisfy the lifting lemma with respect to surjective quasi-isomorphisms:

\begin{lemma}\label{lifting_extensions} Let $i:A \to A\sqcup_d \foa{V}$ be a KS-extension of degree $n$ and
\[
\xymatrix{
{A}   \ar[r]^{f}  \ar[d]_{i}     &     B \ar@{>>}[d]^{w}_{\wr} \\
{A\sqcup_d \foa{V}}  \ar[r]^{g}\ar@{-->}[ur]^{g'}  &     C
}
\]
a solid commutative diagram of $P$-algebra morphisms, where $w$ is a surjective quasi-isomorphism.
Then, there is a $P$-algebra morphism $g'$ making both triangles commute.
\end{lemma}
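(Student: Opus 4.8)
The plan is to reduce the lifting problem for the KS-extension to a lifting problem for the single vector space $V$ of generators, and then to solve that using the fact that $w$ is a surjective quasi-isomorphism. By Lemma~\ref{univ_morfisme}, giving a $P$-algebra morphism $g':A\sqcup_d\foa{V}\to B$ extending $f$ is the same as giving a degree-$0$ linear map $\varphi:V\to B$ with $d\varphi=fd|_V$; and this $g'$ will make the lower triangle commute, i.e. $wg'=g$, precisely when $w\varphi$ equals the composite $V\xrightarrow{d}A\sqcup_d\foa{V}\xrightarrow{g}C$ restricted to $V$, again by the uniqueness part of Lemma~\ref{univ_morfisme}. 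So the task becomes: given the map $\psi:=g|_V:V\to C$ (which satisfies $d\psi=g\circ d|_V = w f d|_V$ since $g$ is a chain map and $gi=wf$ ... more precisely $d\psi = g(d|_V)$, and $d|_V$ lands in $Z^{n+1}(A)$ so $\psi$ has values whose differential is $w(fd|_V)$), find $\varphi:V\to B$ of degree $0$ with $w\varphi=\psi$ and $d\varphi=fd|_V$.

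First I would record the compatibility data precisely. Write $z:=fd|_V:V\to Z^{n+1}(B)$ (note $fd|_V$ is a cocycle in $B$ because $d|_V$ has values in cocycles of $A$ and $f$ is a chain map). Then $wz = w f d|_V = g i d|_V = g (d|_V)$ — here I am using $gi=wf$ from commutativity of the square and $i d|_V$ is the inclusion of $d|_V$ — so $wz = d\psi$ as maps $V\to C$, meaning the pair $(z,\psi)$ together tells us: the cocycle $z$ in $B$ maps under $w$ to a coboundary $d\psi$ in $C$. Choosing a basis of $V$, this is a finite or infinite collection of such lifting problems, one per basis vector, all of the same shape, so I may as well argue for a single element: given $v\in V$ of degree $n$, a cocycle $z_v\in Z^{n+1}(B)$, and an element $\psi_v\in C^n$ with $d\psi_v = w(z_v)$, I must find $\varphi_v\in B^n$ with $w(\varphi_v)=\psi_v$ and $d\varphi_v = z_v$.

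The key step is the standard cocycle-lifting argument for a surjective quasi-isomorphism, which I would phrase via the acyclic cone $C(w)$, or directly. Since $w$ is surjective, pick any $b\in B^n$ with $w(b)=\psi_v$. Then $w(db - z_v) = d\psi_v - w(z_v) = 0$, so $db-z_v\in\Ker w$, and it is a cocycle in $\Ker w$ (as $d(db-z_v)=0$ since $z_v$ is a cocycle). Because $w$ is a surjective quasi-isomorphism, its kernel $\Ker w$ is acyclic (the short exact sequence $0\to\Ker w\to B\to C\to 0$ of complexes gives a long exact sequence in which $w^*$ is an isomorphism, forcing $H^*(\Ker w)=0$); hence there is $c\in(\Ker w)^{n-1}$ with $dc = db - z_v$. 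Set $\varphi_v:=b-dc$. Then $d\varphi_v = db - d(db-z_v)$; wait — $d(dc)=0$, so $d\varphi_v = db - 0 = db$, which is not right. Let me redo: $dc = db - z_v$ so $z_v = db - dc = d(b-c)$, and $w(b-c)=w(b)-w(c)=\psi_v-0=\psi_v$. So the correct choice is $\varphi_v:=b-c$, which satisfies $d\varphi_v = z_v$ and $w\varphi_v=\psi_v$, as required.

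Finally I would assemble these: define $\varphi:V\to B$ on the chosen basis by $v\mapsto\varphi_v$ and extend linearly; this is a degree-$0$ linear map with $d\varphi = fd|_V$ and $w\varphi = g|_V$. By Lemma~\ref{univ_morfisme} it extends $f$ to a $P$-algebra morphism $g':A\sqcup_d\foa{V}\to B$ with $g'i=f$, and by the uniqueness clause of that lemma applied to $wg'$ and $g$ (both extend $wf=gi$ along $i$, and both restrict on $V$ to the same map $wϕ=g|_V$) we get $wg'=g$, so both triangles commute. The only mildly delicate point is the purely formal bookkeeping that $wg'=g$ follows from agreement on generators plus agreement on $A$ — but that is exactly the uniqueness half of Lemma~\ref{univ_morfisme}, so there is no real obstacle; the mathematical content is entirely the acyclicity of $\Ker w$ and the elementary cocycle lift, which goes through verbatim because the underlying complexes of a $P$-algebra morphism are honest chain complexes and $w$ surjective quasi-iso is a condition only on those underlying complexes.
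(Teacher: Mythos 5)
Your argument is correct and is essentially the paper's own proof in unpacked form: the paper encodes your two conditions $d\varphi=fd|_V$, $w\varphi=g|_V$ as a lift of $(f\circ d,\,g|_V):V\to Z^n(C(w))$ through the surjection $Z^n(C(1_B))\to Z^n(C(w))$ induced by $1\oplus w$ (your explicit cocycle-lifting via acyclicity of $\Ker w$ is exactly the proof of that surjectivity), and then applies Lemma~\ref{univ_morfisme} just as you do. Only a trivial degree slip: the primitive $c$ of the cocycle $db-z_v\in(\Ker w)^{n+1}$ lies in $(\Ker w)^{n}$, not $(\Ker w)^{n-1}$, which is what your formula $\varphi_v=b-c\in B^n$ already requires.
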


\begin{proof}
Consider the solid diagram of $\mk$-vector spaces
\[
\xymatrix{
&&Z^n(C(1_B))\ar@{->>}[d]^{1\oplus w}\\
V\ar@{.>}[urr]^\mu\ar[rr]^{\lambda}&&Z^n(C(w))&.
}
\]
where $\lambda = (f\circ d, g_{| V})$.
Since $w$ is a surjective quasi-isomorphism, this is well defined and $1\oplus w$ is surjective.
Therefore there exists a dotted arrow $\mu=(\alpha,\beta)$ making the diagram commute.
It is straightforward to see that the image of the linear map
$
(d,\beta) : V \to A^{n+1} \oplus B^n
$
is included in $Z^n(C(f))$.
According to the universal property of KS-extensions of Lemma $\ref{univ_morfisme}$,
we may obtain $g'$ as the morphism induced by $g|_A$ together with  $\beta:V\to B^n$.
\end{proof}

\begin{definition}
A \textit{Sullivan $P$-algebra} is the colimit $\M=\cup_{i\geq 0}\M[i]$
of a sequence
\[
\M[0]=P(0)\to \M[1] = \foa{V[1]}\to \M[2]=\M[1]\sqcup_d\foa{V[2]}\to\cdots
\]
of KS-extensions of non-negative degrees, starting from $P(0)=\foa{0}$.
\end{definition}

\begin{proposition}\label{strict_lifting}
Let $C$ be a Sullivan $P$-algebra. For every solid diagram of $P$-algebras
\[
\xymatrix{
&A\ar@{->>}[d]^{w}_{\wr}\\
\ar@{.>}^g[ur]C\ar[r]^f&B&
}
\]
in which $w$ is a surjective quasi-isomorphism, there exists $g$ making the diagram commute.
\end{proposition}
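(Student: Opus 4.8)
The plan is to reduce the statement to a transfinite (here countable) induction over the KS-tower defining the Sullivan $P$-algebra $C = \cup_{i\geq 0}\M[i]$, and at each stage to invoke the lifting lemma for a single KS-extension, Lemma \ref{lifting_extensions}. So first I would set $C[i] = \M[i]$ and note that $C[0] = P(0)$ is initial in $\Alg_P$, so there is a unique morphism $g_0 : C[0] \to A$; since $w$ is a $P$-algebra morphism and $P(0)$ is initial, the triangle with $f|_{C[0]}$ automatically commutes (both composites $w g_0$ and $f|_{C[0]}$ are the unique morphism $P(0)\to B$).

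Next, the inductive step: suppose $g_i : C[i] \to A$ has been constructed with $w g_i = f|_{C[i]}$. By definition $C[i+1] = C[i] \sqcup_d \foa{V[i+1]}$ is a KS-extension of $C[i]$ of some non-negative degree $n$, so we have a solid square
\[
\xymatrix{
C[i] \ar[r]^{g_i} \ar[d] & A \ar@{>>}[d]^{w}_{\wr} \\
C[i+1] \ar[r]^{f|_{C[i+1]}} & B
}
\]
which commutes because $w g_i = f|_{C[i]}$ and $f|_{C[i+1]}$ restricts to $f|_{C[i]}$ on $C[i]$. Lemma \ref{lifting_extensions} (applied with $A \mapsto C[i]$, $B \mapsto A$, $C \mapsto B$, $w \mapsto w$) then yields $g_{i+1} : C[i+1] \to A$ with $g_{i+1}|_{C[i]} = g_i$ and $w g_{i+1} = f|_{C[i+1]}$. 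One subtlety to check here is the degree-$0$ case $n=0$: Definition \ref{defKS} requires $n>0$, while the Sullivan tower allows KS-extensions of ``non-negative'' degree, starting with $\M[1] = \foa{V[1]}$. I would handle the initial extension $\M[0] \to \M[1]$ either as a degenerate instance of the same argument — noting that $\foa{V[1]}$ with zero differential on $V[1]$ is free on $V[1]$, so a lift is determined by any linear map $V[1] \to ZA$ lifting $f|_{V[1]}$ composed appropriately, which exists because $w$ is surjective on cocycles (a surjective quasi-isomorphism is onto $Z^*$) — or simply remark that the proof of Lemma \ref{lifting_extensions} goes through verbatim when $n = 0$ with $d = 0$.

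Finally, I would assemble the $g_i$ into a morphism on the colimit: the construction gives a compatible family $g_i : C[i] \to A$ with $g_{i+1}|_{C[i]} = g_i$, hence by the universal property of the colimit $C = \cup_i C[i]$ in $\Alg_P$ there is a unique $g : C \to A$ with $g|_{C[i]} = g_i$ for all $i$. Since $w g|_{C[i]} = w g_i = f|_{C[i]}$ for every $i$, and morphisms out of a colimit are determined by their restrictions, we get $w g = f$, as required.

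The argument is essentially formal once Lemma \ref{lifting_extensions} is in hand, so I do not expect a serious obstacle; the only point needing a little care is the bookkeeping at the very bottom of the tower (the passage $P(0) \to \foa{V[1]}$ and the $n=0$ versus $n>0$ discrepancy between Definition \ref{defKS} and the definition of a Sullivan $P$-algebra). If one prefers to keep the induction uniform, one can instead phrase the whole tower as a composite of KS-extensions indexed so that all intermediate steps have strictly positive degree and treat $\foa{V[1]}$ as the first genuine extension, lifting along it by the surjectivity of $w$ on cocycles; this is the step I would write out most carefully.
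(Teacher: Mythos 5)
Your argument is exactly the paper's proof: an induction over the KS-tower of $C$, lifting across each KS-extension via Lemma \ref{lifting_extensions} and passing to the colimit. The extra care you take with the base case $P(0)$, the assembly over the colimit, and the degree-$0$ discrepancy with Definition \ref{defKS} is sound bookkeeping that the paper leaves implicit, but it does not change the approach.
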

\begin{proof}
Assume that $C'\to C=C'\sqcup_d\foa{V}$ is a KS-extension of degree $n$,
and that we have constructed $g':C'\to A$ such that $wg'=f'$, where $f'$ denotes the restriction of $f$ to $f'$.
The existence of $g$ extending $g'$ now follows from Lemma  $\ref{lifting_extensions}$.
\end{proof}

\begin{remark} The above proposition says that Sullivan $P$-algebras 
are cofibrant objects in the Quillen model structure of the category of $P$-algebras of \cite{Hin97}.
In fact, Sullivan $P$-algebras correspond to the \textit{standard cofibrations} of Hinich.
\end{remark}

The following are standard consequences of
Proposition $\ref{strict_lifting}$.
The proofs are straightforward adaptations of the
analogous results in the setting of $\Com$-algebras
(see Section 11.3 of \cite{GM}, see also Section 2.3 of \cite{Cirici} for proofs
in the abstract setting of categories with a functorial path).

Denote by $\mk [t,dt]$ the $\Com$-algebra with a generator $t$ in degree zero,
a generator $dt$ in degree one, and $d(t) = dt$. We have the unit $\iota $ and evaluations $\delta^0$ and $\delta^1$ at $t=0$ and $t=1$ respectively,
which are morphisms of $\Com$-algebras satisfying $\delta^0 \circ \iota=\delta^1 \circ \iota=1$.

\begin{definition}\label{path}
A \textit{functorial path} in the category of $P$-algebras is defined as the functor
 \[-[t,dt]:\Alg_P\longrightarrow \Alg_P\] given on objects
 by $A[t,dt]=A\otimes \mk[t,dt]$ and on morphisms by $f[t,dt]=f\otimes\mk[t,dt]$, together with the natural transformations
\[\xymatrix{A\ar[r]^-{\iota}&A[t,dt] \ar@<.6ex>[r]^{\delta^1} \ar@<-.6ex>[r]_{\delta^0}&A}\,\,\,;\,\,\, \delta^k\circ \iota=1\]
 given by $\delta^k=1\otimes \delta^k:A[t,dt]\to A\otimes\mk=A$ and $\iota=1\otimes\iota:A=A\otimes\mk\to A[t,dt]$.
\end{definition}
Note that the map $\iota$ is a quasi-isomorphism of $P$-algebras while the maps $\delta^0$ and $\delta^1$ are
surjective quasi-isomorphisms of $P$-algebras.

The functorial path gives a natural notion of homotopy between morphisms of $P$-algebras:

\begin{definition}
Let $f,g:A\to B$ be two morphisms of $P$-algebras. An \textit{homotopy from $f$ to $g$} is given by a morphism
of $P$-algebras $h:A\to B[t,dt]$ such that $\delta^0\circ h=f$ and $\delta^1\circ h=g$.
We use the notation $h:f\simeq g$.
\end{definition}

The homotopy relation defined by a functorial path is reflexive and compatible with the composition
(see for example~\cite[ Lemma I.2.3]{KampsPorter}.
Furthermore, the symmetry of $\Com$-algebras $\mk[t,dt]\lra \mk[t,dt]$ given by $t\mapsto 1-t$ makes
the homotopy relation into a symmetric relation. However, the homotopy relation is not transitive in general.
As in the rational homotopy setting of $\Com$-algebras, we have:

\begin{proposition}\label{transitive}
The homotopy relation between morphisms of $P$-algebras is an equivalence relation for those
morphisms whose source is a Sullivan $P$-algebra.
\end{proposition}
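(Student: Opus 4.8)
The plan is to show that the homotopy relation, which we already know to be reflexive and symmetric, is transitive when the source is a Sullivan $P$-algebra. So suppose $C$ is a Sullivan $P$-algebra, $f,g,k:C\to B$ are morphisms of $P$-algebras, and we are given homotopies $h:f\simeq g$ and $h':g\simeq k$, i.e. morphisms $h,h':C\to B[t,dt]$ with $\delta^0 h=f$, $\delta^1 h=g$, $\delta^0 h'=g$, $\delta^1 h'=k$. The standard strategy, which works verbatim as in the $\Com$-algebra case, is to glue $h$ and $h'$ along the value $g$ and then ``reparametrize'' using a lifting against a surjective quasi-isomorphism.

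First I would form the $P$-algebra $B[t_0,dt_0,t_1,dt_1]:=B\otimes\mk[t,dt]\otimes\mk[t,dt]$, which is again a $P$-algebra by the tensor product construction of Section~\ref{functorial}, using that $\mk[t,dt]\otimes\mk[t,dt]$ is a $\Com$-algebra. The pair $(h,h')$ together with the compatibility $\delta^1 h=g=\delta^0 h'$ assembles, via a pullback/equalizer description of ``two paths meeting at a point'', into a single morphism $H:C\to E$, where $E$ is the sub-$P$-algebra of $B[t_0,dt_0,t_1,dt_1]$ (or a suitable pullback $B[t,dt]\times_B B[t,dt]$) of pairs of paths sharing the midpoint value. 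Concretely one checks $E$ receives an evaluation $E\to B$ at the ``first endpoint'' ($\delta^0$ on the first factor) and another $E\to B$ at the ``last endpoint'' ($\delta^1$ on the second factor). The key point is then that the natural map $B[s,ds]\to E$ induced by the two halves of the interval $[0,1]$, $s\mapsto(2s,\text{const})$ on $[0,1/2]$ and $s\mapsto(\text{const},2s-1)$ on $[1/2,1]$ — more precisely the algebra map $\mk[t_0,dt_0]\otimes\mk[t_1,dt_1]\to\mk[s,ds]$ realizing this subdivision — is a \emph{surjective quasi-isomorphism} of $P$-algebras: surjectivity is a direct check on generators, and the quasi-isomorphism statement reduces to the corresponding elementary fact for the underlying complexes (both $B[s,ds]$ and $E$ are quasi-isomorphic to $B$ via evaluation). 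Here one invokes, as in \cite{GM}, Section 11.3, the standard computation that these ``subdivision of the interval'' maps are quasi-isomorphisms.

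Having produced a surjective quasi-isomorphism $w:B[s,ds]\twoheadrightarrow E$ and the morphism $H:C\to E$, I would apply Proposition~\ref{strict_lifting} (the strict lifting property for Sullivan $P$-algebras against surjective quasi-isomorphisms) to the diagram with $C$ at the bottom-left, $B[s,ds]$ at the top, $E$ at the bottom-right, $f$ replaced by $H$, and $w$ the vertical map. This yields $\widetilde{H}:C\to B[s,ds]$ with $w\widetilde{H}=H$. Composing with the two endpoint evaluations $\delta^0,\delta^1:B[s,ds]\to B$ and using the compatibility of $w$ with the endpoint evaluations on $E$ (the first-endpoint evaluation on $E$ pulls back to $\delta^0$ on $B[s,ds]$, the last-endpoint to $\delta^1$), one gets $\delta^0\widetilde{H}=f$ and $\delta^1\widetilde{H}=k$, so $\widetilde{H}:f\simeq k$ is the desired homotopy.

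The main obstacle, and the only place any real content enters, is the construction of $E$ together with the surjective quasi-isomorphism $w:B[s,ds]\to E$: one must pin down the right model for ``pair of paths with common midpoint'' as a genuine $P$-algebra (so that one stays inside $\Alg_P$ rather than working with the problematic general tensor product of $P$-algebras — this is exactly why the tensor-with-$\Com$-algebra bifunctor of Section~\ref{functorial} is used), and verify that the subdivision map is surjective and a quasi-isomorphism at the level of underlying complexes. Since the underlying complexes are unchanged by $F^*$-type restriction and the $\Com$-algebra computations $\mk[t,dt]$, $\mk[s,ds]$, $E$ are all classical, this reduces entirely to the rational homotopy facts in \cite[Section~11.3]{GM}; the transfer to the $P$-algebra setting is then formal, using only Proposition~\ref{strict_lifting}. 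I would therefore present this as a short proof delegating the interval-subdivision computation to the reference and emphasizing the use of the strict lifting property.
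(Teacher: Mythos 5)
Your overall scaffolding (glue $h,h'$ into a map to the fiber product $E=B[t,dt]\times_B B[t,dt]$ of paths sharing the middle value, then lift against a surjective quasi-isomorphism using Proposition \ref{strict_lifting} and read off the endpoints) matches the paper's strategy, but the key step of your argument fails: there is no surjective quasi-isomorphism $w:B[s,ds]\to E$ realizing the subdivision of the interval. The subdivision map of polynomial forms goes $\mk[s,ds]\to \mk[t_0,dt_0]\times_{\mk}\mk[t_1,dt_1]$, $P(s)\mapsto \bigl(P(t_0/2),P((t_1+1)/2)\bigr)$, and it is injective but far from surjective: its image consists only of pairs in which the second path is the ``analytic continuation'' of the first (e.g.\ the pair $(t_0,1)$ lies in the fiber product but not in the image). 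This reflects the familiar fact that the concatenation of two polynomial paths is only piecewise polynomial. Worse, no surjection of this shape can exist at all: any cdga map out of $\mk[s,ds]$ is determined by the image of $s$, so its degree-zero image is generated by a single element, while the degree-zero part of $\mk[t,dt]\times_{\mk}\mk[t,dt]$ is the coordinate ring of two affine lines glued at a point (isomorphic to $\mk[x,y]/(xy)$), which is not monogenic. So ``surjectivity is a direct check on generators'' is not correct, and Proposition \ref{strict_lifting} cannot be invoked against your $w$. (You also state the underlying algebra map in the wrong direction: a map $\mk[t_0,dt_0]\otimes\mk[t_1,dt_1]\to\mk[s,ds]$ induces $B[t,dt,s,ds]\to B[s,ds]$, not $B[s,ds]\to E$.)

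The paper's proof repairs exactly this point by lifting into the \emph{square} rather than the interval: the map $\pi:A[t,dt,s,ds]\to \M=A[s,ds]\times_A A[t,dt]$ induced by $\delta^0_t$ and $\delta^1_s$ \emph{is} surjective (an explicit preimage of $(a(s,ds),b(t,dt))$ is $a(s,ds)+b(st,dt)-b(0,0)$) and a quasi-isomorphism, so Proposition \ref{strict_lifting} yields a lift $g:C\to A[t,dt,s,ds]$ of $(h,h')$; the concatenated homotopy is then obtained by composing with the fold map $\nabla:A[t,dt,s,ds]\to A[t,dt]$, $t,s\mapsto t$. So your endpoint bookkeeping at the end would be fine, but the surjective quasi-isomorphism you need must come from the double path object and a folding map, not from a subdivision map $B[s,ds]\to E$, which does not exist.
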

\begin{proof}
It only remains to prove transitivity.
Let $C$ be a Sullivan $P$-algebra and consider morphisms $f,f',f'':C\to A$
together with homotopies $h:f\simeq f'$ and $h':f'\simeq f''$.
Consider the pull-back diagram in the category of $P$-algebras
\[
\xymatrix{
A[t,dt,s,ds] \ar@/_/[ddr]_{\delta^0_t} \ar@/^/[drr]^{\delta^1_s} \ar@{.>}[dr]|-{\pi}\\
& \pb \M\ar[d]\ar[r] & A[t,dt] \ar[d]^{\delta^0_t}\\
& A[s,ds] \ar[r]_{\delta^1_s} & A
}
\]
To see that the map $\pi$ is surjective,
note that if $a(s,ds)$ and $b(t,dt)$ are polynomials such that $a(1,0)=b(0,0)$,
representing an element in $\M$, then
\[\pi(a(s,ds)+b(st,dt)-b(0,0))=(a(s,ds), b(t,dt)).\]
It is straightforward to see that all the $P$-algebras in the above diagram are quasi-isomorphic and that $\pi$ is a quasi-isomorphism.
Consider the solid diagram
\[
\xymatrix{
&&A[t,dt,s,ds]\ar@{->>}[d]^{\pi}_{\wr}\\
C\ar@{.>}[rru]^g\ar[rr]_{(h,h')}&&\M&.
}
\]
Then by Proposition $\ref{strict_lifting}$,
there exists a dotted arrow $g$
such that $\pi g=(h,h')$.
Let $h\widetilde{+}h':=\nabla g$,
where $\nabla:A[t,dt,s,ds]\to A[t,dt]$
is the map given by $t,s\mapsto t$.
This gives the desired homotopy
$h\widetilde{+}h':f\simeq f''$.
\end{proof}

Denote by $[A,B]$ the set of homotopy classes of morphisms of $P$-algebras $f:A\to B$.

\begin{proposition}\label{bijecciohomotopies}Let $C$ be a Sullivan $P$-algebra. Any quasi-isomorphism
$w:A\to B$ of $P$-algebras induces a bijection
$w_*:[C,A]\to [C,B]$.
\end{proposition}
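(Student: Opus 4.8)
The plan is to prove that $w_*$ is both surjective and injective by exploiting the lifting and homotopy machinery already established for Sullivan $P$-algebras. The key technical tool is that $w:A\to B$, being a quasi-isomorphism but not necessarily surjective, can be replaced by a surjective quasi-isomorphism without changing its homotopy behaviour on maps out of a Sullivan algebra. Concretely, I would factor $w$ through the path object of $B$: set $\widetilde{A}$ to be the pullback of $w:A\to B$ along $\delta^0:B[t,dt]\to B$, so that we obtain a commutative square with a projection $p:\widetilde{A}\to A$ that is a quasi-isomorphism admitting a section (since $\iota$ is a quasi-isomorphism and $\delta^0\iota=1$), together with a map $\widetilde{w}:\widetilde{A}\to B$ given by composing the other projection with $\delta^1$, which is a \emph{surjective} quasi-isomorphism because $\delta^1$ is surjective. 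One checks $\widetilde{w}$ is homotopic, over nothing, to $w\circ p$, and that $p$ induces a bijection $[C,\widetilde{A}]\to[C,A]$ by the section together with a homotopy argument. It then suffices to prove the statement for the surjective quasi-isomorphism $\widetilde{w}$.

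For surjectivity of $\widetilde{w}_*:[C,\widetilde A]\to[C,B]$: given a morphism $f:C\to B$, apply Proposition~\ref{strict_lifting} to the solid diagram with $w=\widetilde w$ surjective quasi-isomorphism and bottom edge $f$; this produces $g:C\to\widetilde A$ with $\widetilde w g=f$, hence $\widetilde w_*[g]=[f]$.

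For injectivity: suppose $g_0,g_1:C\to\widetilde A$ satisfy $\widetilde w g_0\simeq \widetilde w g_1$ via a homotopy $H:C\to B[t,dt]$. I want to lift $H$ to a homotopy between $g_0$ and $g_1$. Form the $P$-algebra $E$ as the pullback of $\widetilde w[t,dt]:\widetilde A[t,dt]\to B[t,dt]$ along $H$; equivalently, consider the map
\[
\widetilde A\times_B \widetilde A[t,dt]\times_B \widetilde A \longrightarrow B\times B
\]
cut out by the two endpoint evaluations, and observe that the relevant evaluation map $\widetilde A[t,dt]\to \widetilde A\times_{B[t,dt]}\bigl(B[t,dt]\bigr)$, fitting $(g_0,g_1)$ on the two ends, is a surjective quasi-isomorphism — surjectivity because $\widetilde w$ is surjective and $\delta^0,\delta^1$ are surjective, and quasi-isomorphism because $B[t,dt]\to B$ and $\widetilde A[t,dt]\to\widetilde A$ are. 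Then Proposition~\ref{strict_lifting} again, applied with source the Sullivan algebra $C$, yields a lift $C\to \widetilde A[t,dt]$ whose endpoints are $g_0$ and $g_1$ and whose composite with $\widetilde w[t,dt]$ is $H$; in particular $g_0\simeq g_1$, so $[g_0]=[g_1]$ in $[C,\widetilde A]$.

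The main obstacle is bookkeeping the pullback $P$-algebras and verifying the surjectivity and quasi-isomorphism claims for the endpoint-evaluation maps: these are the precise analogues of the standard fact in $\Com$-algebras that $B[t,dt]\to B\times_{?}B$ type maps are surjective quasi-isomorphisms, and the argument is the same polynomial-splicing computation used in the proof of Proposition~\ref{transitive} (the explicit formula $a(s,ds)+b(st,dt)-b(0,0)$ there is exactly the model). Since the category of $P$-algebras has all limits and the forgetful functor to cochain complexes preserves them, the quasi-isomorphism checks reduce to the underlying complexes, where they are classical. Once these surjective-quasi-isomorphism lemmas are in place, both halves of the bijection follow formally from Proposition~\ref{strict_lifting} exactly as in Sections~11.3 of~\cite{GM}.
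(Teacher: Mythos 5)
Your argument is correct in substance, and it uses the same basic objects as the paper — your $\widetilde A$ is exactly the paper's mapping path $\M(w)=A\times_B B[t,dt]$, with the same section $j$ and the same surjective quasi-isomorphism $\widetilde w=\delta^1\pi_2$ — but it is organized along a genuinely different route. You reduce once and for all to the case of a surjective quasi-isomorphism and then prove both halves by strict lifting (Proposition \ref{strict_lifting}), the injectivity half via the endpoint-evaluation map $\widetilde A[t,dt]\to(\widetilde A\times\widetilde A)\times_{B\times B}B[t,dt]$, which is a surjective quasi-isomorphism precisely because $\widetilde w$ is surjective. The paper never reduces to the surjective case: it proves surjectivity of $w_*$ directly from $\M(w)$ (the lift $g'$ gives $g=pg'$ and the tautological homotopy $\pi_2 g'\colon wg\simeq f$), and for injectivity it uses the map $\overline w\colon A[t,dt]\to\M(w,w)$, which is only a quasi-isomorphism since $w$ need not be surjective; instead of a strict lift it invokes the surjectivity statement just proved to get $\overline w G\simeq H$ up to homotopy, and then needs transitivity (Proposition \ref{transitive}) to conclude $f_0\simeq f_1$. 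So the paper trades your verification that the endpoint-evaluation map is a surjective quasi-isomorphism for an up-to-homotopy lifting plus transitivity; your version is the standard path-object argument and avoids transitivity, at the cost of that verification and of the replacement step.

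Two points to tighten. First, the claim that $p_*\colon[C,\widetilde A]\to[C,A]$ is a bijection is left vague (``the section together with a homotopy argument''); injectivity of $p_*$ is not formal from the section alone — you would either construct the reparametrization homotopy $jp\simeq\id_{\widetilde A}$ (via $t\mapsto ts$), or observe that $p=\pi_1$ is itself a \emph{surjective} quasi-isomorphism and apply your surjective case to it. In fact you can avoid $p_*$ altogether: for surjectivity, $wg=wpg'\simeq\widetilde w g'=f$; for injectivity, set $g_i=jf_i$, note $\widetilde w j=w$, apply injectivity for $\widetilde w$, and compose with $p$. Second, the displayed pullback in your injectivity step is garbled; the object you want is $(\widetilde A\times\widetilde A)\times_{B\times B}B[t,dt]$, and its surjectivity requires the small splicing computation with a linear section of $\widetilde w$ and correction terms $(1-t)k_0+tk_1$ in $\ker\widetilde w$, not just the surjectivity of $\widetilde w$ and $\delta^0,\delta^1$ in isolation. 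With these repairs your proof goes through.
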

\begin{proof}
 We first prove surjectivity. Consider the mapping path of $w$, given by the pull-back
\[
\xymatrix{\pb\ar[d]_{\pi_1}\M(w)\ar[r]^{\pi_2}&B[t,dt]\ar[d]^{\delta^0}\\
A\ar[r]^w&B&.
}
\]
Define maps $p:=\pi_1:\M(w)\to A$, $q:=\delta^1\pi_2:\M(w)\to B$ and $j:=(1,\iota w):A\to \M(w)$.
We obtain a
solid diagram
\[
\xymatrix{
&A\ar@<.6ex>[d]^{j}\ar@/^2pc/[dd]^{w}\\
&\M(w)\ar@<.6ex>[u]^{p}\ar@{->>}[d]^{q}_{\wr}\\
C\ar@{.>}[ur]^{g'}\ar[r]^{f}&B&,
}
\]
where $q$ is a surjective quasi-isomorphism and
$q j=w$.
By Proposition $\ref{strict_lifting}$,
there exists $g'$ such that $q g'=f$. Let $g:=p g'$.
Then we have 
$f=qg'=\delta^1 \pi_2g'$ and $wg=w\pi_1g'=\delta^0\pi_2g'$.
Therefore $[wg]=[f]$ and $w_*$ is surjective.

To prove injectivity, let $f_0,f_1:C\to A$ be such that $h:wf_0\simeq wf_1$.
Consider the pull-back diagram
\[
\xymatrix{
A[t,dt] \ar@/_/[ddr]_{(\delta^0,\delta^1)} \ar@/^/[drr]^{w[t,dt]} \ar@{.>}[dr]|-{\overline{w}}\\
& \pb\M(w,w)\ar[d]\ar[r] & B[t,dt] \ar[d]^{(\delta^0,\delta^1)}\\
& A\times A \ar[r]_{w\times w} & B\times B
}.
\]
One may verify that $\overline{w}$ is a quasi-isomorphism.
Let $H=(f_0,f_1,h)$ and consider the solid diagram
\[\xymatrix{
&&A[t,dt]\ar[d]^{\overline{w}}_{\wr}\\
C\ar@{.>}[urr]^{G}\ar[rr]^-{H}&&\M(w,w)&.
}\]
Since $\overline{w}_*$ is surjective,
there exists a dotted arrow $G$ such that $\overline{w}G\simeq H$.
It follows that
$f_0\simeq \delta^0G\simeq \delta^1 G\simeq f_1$. Then $f_0\simeq f_1$ by Proposition $\ref{transitive}$.
\end{proof}

\section{Sullivan minimal models}\label{SecMinimals}
In this section, we prove the existence of Sullivan
minimal models of $P$-algebras, for a quite wide family of operads in the category of cochain complexes of $\mk$-vector spaces.

We first introduce the notion of $r$-tame operad. For this class of operads, $r$-connected $P$-algebras will have Sullivan minimal models.

\begin{definition}\label{defrtame}

Let $r\geq 0$ be an integer. An operad $P\in \Op$ is called \textit{$r$-tame} if for all $n\geq 2$,
\[
P(n)^q=0\text{ for all } q\leq (1-n)(1+r) \ .
\]
\end{definition}

Note that $r$-tame implies $(r+1)$-tame for all $r\geq 0$.
Below we represent the condition for being an $r$-tame operad, for $r=0$ and $r=1$.
Elements of $r$-tame operads are allowed to be non-zero in the arity-degree
range determined by the blank squares below, except for
the identity $\id \in P(1)=\mk$, and $P(0)\in\{0,\mk\}$ which are denoted by $*$ and live in arity-degree
$(1,0)$ and $(0,0)$ respectively.

\begin{table}[h]
\begin{tabular}{ r }
\tiny{degree}\\
\tiny{$\vdots$}\\
\tiny{$1$}\\
\tiny{$0$}\\
\tiny{$-1$}\\
\tiny{$-2$}\\
\tiny{$-3$}\\
\tiny{$-4$}\\
\tiny{$-5$}\\
\tiny{$-6$}\\
\tiny{$-7$}\\
\\\\
\end{tabular}
\begin{tabular}{|| c | c | c |c | c | c |c | c | c | c}
\grisf &\grisf&&&&&&&&\reflectbox{$\ddots$}\\\hline
\grisf &\grisf&&&&&&&\\\hline 
\tiny{$\ast$}&\tiny{$\ast$}&&&&&&&\\\hline 
\grisf &\grisf &\grisf &&&&&&\\\hline 
\grisf &\grisf &\grisf &\grisf &&&&&\\\hline 
\grisf &\grisf &\grisf &\grisf &\grisf &&&&\\\hline 
\grisf &\grisf &\grisf &\grisf &\grisf &\grisf &&&\\\hline 
\grisf &\grisf &\grisf &\grisf &\grisf &\grisf &\grisf &&\\\hline 
\grisf &\grisf &\grisf &\grisf &\grisf &\grisf &\grisf &\grisf&\\\hline 
\grisf &\grisf &\grisf &\grisf &\grisf &\grisf &\grisf &\grisf&\grisf\\\hline 
\multicolumn{1}{c}{\text{\tiny{$0$}}}&
\multicolumn{1}{c}{\text{\tiny{$1$}}}&
\multicolumn{1}{c}{\text{\tiny{$2$}}}&
\multicolumn{1}{c}{\text{\tiny{$3$}}}&
\multicolumn{1}{c}{\text{\tiny{$4$}}}&
\multicolumn{1}{c}{\text{\tiny{$5$}}}&
\multicolumn{1}{c}{\text{\tiny{$6$}}}&
\multicolumn{1}{c}{\text{\tiny{$7$}}}&
\multicolumn{1}{c}{\text{\tiny{$8$}}}&
\multicolumn{1}{c}{\text{\tiny{$\cdots$}}}
\end{tabular}
\begin{tabular}{ r }
\\\\\\\\\\\\\\\\\\\\\\\\
\tiny{arity}\\\\
\end{tabular}
\,\,\,
\begin{tabular}{ r }
\tiny{degree}\\
\tiny{$\vdots$}\\
\tiny{$1$}\\
\tiny{$0$}\\
\tiny{$-1$}\\
\tiny{$-2$}\\
\tiny{$-3$}\\
\tiny{$-4$}\\
\tiny{$-5$}\\
\tiny{$-6$}\\
\tiny{$-7$}\\
\\\\
\end{tabular}
\begin{tabular}{|| c | c | c |c | c | c |c | c | c | c}
\grisf &\grisf&&&&&&&&\reflectbox{$\ddots$}\\\hline
\grisf &\grisf&&&&&&&\\\hline 
\tiny{$\ast$}&\tiny{$\ast$}&&&&&&&\\\hline 
\grisf &\grisf & &&&&&&\\\hline 
\grisf &\grisf &\grisf & &&&&&\\\hline 
\grisf &\grisf &\grisf & & &&&&\\\hline 
\grisf &\grisf &\grisf &\grisf & &&&&\\\hline 
\grisf &\grisf &\grisf &\grisf & & & &&\\\hline 
\grisf &\grisf &\grisf &\grisf &\grisf & & &&\\\hline 
\grisf &\grisf &\grisf &\grisf &\grisf & & &&\\\hline 
\multicolumn{1}{c}{\text{\tiny{$0$}}}&
\multicolumn{1}{c}{\text{\tiny{$1$}}}&
\multicolumn{1}{c}{\text{\tiny{$2$}}}&
\multicolumn{1}{c}{\text{\tiny{$3$}}}&
\multicolumn{1}{c}{\text{\tiny{$4$}}}&
\multicolumn{1}{c}{\text{\tiny{$5$}}}&
\multicolumn{1}{c}{\text{\tiny{$6$}}}&
\multicolumn{1}{c}{\text{\tiny{$7$}}}&
\multicolumn{1}{c}{\text{\tiny{$8$}}}&
\multicolumn{1}{c}{\text{\tiny{$\cdots$}}}
\end{tabular}
\begin{tabular}{ r }
\\\\\\\\\\\\\\\\\\\\\\\\
\tiny{arity}\\\\
\end{tabular}
\caption*{$0$-tame operads \hspace{4.5cm} $1$-tame operads}
\end{table}

\begin{definition}\label{defSullmin}Let $r\geq 0$ be an integer. A \textit{Sullivan $r$-minimal $P$-algebra}
is the colimit $\M=\cup_{i\geq 0}\M[i]$ of a sequence of KS-extensions starting from $P(0)$, ordered by non-decreasing degrees bigger than $r$:
\[
\M[0]=P(0)\to \M[1] = \foa{V[1]}\to \M[2]=\M[1]\sqcup_d\foa{V[2]}\to\cdots
\]
with $r<\deg(V[n])\leq \deg(V[n+1])$ for all $n\geq 1$. A \emph{Sullivan $r$-minimal model}
for a $P$-algebra $A$ is a Sullivan $r$-minimal $P$-algebra $\M$ together with a quasi-isomorphism $f: \M \to A$.
\end{definition}

As in the rational homotopy setting, to prove the existence of Sullivan minimal models we will restrict to the case when
our $P$-algebras are cohomologically connected (which we will call \emph{connected} for short from now on).

\begin{definition}
A $P$-algebra $A$ is called \textit{$0$-connected}
if $H^i(A)=0$ for all $i<0$ and the unit map $\eta:P(0)\to A$ induces an isomorphism $P(0)\cong H^0(A)$.
Let $r\geq 0$. Then $A$ is called \textit{$r$-connected} if, in addition, $H^1(A)=\cdots =H^r(A)=0$.
\end{definition}

For the construction of Sullivan minimal models
we will use the following two lemmas. The first of these lemmas ensures that free $P$-algebras
generated by positively-graded vector spaces, are positively-graded when $P$ is tame.

\begin{lemma}\label{positives} Let $V=\bigoplus_{i>r}V^i$ be a graded vector space with degrees $>r$.
If $P$ is $r$-tame then $\foa{V}^0=P(0)$ and
$\foa{V}^{k}=0$ for all $k\leq r$ with $k\neq 0$. In particular, $\foa{V}$ is $r$-connected.
\end{lemma}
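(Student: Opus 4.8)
The plan is to analyze the arity-degree bookkeeping in the explicit formula for the free $P$-algebra, $\foa{V} = \bigoplus_{m\geq 0} P(m)\otimes_{\Sigma_m} V^{\otimes m}$, and show that under the $r$-tameness hypothesis the summand with $m\geq 2$ contributes nothing in degrees $\leq r$. The arity $m=0$ summand is $P(0)$, concentrated in degree $0$; the arity $m=1$ summand is $P(1)\otimes V = \mk\otimes V = V$, which lives in degrees $>r$ by hypothesis. So the only thing to control is the arities $m\geq 2$.

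First I would fix $m\geq 2$ and compute the degrees occurring in $P(m)\otimes_{\Sigma_m} V^{\otimes m}$. An element of this space is (a class of) a tensor $p\otimes v_1\otimes\cdots\otimes v_m$ with $\deg(p) = q$ and each $\deg(v_j) = i_j > r$, so its total degree is $q + i_1 + \cdots + i_m \geq q + m(r+1)$, using $i_j \geq r+1$ for each $j$. By $r$-tameness, $P(m)^q = 0$ whenever $q \leq (1-m)(1+r)$, so the only nonzero contributions have $q > (1-m)(1+r) = (1-m)(r+1)$, i.e. $q \geq (1-m)(r+1)+1$. Combining, the total degree of any nonzero homogeneous element in the arity-$m$ summand is at least
\[
(1-m)(r+1) + 1 + m(r+1) = (r+1) + 1 = r+2 \ .
\]
Hence $\bigl(P(m)\otimes_{\Sigma_m} V^{\otimes m}\bigr)^k = 0$ for all $k \leq r+1$, in particular for all $k\leq r$, when $m\geq 2$. (Here I am using only that $V$ is concentrated in degrees $>r$; the integrality of the degrees is what turns the strict inequality $q > (1-m)(r+1)$ into $q\geq(1-m)(r+1)+1$.)

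Assembling the three ranges of arities: in degree $k\leq r$ with $k\neq 0$, the $m=0$ term vanishes (it sits in degree $0$), the $m=1$ term vanishes (degrees $>r$), and the $m\geq 2$ terms vanish by the estimate above; hence $\foa{V}^k = 0$. In degree $k=0$, the $m\geq 2$ terms still vanish, the $m=1$ term $V^0$ vanishes since $0\leq r$ is excluded from the degrees of $V$, and only the $m=0$ term $P(0)$ survives, so $\foa{V}^0 = P(0)$. Finally, $\foa{V}$ is $r$-connected: $H^i(\foa{V}) = 0$ for $i<0$ since even the underlying cochain groups vanish there, the unit $\eta: P(0)\to \foa{V}$ is the inclusion of the $m=0$ summand and an isomorphism onto $\foa{V}^0 = Z^0$ (there is nothing in degree $-1$, so every degree-$0$ element is a cocycle, and the image of $d$ from $\foa{V}^{-1}=0$ is zero), giving $P(0)\cong H^0(\foa{V})$; and $H^1 = \cdots = H^r = 0$ because $\foa{V}^k = 0$ for $0<k\leq r$.

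I do not expect a genuine obstacle here: the argument is an elementary degree count. The only point requiring a little care is the interplay between the arity-indexed pieces and the use of integrality to sharpen the tameness inequality, and keeping track of the exceptional arities $m=0,1$; the estimate actually yields vanishing through degree $r+1$ for arities $\geq 2$, which is slightly stronger than needed and will be convenient later.
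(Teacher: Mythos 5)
Your proof is correct and follows essentially the same route as the paper: decompose $\foa{V}^k$ by arity, dispose of arities $0$ and $1$ directly, and for arity $\geq 2$ run the same tameness-plus-integrality degree count (you bound the total degree below by $r+2$, while the paper equivalently fixes $k\leq r$ and shows the operadic degree lands in the vanishing range). One small slip in your connectedness paragraph: the vanishing of $\foa{V}^{-1}$ only shows there are no coboundaries in degree $0$, not that degree-$0$ elements are cocycles; that they are cocycles follows instead from the differential vanishing on $P(0)$, since $P(0)\in\{0,\mk\}$ is concentrated in degree $0$.
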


\begin{proof}
Let $k\in\mathbb{Z}$. The degree $k$-part of $\foa{V}$ may be written as
\[
\foa{V}^k  = P(0)^k\oplus \left(\sum_{i> r}P(1)^{k-i}\otimes_{\Sigma_1} V^i\right)\oplus\left(
\sum \limits_{\substack{n\geq 2,\\ i_1,\cdots,i_n> r}}P(n)^{q_n}\otimes_{\Sigma_n} V^{i_1}\otimes\cdots\otimes V^{i_n}\right)\ ,
\]
where $q_n=k-i_1-\cdots-i_n\leq k-n(1+r)$.
Since $P(0)^k=0$ for all $k\neq 0$ and $P(1)^{k-i}=0$ for all $k\neq i$, it suffices to
see that for all $n\geq 2$ and all $k\leq r$ we have $P(n)^{q_n}=0$.
Since $P$ is $r$-tame, it suffices to prove that $q_n\leq q_n^*:=(1-n)(1+r)$.
Let $n\geq 2$ be fixed and assume that $k\leq r$. Then
\[q_n=k-i_1-\cdots-i_n\leq k-n(1+r)\leq r-n(1+r)=q_n^*-1<q_n^*.\qedhere\]
\end{proof}

The second lemma characterizes the good behavior of $r$-tame operads with respect to KS-extensions and is inspired in Lemma 10.4 of \cite{GM}.

\begin{lemma}\label{comptes}Let
$V=\bigoplus_{r< i\leq p}V^i$ be a graded vector space with $0\leq r< i\leq p$.
Let $V'$ be a graded vector space of homogeneous degree $p$
and let $P$ be an $r$-tame operad.
Then:
\begin{enumerate}
\item [(1)]$\foa{V\oplus V'}^k=\foa{V}^k$ for all $k<p$ and $\foa{V\oplus V'}^{p}=\foa{V}^p\oplus V'$.
\item [(2)]If $r+1<p$ and $V^{r+1}=0$  then $\foa{V\oplus V'}^{p+1}=\foa{V}^{p+1}$.
\end{enumerate}
\end{lemma}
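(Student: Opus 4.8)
The plan is to write $\foa{V\oplus V'}$ as a direct sum of summands $P(n)\otimes_{\Sigma_n}(V\oplus V')^{\otimes n}$ and, expanding each tensor power $(V\oplus V')^{\otimes n}$ into its pieces according to how many tensor factors land in $V'$, to show that for degree reasons all summands in which at least one factor lies in $V'$ must vanish unless we are in one of the exceptional cases named in the statement. Concretely, a typical summand contributing to degree $k$ has the form $P(n)^{q}\otimes_{\Sigma_n}W^{i_1}\otimes\cdots\otimes W^{i_n}$ with each $W^{i_j}$ equal to either some $V^{i_j}$ (so $r<i_j\leq p$) or to $V'$ (so $i_j=p$), and with $q=k-i_1-\cdots-i_n$. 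I would first dispose of the $n=0$ and $n=1$ summands, which behave exactly as in Lemma~\ref{positives}: $P(0)$ contributes only in degree $0$, and $P(1)=\mk$ forces $k=i_1$, giving the summand $V$ itself (no $V'$ is possible for $n=1$ when $k<p$, and exactly one copy of $V'$ when $k=p$).

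Next, for $n\geq 2$ I would bound $q$. Using $i_j\geq r+1$ for every factor, we get $q=k-\sum i_j\leq k-n(r+1)$. For part~(1) with $k\leq p$: if $k<p$ then $q\leq k-n(r+1)< p-n(r+1)\leq p-2(r+1)$, and one checks $p-2(r+1)\leq (1-n)(1+r)$ reduces, after rearranging, to $p\leq (r+1)(3-n)+\dots$; more cleanly, since $k\le p$ and at least one factor can only help, I would instead argue directly that $q\le k-n(1+r)\le p-n(1+r)$ and compare $p-n(1+r)$ with $q_n^*=(1-n)(1+r)$: this difference is $p-(1+r)$, which is $>0$ in general, so the naive bound is not enough and I must use that at least \emph{two} factors are present together with $k<p$. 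The sharp observation is: since $n\geq 2$, write $\sum i_j\geq (n-1)(r+1)+i_n$ and note that if any factor is $V'$ then that $i_j=p$, forcing $q\leq k-(n-1)(r+1)-p< p-(n-1)(r+1)-p=-(n-1)(r+1)\leq (1-n)(1+r)=q_n^*$, so $P(n)^q=0$; whereas if no factor is $V'$, the summand already lies in $\foa{V}^k$. For $k=p$ the same computation with a $V'$-factor gives $q\leq p-(n-1)(r+1)-p=-(n-1)(1+r)=q_n^*$, and $r$-tameness kills strict inequality cases; the boundary case $q=q_n^*$ occurs only when $n=2$ and one factor is exactly $V'$ and the other is exactly degree $r+1$, i.e.\ needs $V^{r+1}\neq 0$ — but then both factors are forced to be $V'$ or degree-$(r+1)$ elements, and a short case analysis shows this too contributes $0$ (for $n=2$, two $V'$ factors give $q=p-2p=-p<-(1+r)=q_2^*$ since $p>r$; one $V'$ and one $V^{r+1}$ gives $q=p-p-(r+1)=-(r+1)=q_2^*$, so $P(2)^{q_2^*}$ \emph{could} be nonzero, but degree $k=p$ requires this combination to equal $p$, i.e.\ $q_2^*+p+(r+1)=p$, impossible). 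Hence $\foa{V\oplus V'}^p=\foa{V}^p\oplus V'$ and $\foa{V\oplus V'}^k=\foa{V}^k$ for $k<p$, proving~(1).

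For part~(2) we take $k=p+1$, assume $r+1<p$ and $V^{r+1}=0$, so now every $V$-factor satisfies $i_j\geq r+2$ while $V'$-factors contribute $p$. Again $n=0,1$ are immediate (for $n=1$, $k=i_1=p+1$ is excluded since $V$ has no degree-$(p+1)$ part and $V'$ has degree $p$). For $n\geq 2$ with at least one $V'$-factor: $q=k-\sum i_j\leq (p+1)-p-(n-1)(r+2)=1-(n-1)(r+2)$, and one checks $1-(n-1)(r+2)\leq (1-n)(1+r)$ iff $1\leq (n-1)(r+2)-(n-1)(r+1)=(n-1)$, true for $n\geq 2$, with equality only at $n=2$; the $n=2$ equality case is $q=q_2^*=-(1+r)$ which requires $(p+1)-p-i_2=-(1+r)$, i.e.\ $i_2=r+2$, a legitimate degree — so I must rule out $P(2)^{-(1+r)}\otimes (V'\otimes V^{r+2})$, but this sits in degree $-(1+r)+p+(r+2)=p+1$, which is exactly our target, so this summand is \emph{not} automatically zero by degree. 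This is the one genuine subtlety: here I would invoke $r$-tameness more carefully — $P(2)^{q}=0$ for $q\le (1-2)(1+r)=-(1+r)$ means $P(2)$ \emph{vanishes} in degree $-(1+r)$ and below, so $P(2)^{-(1+r)}=0$ after all. (I should double-check the inequality in Definition~\ref{defrtame} is ``$\leq$'', which it is, so the boundary degree is included in the vanishing.) With that, every $n\geq 2$ summand involving $V'$ vanishes, and summands with no $V'$-factor lie in $\foa{V}^{p+1}$, giving $\foa{V\oplus V'}^{p+1}=\foa{V}^{p+1}$.

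The main obstacle is bookkeeping the boundary cases $n=2$ where the degree estimate is tight: one must use the full strength of $r$-tameness (vanishing in degree $(1-n)(1+r)$, not just strictly below) together with the hypotheses $p>r+1$ and $V^{r+1}=0$ in part~(2) to kill precisely those terms that the crude inequality leaves alive. Everything else is a routine expansion of the free algebra together with the degree constraints already exploited in Lemma~\ref{positives}.
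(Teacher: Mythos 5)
Your route is the same as the paper's: expand the free algebra arity by arity, isolate the summands containing at least one tensor factor from $V'$, bound the degree $q$ of the operad component by $q_n^*=(1-n)(1+r)$, and invoke $r$-tameness, with the $n=0,1$ terms handled as in Lemma~\ref{positives}. The conclusions are right, but one sub-argument in part (1) is wrong as written, namely the boundary discussion at $k=p$. First, the equality case $q=q_n^*$ is not confined to $n=2$: for any $n\geq 2$, one $V'$-factor together with $n-1$ factors of degree exactly $r+1$ gives $q=p-p-(n-1)(r+1)=q_n^*$. Second, your proposed way of killing the $n=2$ term $P(2)^{-(1+r)}\otimes_{\Sigma_2}\bigl(V^{r+1}\otimes V'\bigr)$ by degree counting fails: its total degree is $-(1+r)+(r+1)+p=p$, which is exactly $k=p$, so the claimed contradiction ``$q_2^*+p+(r+1)=p$, impossible'' is an identity, not an impossibility. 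If tameness were the strict inequality $q<(1-n)(1+r)$, these boundary terms would genuinely survive and the equality $\foa{V\oplus V'}^{p}=\foa{V}^p\oplus V'$ would fail without extra hypotheses such as $V^{r+1}=0$.

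The repair is precisely the observation you make yourself in part (2): Definition~\ref{defrtame} demands $P(n)^q=0$ for \emph{all} $q\leq(1-n)(1+r)$, boundary included, so every summand with $q\leq q_n^*$ vanishes outright and no case analysis is needed. This is exactly how the paper argues: it only establishes $q\leq q_n^*$, separately for the purely-$V'$ summands and for the mixed ones, and concludes. With that correction your part (1) closes, and your part (2), including the tight $n=2$ case with a $V$-factor of degree $r+2$, is correct and coincides with the paper's computation.
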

\begin{proof}
For all $k\in\mathbb{Z}$ we may write
\[
{{\foa{V\oplus V'}^k}\over{\foa{V}^k}} =
\left(\sum_{n\geq 1}P(n)^{q_n}\otimes_{\Sigma_n} V'^{\otimes n}\right)\oplus
\left(\sum \limits_{\substack{n\geq 2,\, 1\leq j\leq n-1\\ r< i_1\leq \cdots\leq i_j\leq p}}
P(n)^{q_n'}\otimes_{\Sigma_n} V^{i_1}\otimes\cdots\otimes V^{i_j}\otimes V'^{\otimes (n-j)}\right),
\]
where $q_n=k-pn$ and $q_n'=k-i_1-\cdots-i_j-p(n-j)$.
We first show that
for $n\geq 2$ and $k\leq p$, we have $P(n)^{q_n}=P(n)^{q_n'}=0$.
Since $P$ is $r$-tame, it suffices to see that both $q_n$ and $q_n'$ are smaller or equal than  $q_n^*:=(1-n)(r+1)$.
Since $r<p$, we have
\[
q_n=k-pn\leq p(1-n)\leq (1-n)(1+r)=q_n^* \ .
\]
Note that $q_n'$ attains its maximum when $k=p$, $j=n-1$ and $i_1=\cdots =i_j=r+1$. Then
\[
q_n'\leq p +(1-n)(1+r)-p=q_n^* \ .
\]
This proves that for $k\leq p$ we have
\[
{{\foa{V\oplus V'}^k}\over{\foa{V}^k}}\cong
P(1)^{k-p}\otimes V'.
\]
Now (1) follows from the fact that $P(1)^{k-p}=0$ for all $k\neq p$ and $P(1)^0=\mk$.

Assume that $p>r+1$ and $V^{r+1}=0$. Then in the above formula for
${{\foa{V\oplus V'}^{p+1}}/{\foa{V}^{p+1}}}$ we have:
if $n>1$ then
\[
q_n=p+1-pn=p(1-n)+1\leq (r+2)(1-n)+1=q_n^*+2-n\leq q_n^*
\]
and $q_1=1\neq 0$.

Note that now $q_n'$ attains its maximum when $j=n-1$ and $i_1=\cdots =i_j=r+2$. Then
for all $n\geq 2$ we have
\[
q_n'=p+1-i_1-\cdots-i_j-p(n-j)\leq p+1+(r+2)(1-n)-p=q_n^*+(2-n)\leq q_n^* \ .
\]
Therefore all the contributions vanish and (2) is satisfied.
\end{proof}

\begin{theorem}\label{existenceminimal} Let $P$ be an $r$-tame operad.
Then every $r$-connected $P$-algebra $A$ has a Sullivan $r$-minimal model
$f:\M\to A$ with  $\M^0=P(0)$ and $\M^i=0$ for all $i<r$ with $i\neq 0$.
Furthermore, if $A$ is $(r+1)$-connected and $H^*(A)$ is of finite type, then $\M$ is of finite type.
\end{theorem}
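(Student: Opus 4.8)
The plan is to carry out Sullivan's classical step-by-step construction, now in the category of $P$-algebras, using the two technical lemmas just proved (Lemma~\ref{positives} and Lemma~\ref{comptes}) to control degrees, and the lifting property of KS-extensions (Lemma~\ref{lifting_extensions}) to produce the structure maps. The construction proceeds by building an increasing sequence of Sullivan $r$-minimal $P$-algebras $\M[p]$ together with compatible morphisms $f_p:\M[p]\to A$, indexed by the degree $p>r$ of the generators attached at each stage; inside a fixed degree $p$ one does a further sub-induction to adjust first cohomology in that degree and then kill the relevant relations. The colimit $\M=\cup_p\M[p]$ with the induced $f=\cup_p f_p$ is the desired minimal model.

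First I would set up the base step: start from $\M[r]=P(0)$, with the unit $\eta:P(0)\to A$, which induces an isomorphism on $H^0$ and is injective on $H^{\leq r}$ by $r$-connectedness. For the inductive step, suppose $f_{p-1}:\M[p-1]\to A$ has been constructed so that it induces an isomorphism on $H^i$ for $i<p$ and a monomorphism on $H^p$. One first attaches generators in degree $p$ to make $H^p(f_p)$ surjective: choose a graded vector space $W$ mapping onto a complement of $\im H^p(f_{p-1})$ in $H^p(A)$, lift these classes to cocycles in $A$, and form the KS-extension $\M[p-1]\sqcup_0\foa{W}$ with zero differential on $W$, extending $f_{p-1}$ by sending $W$ to the chosen cocycles (this uses the universal property, Lemma~\ref{univ_morfisme}, since the compatibility condition $d\varphi=fd$ is vacuous here as $d|_W=0$). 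Then one kills the kernel on $H^{p+1}$: pick generators $V'$ in degree $p$ mapping onto $\ker H^{p+1}(f)$, choose for each a cocycle $z$ in $\M[p-1]\sqcup_0\foa{W}$ representing it together with $b\in A^p$ with $db=f(z)$, and define $d:V'\to Z^{p+1}$ by $v\mapsto z$, extending $f$ by $v\mapsto b$; again Lemma~\ref{univ_morfisme} applies since $d\varphi=fd$ holds by construction. The crucial point, guaranteeing that these two sub-steps do not interfere with lower degrees and that the cohomology isomorphism is preserved up to degree $p$ after the step, is exactly Lemma~\ref{comptes}(1): a KS-extension in degree $p$ changes the underlying complex only in degrees $\geq p$. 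A short cochain-complex diagram chase then shows $f_p$ is an isomorphism on $H^{<p+1}$ and a monomorphism on $H^{p+1}$.

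For the finite type statement, assume $A$ is $(r+1)$-connected and $H^*(A)$ of finite type. The extra hypothesis $H^{r+1}(A)=0$ means that at stage $p=r+1$ no generators are needed (one checks $\M[r+1]=P(0)$), so effectively all generators live in degrees $p>r+1$, i.e. in the range where Lemma~\ref{comptes}(2) is available: with $V^{r+1}=0$, a degree-$p$ KS-extension leaves the complex unchanged in degree $p+1$ as well. Arguing by induction on $p$ one shows that at each stage only finitely many generators are required — $W$ is finite-dimensional because $H^p(A)$ is, and $V'$ is finite-dimensional because, by Lemma~\ref{comptes}(2) together with the inductive hypothesis, $Z^{p+1}(\M[p-1]\sqcup_0\foa{W})$ and $H^{p+1}$ of the stage are finite-dimensional; the point is that new generators in one degree contribute to the next degree only through the tame bound, which (2) closes off. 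Hence each $\M[p]$ is finite-dimensional in each degree, and so is the colimit.

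The main obstacle is the bookkeeping of degrees, i.e. verifying that the combinatorial conditions of Lemma~\ref{comptes} are met at every stage: one must carefully track that the generators attached so far are concentrated in degrees in $(r,p]$ with $V^{r+1}=0$ in the finite-type case, so that parts (1) and (2) genuinely apply and the induction on the pair (degree, sub-step) closes. Everything else — the diagram chases on cones of morphisms, the applications of the universal property of KS-extensions, and passing to the colimit (which commutes with cohomology since it is a filtered colimit of injections on underlying complexes) — is a routine adaptation of the $\Com$-algebra argument of \cite[Chapter~11]{GM}.
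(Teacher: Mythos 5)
Your overall plan (degreewise induction, two sub-steps per degree, control of degrees via Lemmas~\ref{positives} and~\ref{comptes}, colimit at the end) is the same as the paper's, and your finite-type argument via Lemma~\ref{comptes}(2) in the $(r+1)$-connected case is essentially the argument given there. But there is a genuine gap in the general $r$-connected case: you kill $\Ker H^{p+1}(f)$ \emph{once} per degree and then assert that "a short diagram chase shows $f_p$ is a monomorphism on $H^{p+1}$". This does not follow. Attaching the degree-$p$ generators $V'$ changes the underlying complex in degree $p+1$: Lemma~\ref{comptes}(1) only guarantees that degrees $\leq p$ are controlled, and part (2), which would freeze degree $p+1$, requires $p>r+1$ \emph{and} no generators in degree $r+1$ — exactly the hypotheses you only have in the finite-type part. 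Concretely, for an $r$-tame operad $P(2)^{-r}$ may be nonzero, so two new generators of degree $p=r+1$ produce new elements of degree $p+1$ in $\foa{V\oplus V'}$, and these can give fresh classes in $\Ker H^{p+1}(f_p)$ that your single extension never sees. This is the same phenomenon that forces the iterated construction in the non-simply-connected commutative case (\cite[Theorem 13.1]{GM} as opposed to \cite[Theorem 10.3]{GM}).

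The paper resolves this by a sub-induction inside each degree $n$: it sets $V[n,i]=\Ker(H^{n+1}f_{n,i-1})=H^n(C(f_{n,i-1}))$ and attaches a (possibly infinite) sequence of degree-$n$ KS-extensions $\M[n,i]$, then takes $\M[n]=\bigcup_i\M[n,i]$; the key additional point, which your sketch lacks, is the inclusion $\Ker(H^{n+1}f_{n,i})\subset\Ker(H^{n+1}j_{i+1})$ (where $j_{i+1}$ is the inclusion into the next stage), which shows that every kernel class dies at the next stage and hence $\Ker(H^{n+1}f_n)=0$ in the colimit. (A cosmetic difference: the paper packages your two sub-steps "hit the cokernel in $H^n$" and "start killing the kernel in $H^{n+1}$" into a single extension by $V[n,0]=H^n(C(f_{n-1}))$, using a section of $Z^n(C(f_{n-1}))\twoheadrightarrow V[n,0]$, and proves the isomorphism on $H^n$ by a five-lemma argument on cones; your split version is fine and is in fact what the paper does in the chain setting.) To repair your proof you must add this iteration and colimit argument; as written, the inductive step fails already at $p=r+1$.
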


\begin{proof}
We follow the steps of the classical proof of existence of Sullivan minimal models for $\Com$-algebras 
(see \cite[Theorem 10.3]{GM} for the case of simply connected $\Com$-algebras and \cite[Theorem 13.1]{GM} 
or \cite[Theorem V.8.11]{GMa} for the non-simply connected case).

We will construct, inductively over the degree $n\geq 0$,
a sequence of free $P$-algebras $\M[n]$ together with morphisms of $P$-algebras $f_n:\M[n]\to A$ satisfying the following conditions:

\begin{enumerate}
 \item [$(a_n)$] The $P$-algebra $\M[n]$ Sullivan $r$-minimal and is either equal to $\M[n-1]$ or a composition of KS-extensions
 of degree $n$ of $\M[n-1]$. The morphism $f_n$ extends
 $f_{n-1}$.
 \item [$(b_n)$] The map $H^if_n$ is an isomorphism for all $i\leq n$ and a monomorphism for $i=n+1$.
\end{enumerate}

Then the morphism $f:\bigcup_n f_n:\bigcup_n \M[n] \to A$ will be a Sullivan $r$-minimal model for $A$.
Indeed, condition $(a_n)$ implies that $\M$ is Sullivan $r$-minimal and that $\M^n=\M[k]^n$ for all $k\geq n$.
From $(b_{n+1})$ it follows that $H^n(C(f))=H^n(C(f_{n+1}))=0$. Therefore $f$ is a quasi-isomorphism.

Let $\M[0]=P(0)$. The unit map $\eta:P(0)\to A$
gives a morphism of $P$-algebras $f_0:\M[0]\to A$.
For all $0<i\leq r$ we let $\M[i]=\M[0]$ and $f_i=f_0$. Since $A$ is $r$-connected,
conditions $(a_i)$ and $(b_i)$ are satisfied for all $i\leq r$.

Assume inductively that we have a morphism of $P$-algebras $f_{n-1}:\M[n-1]\to A$ satisfying $(a_{n-1})$ and $(b_{n-1})$.
Condition $(b_{n-1})$ is equivalent to the vanishing of $H^i(C(f_{n-1}))$  for all $i<n$.
Let
\[V[n,0]:=H^n(C(f_{n-1}))\] and consider it as a graded vector space of homogeneous degree $n$.
Take a section of the projection $Z^n(C(f_{n-1}))\twoheadrightarrow V[n,0]$ to obtain a linear differential
$d:V[n,0]\to Z^{n+1}\M[n-1]$ and a linear map $\varphi:V[n,0]\to A^n$
such that $d\varphi=f_{n-1}d$. We then let
\[
\M[n,0]:=\M[n-1]\sqcup_d\foa{V[n,0]}
\]
and denote by $f_{n,0}:\M[n,0]\to A$ the extension of $f_{n-1}$ by $\varphi$.

By Lemma $\ref{positives}$, $\M[n,0]$ is an $r$-connected $P$-algebra. Furthermore, by
$(1)$ of Lemma $\ref{comptes}$ we have that $\M[n,0]^{k}=\M[n-1]^k$ for all $k<n$
and $\M[n,0]^{n}=\M[n-1]^n\oplus V[n,0]$.
In particular, we have $H^if_{n,0}=H^if_{n-1}$ for all $i<n$. Hence by induction hypothesis, $H^if_{n,0}$ is an isomorphism for all $i<n$.
We next prove that $H^nf_{n,0}$ is an isomorphism.
Denote by $j_0:\M[n-1]\to \M[n,0]$ the inclusion.
The morphism of cones $(\id , f_{n,0}):C(j_0)\to C(f_{n-1})$ induces an isomorphism in degree $n$ cohomology
\[H^n(\id , f_{n,0}):H^n(C(j_0))\to H^n(C(f_{n-1})).\]
Indeed, since $\M[n,0]^{n}=\M[n-1]^n\oplus V[n,0]$,
every element in $Z^n(C(j_0))$ may be written as $(x,x'+v)$ where $x,x'\in \M[n-1]$
and $v\in V[n,0]$ are such that $dx=0$ and $dx'+dv=x$. 
The map $H^n(\id , f_{n,0})$ is then given by 
\[[(x,x'+v)]\mapsto [(x,f_{n-1}x'+\varphi v)].\]
To prove surjectivity, note that if $[(x,a)]\in H^n(C(f_{n-1}))$, then there exists $v\in V[n,0]$ with $dv=x$ and $\varphi v=a$.
Therefore $[(x,v)]\in H^n(C(j_0))$ maps to $[(x,a)]$.
To prove injectivity, note that every element in $H^n(C(j_0))$ admits a representative of the form $(dv,v)$.
Then the condition $(dv,\varphi v)=D(x,a)=(dx,f_{n-1}x-da)$ implies that $v=0$.

Now, consider the  morphism of long exact sequences in cohomology
\[
\xymatrix{
H^{n-1}(C(j_0))\ar[d]\ar[r]&H^n(\M[n-1])\ar[d]\ar[r]&H^n(\M[n,0])\ar[d]\ar[r]&H^n(C(j_0))\ar[d]\ar[r]&H^{n+1}(\M[n-1])\ar[d]\\
H^{n-1}(C(f_{n-1}))\ar[r]&H^n(\M[n-1])\ar[r]&H^n(A)\ar[r]&H^n(C(f_{n-1}))\ar[r]&H^{n+1}(\M[n-1]).
}
\]
Since $H^{n-1}(C(j_0))=0$ and $H^n(\id , f_{n,0})$ is an isomorphism,
it follows from the five lemma that $H^nf_{n,0}$ is an isomorphism.

To make $H^{n+1}f_{n,0}$ into a monomorphism, let
\[
V[n,1]:=\Ker(H^{n+1}f_{n,0})=H^n(C(f_{n,0}))\text{ and }\M[n,1]=\M[n,0]\sqcup_d\foa{V[n,1]},
\]
where $V[n,1]$ is considered as a vector space of homogeneous degree $n$ and as in the previous step, we take a section
of the projection $Z^{n}(C(f_{n,0}))\twoheadrightarrow V[n,1]$ to define a differential on $V[n,1]$ and a map 
$f_{n,1}:\M[n,1]\to A$.

Denote by $j_1:\M[n,0]\to \M[n,1]$ the inclusion.
Let $[x]\in H^{n+1}(\M[n,0])$. If $[x]\in \Ker(H^{n+1}f_{n,0})$ then we may write $f_{n,0}x=da$ for some $a\in A$.
The pair $[(x,a)]$ gives an element $v\in \M[n,1]$ with $dv=x$. 
This proves that we have an inclusion
\[\Ker(H^{n+1}f_{n,0})\subset \Ker (H^{n+1}j_1).\]
We iterate the above process by letting
\[
V[n,i]=\Ker(H^{n+1}f_{n,i-1})=H^n(C(f_{n,i-1}))\text{ and }\M[n,i]=\M[n,i-1]\sqcup_d\foa{V[n,i]},
\]
until $\Ker(H^{n+1}f_{n,i})=0$. If this never happens, we let $\M[n]:=\bigcup_i \M[n,i]$ and define
$f_n:\M[n]\to A$ by $f_n|_{\M[n,i]}=f_{n,i}$. 
Reasoning as before, we obtain an inclusion
\[\Ker(H^{n+1}f_{n,i})\subset \Ker ( H^{n+1}j_{i+1}),\]
where $j_i:\M[n,i-1]\to \M[n,i]$ denotes the inclusion.
Let $x\in \Ker(H^{n+1}f_{n})$. Then it has a representative $x_i\in H^{n+1}f_{n,i}$ for some $i$.
But then the inclusion $\Ker(H^{n+1}f_{n,i})\subset \Ker ( H^{n+1}j_{i+1})$ implies that 
the image of $x_i$ in $H^{n+1}f_{n,i+1}$ is trivial. Hence $x=0$.
This proves that $\Ker(H^{n+1}f_{n})=0$.
Since $H^{i}f_{n,i}$ is an isomorphism for each $i\leq n$, it follows that $H^{n}f_{n}$ is an
isomorphism. Therefore $(b_n)$ is satisfied. This ends the inductive step.

If $A$ is $(r+1)$-connected, then we can take $\M[r+1]=\M[r]$ and $(a_{r+1})$ and $(b_{r+1})$ are satisfied.
For $n>r+1$, by Lemma $\ref{comptes}$ we have that
$\M[n,0]^{n+1}=\M[n-1]^{n+1}$. This implies that $\Ker(H^{n+1}f_{n,0})=0$
and hence $\M[n]=\M[n,0]=\M[n-1]\sqcup_d\foa{V[n,0]}$. If $H^*(A)$ has finite type, then
$V[n,0]$ is finite dimensional and $\M[n]$ has finite type.
\end{proof}

Let us review a few examples where Theorem $\ref{existenceminimal}$ applies.

The operads $\Ass$, $\Com$ and $\Lie$ encoding differential graded associative, commutative and
Lie algebras respectively are generated by operations in arity-degree $(2,0)$.
Therefore they are concentrated in degree 0. We have:
\begin{table}[h]
\begin{tabular}{ r }
\tiny{degree}\\
\tiny{$\vdots$}\\
\tiny{$1$}\\
\tiny{$0$}\\
\tiny{$-1$}\\
\tiny{$-2$}\\
\tiny{$-3$}\\
\tiny{$-4$}\\
\tiny{$-5$}\\
\tiny{$-6$}\\
\tiny{$-7$}\\
\\\\
\end{tabular}
\begin{tabular}{|| c | c | c |c | c | c |c | c | c | c}
\grisf &\grisf&&&&&&&&\\\hline
\grisf &\grisf&&&&&&&\\\hline 
\tiny{$\ast$}&\tiny{$\ast$}&\jb&\jb&\jb&\jb&\jb&\jb&\jb&\tiny{$\cdots$}\\\hline 
\grisf &\grisf &\grisf &&&&&&\\\hline 
\grisf &\grisf &\grisf &\grisf &&&&&\\\hline 
\grisf &\grisf &\grisf &\grisf &\grisf &&&&\\\hline 
\grisf &\grisf &\grisf &\grisf &\grisf &\grisf &&&\\\hline 
\grisf &\grisf &\grisf &\grisf &\grisf &\grisf &\grisf &&\\\hline 
\grisf &\grisf &\grisf &\grisf &\grisf &\grisf &\grisf &\grisf&\\\hline 
\grisf &\grisf &\grisf &\grisf &\grisf &\grisf &\grisf &\grisf&\grisf\\\hline 
\multicolumn{1}{c}{\text{\tiny{$0$}}}&
\multicolumn{1}{c}{\text{\tiny{$1$}}}&
\multicolumn{1}{c}{\text{\tiny{$2$}}}&
\multicolumn{1}{c}{\text{\tiny{$3$}}}&
\multicolumn{1}{c}{\text{\tiny{$4$}}}&
\multicolumn{1}{c}{\text{\tiny{$5$}}}&
\multicolumn{1}{c}{\text{\tiny{$6$}}}&
\multicolumn{1}{c}{\text{\tiny{$7$}}}&
\multicolumn{1}{c}{\text{\tiny{$8$}}}&
\multicolumn{1}{c}{\text{\tiny{$\cdots$}}}
\end{tabular}
\begin{tabular}{ r }
\\\\\\\\\\\\\\\\\\\\\\\\
\tiny{arity}\\\\
\end{tabular}
\caption*{The operads $\Ass$, $\Com$ and $\Lie$ are $0$-tame}
\end{table}

The above operads have minimal models, encoding the infinity-versions of their algebras.
These are depicted in the following table.

\begin{table}[h]
\begin{tabular}{ r }
\tiny{degree}\\
\tiny{$\vdots$}\\
\tiny{$1$}\\
\tiny{$0$}\\
\tiny{$-1$}\\
\tiny{$-2$}\\
\tiny{$-3$}\\
\tiny{$-4$}\\
\tiny{$-5$}\\
\tiny{$-6$}\\
\tiny{$-7$}\\
\\\\
\end{tabular}
\begin{tabular}{|| c | c | c |c | c | c |c | c | c | c}
\grisf &\grisf&&&&&&&&\\\hline
\grisf &\grisf&&&&&&&\\\hline 
\tiny{$\ast$}&\tiny{$\ast$}&\jb&\jb&\jb&\jb&\jb&\jb&\jb&\tiny{$\cdots$}\\\hline 
\grisf &\grisf &\grisf &\jb&\jb&\jb&\jb&\jb&\jb&\tiny{$\cdots$}\\\hline 
\grisf &\grisf &\grisf &\grisf &\jb&\jb&\jb&\jb&\jb&\tiny{$\cdots$}\\\hline 
\grisf &\grisf &\grisf &\grisf &\grisf &\jb&\jb&\jb&\jb&\tiny{$\cdots$}\\\hline 
\grisf &\grisf &\grisf &\grisf &\grisf &\grisf &\jb&\jb&\jb&\tiny{$\cdots$}\\\hline 
\grisf &\grisf &\grisf &\grisf &\grisf &\grisf &\grisf &\jb&\jb&\tiny{$\cdots$}\\\hline 
\grisf &\grisf &\grisf &\grisf &\grisf &\grisf &\grisf &\grisf&\jb&\tiny{$\cdots$}\\\hline 
\grisf &\grisf &\grisf &\grisf &\grisf &\grisf &\grisf &\grisf&\grisf&\tiny{$\cdots$}\\\hline 
\multicolumn{1}{c}{\text{\tiny{$0$}}}&
\multicolumn{1}{c}{\text{\tiny{$1$}}}&
\multicolumn{1}{c}{\text{\tiny{$2$}}}&
\multicolumn{1}{c}{\text{\tiny{$3$}}}&
\multicolumn{1}{c}{\text{\tiny{$4$}}}&
\multicolumn{1}{c}{\text{\tiny{$5$}}}&
\multicolumn{1}{c}{\text{\tiny{$6$}}}&
\multicolumn{1}{c}{\text{\tiny{$7$}}}&
\multicolumn{1}{c}{\text{\tiny{$8$}}}&
\multicolumn{1}{c}{\text{\tiny{$\cdots$}}}
\end{tabular}
\begin{tabular}{ r }
\\\\\\\\\\\\\\\\\\\\\\\\
\tiny{arity}\\\\
\end{tabular}
\caption*{The operads $\Ass_\infty$, $\Com_\infty$ and $\Lie_\infty$ are 0-tame}
\end{table}

\begin{corollary}
Let $P$ be one of the operads $\Ass$, $\Com$, $\Lie$, $\Com_\infty$ , $\Ass_\infty$ or $\Lie_\infty$.
Then every $0$-connected $P$-algebra has a Sullivan minimal model. Also,
every $1$-connected $P$-algebra with finite type cohomology has a Sullivan minimal model of finite type.
\end{corollary}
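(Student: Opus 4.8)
The plan is to deduce the corollary directly from Theorem~\ref{existenceminimal} by verifying the hypothesis for each of the six operads, namely that each is $0$-tame, and then invoking the theorem with $r=0$. First I would recall that the $r$-tameness condition reads $P(n)^q = 0$ for all $n\geq 2$ and all $q \leq (1-n)(1+r)$; specializing to $r=0$ this becomes $P(n)^q = 0$ for all $q \leq 1-n$, i.e. $P(n)$ is concentrated in degrees $> 1-n$. For $P \in \{\Ass, \Com, \Lie\}$ this is immediate: these operads are generated in arity-degree $(2,0)$ and carry zero differential, hence are concentrated in degree $0$, so $P(n) = P(n)^0$ and certainly $P(n)^q = 0$ for all $q < 0$, in particular for $q \leq 1-n$ since $n\geq 2$. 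This is exactly the content of the first table preceding the corollary.

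For the infinity-versions $\Ass_\infty$, $\Com_\infty$, $\Lie_\infty$, the key input is the remark in the introduction that the minimal model of a reduced $r$-tame operad is again $r$-tame. Since each of $\Ass$, $\Com$, $\Lie$ is reduced ($P(0)=0$) — or, if one prefers the unital versions, one may pass to the reduced part, as only arities $n\geq 2$ enter the tameness condition — and $0$-tame by the previous paragraph, their minimal models $\Ass_\infty$, $\Com_\infty$, $\Lie_\infty$ are $0$-tame as well; this is precisely the shape displayed in the second table. Alternatively, one can verify this by hand: the generators of these minimal models live in arity-degree $(n, 2-n)$ (respectively $(n, 3-2n)$ in a different grading convention for $\Lie_\infty$), and a free-operad computation shows that all of $P(n)$ then sits in degrees $\geq 2-n > 1-n$, so the condition holds.

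Having established $0$-tameness in all six cases, the corollary follows from two applications of Theorem~\ref{existenceminimal} with $r=0$: a $0$-connected $P$-algebra $A$ has a Sullivan $0$-minimal model $f\colon \M\to A$ with $\M^0 = P(0)$ and $\M^i = 0$ for $i<0$; and if moreover $A$ is $1$-connected with $H^*(A)$ of finite type, then the ``furthermore'' clause of the theorem gives that $\M$ is of finite type. The only mild point to address is terminology: a ``Sullivan minimal model'' in the statement of the corollary means a Sullivan $0$-minimal model in the sense of Definition~\ref{defSullmin}, which is what the theorem produces. I expect no real obstacle here — the entire content is bookkeeping the tameness bound — so the main (minor) subtlety is just making sure the degree conventions for $\Lie_\infty$ and for the (co)homological grading are handled consistently when quoting that minimal models of $0$-tame operads are $0$-tame, but this is covered by the general statement about minimal models of tame operads proved earlier and illustrated in the tables.
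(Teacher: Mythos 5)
Your proof is correct and matches the paper's (implicit) argument: the paper simply records via the arity--degree tables that $\Ass$, $\Com$, $\Lie$ and their minimal models are $0$-tame and then reads the corollary off Theorem~\ref{existenceminimal} with $r=0$, exactly as you do (with the infinity cases also covered by Proposition~\ref{minimaltameistame}). No gaps; your remarks on grading conventions are harmless bookkeeping.
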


More generally, every reduced operad $P$ such that $H(P)(1)=\mk$ has a minimal model (see Theorem 3.125 in \cite{MSS}).
We next prove that minimal models of reduced $r$-tame operads are $r$-tame. We first introduce some notation.

\begin{definition}Let $P\in\Op$.
Given $w\in P(n)^q$, we will denote by $|w|:=(n,q)$ its \textit{arity-degree}. We will say that $w$ is \textit{$r$-tame} if
$q> (r+1)(1-n)$. Note that $P$ is $r$-tame if and only if all its non-trivial elements of arity $\geq 2$ are $r$-tame.
\end{definition}

\begin{lemma}Every free operad $P\in\Op$ generated by $r$-tame elements is $r$-tame.
\end{lemma}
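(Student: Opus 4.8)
The plan is to reduce the statement about a free operad to the combinatorics of trees, and then estimate the arity-degree of each tree-monomial in terms of the arity-degrees of its vertices (the generators). Recall that the free operad $P = \foa{E}$ on a $\Sigma$-module $E$ decomposes as a direct sum, indexed by (isomorphism classes of) rooted trees $T$ with leaves labelled by $\{1,\dots,n\}$, of tensor products $\bigotimes_{v} E(|\mathrm{in}(v)|)$ over the internal vertices $v$ of $T$, where $|\mathrm{in}(v)|$ is the number of inputs at $v$. A nontrivial element of $P(n)^q$ of arity $n\geq 2$ is a sum of such tree-monomials, so it suffices to show that every tree-monomial $w=\bigotimes_v w_v$ contributing to $P(n)^q$ with $n\geq 2$ is $r$-tame, i.e.\ $q>(r+1)(1-n)$.

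First I would set up the bookkeeping. A tree $T$ with $n$ leaves and internal vertices $v_1,\dots,v_s$ has the property that $\sum_{j=1}^s (a_j - 1) = n - 1$, where $a_j = |\mathrm{in}(v_j)|$ is the arity of the vertex $v_j$; this is the standard Euler-characteristic count for trees (each vertex of arity $a_j$ contributes $a_j$ inputs, all but $n$ of which are matched to outputs of other vertices, and there are $s$ outputs, giving $\sum a_j = n + (s-1)$). Since $E$ is assumed connected ($E(1)=\mk$ concentrated in degree $0$), a vertex of arity $1$ contributes the identity and degree $0$, so we may assume every internal vertex has arity $a_j\geq 2$ (alternatively, note that $1$-ary vertices contribute $a_j-1=0$ to the count and $0$ to the degree, so they are harmless). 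The degree of $w$ is $q = \sum_{j=1}^s q_j$, where $w_{v_j}\in E(a_j)^{q_j}$.

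Now I would run the estimate. Since each generating element $w_{v_j}$ is $r$-tame by hypothesis and $a_j\geq 2$, we have the strict inequality $q_j > (r+1)(1-a_j)$, and because everything is integer-valued this gives $q_j \geq (r+1)(1-a_j) + 1$. Summing over $j=1,\dots,s$ and using $\sum_j (1-a_j) = -(n-1) = 1-n$, we obtain
\[
q = \sum_{j=1}^s q_j \;\geq\; (r+1)\sum_{j=1}^s (1-a_j) + s \;=\; (r+1)(1-n) + s.
\]
Since $n\geq 2$, the tree $T$ has at least one internal vertex of arity $\geq 2$, so $s\geq 1$, whence $q \geq (r+1)(1-n) + 1 > (r+1)(1-n)$. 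Thus $w$ is $r$-tame; as $w$ was an arbitrary tree-monomial of arity $\geq 2$, every nontrivial element of $P(n)^q$ with $n\geq 2$ is $r$-tame, i.e.\ $P(n)^q = 0$ whenever $q\leq (1-n)(1+r)$, which is precisely the $r$-tameness of $P=\foa{E}$.

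The only point that requires a little care — and hence the main (mild) obstacle — is justifying that it suffices to check tame\-ness on tree-monomials and that $1$-ary vertices can be discarded: this rests on the explicit tree decomposition of the free operad together with the connectedness hypothesis $E(1)=\mk$, both of which are part of the standing assumptions on operads in Section~\ref{Secprelim}. Once that reduction is in place, the degree estimate above is immediate from the vertex count for trees and the integrality of degrees. One should also record the boundary remark that the free operad on $r$-tame elements is automatically connected and either reduced or unitary according to whether $E(0)=0$ or $E(0)=\mk$, so that it genuinely falls within the class of operads to which Definition~\ref{defrtame} applies.
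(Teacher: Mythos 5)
Your proof is correct, and it takes a mildly different route from the paper's. The paper reduces the lemma to closure of $r$-tameness under a single partial composition: if $|w|=(n,q)$ and $|w'|=(n',q')$ are $r$-tame, then $q+q'\geq (r+1)(1-n)+1+(r+1)(1-n')+1=(r+1)(1-(n+n'-1))+2$, so $w\circ_i w'$ is again $r$-tame; the fact that every arity-$\geq 2$ element of a free operad is a sum of iterated partial compositions of generators (and identities) is left implicit. You instead unwind the full tree-monomial decomposition of the free operad and sum the integrality estimate $q_j\geq (r+1)(1-a_j)+1$ over all internal vertices at once, using the Euler count $\sum_j(a_j-1)=n-1$. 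The two arguments are the same arithmetic at heart -- each composition, respectively each vertex, buys a surplus of at least $1$ over the tameness bound -- but your global version makes explicit the reduction to tree monomials and the harmlessness of arity-one vertices, and records the slightly sharper inequality $q\geq (r+1)(1-n)+s$ in terms of the number $s$ of internal vertices, whereas the paper's inductive version is shorter and needs no tree combinatorics beyond the arity-degree formula for $\circ_i$. Both are complete proofs; yours trades brevity for an explicit justification of the reduction step.
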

\begin{proof}
It suffices to show that if $w,w'\in P$ are $r$-tame, then their partial compositions $w\circ_i w'$
are also $r$-tame. Let $|w|=(n,q)$ and $|w'|=(n',q')$. Then
\[q+q'\geq(r+1)(1-n)+1+(r+1)(1-n')+1=(r+1)(1-(n+n'-1))+2.\]
Since $|w\circ_i w'|=(n+n'-1,q+q')$, this implies that $w\circ_i w'$ is $r$-tame.
\end{proof}

\begin{proposition}\label{minimaltameistame}
Let $P\in\Op$ be a reduced $r$-tame operad. Then its minimal model is $r$-tame.
\end{proposition}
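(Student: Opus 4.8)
The plan is to deduce the statement from the Lemma just proved, namely that a free operad generated by $r$-tame elements is itself $r$-tame. Write the minimal model of $P$ as $\rho\colon(\mathcal{F}(X),\partial)\to P$, where $\mathcal{F}(X)$ denotes the free operad on a graded $\Sigma$-module $X$, $\partial$ is decomposable (it sends each generator to a sum of operadic composites of at least two generators), and $\rho$ is a quasi-isomorphism. Since $P$ is reduced and connected, one may take $X$ concentrated in arities $\geq 2$. It then suffices to show that every generator of $X$ is $r$-tame, i.e. that $X(n)^q=0$ for all $q\leq(r+1)(1-n)$; I would prove this by induction on the arity $n\geq 2$.

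The inductive step rests on two observations. First, a numerical one, refining the computation in the proof of the last Lemma: if $\xi_1,\dots,\xi_m$ are $r$-tame generators of arities $a_1,\dots,a_m\geq 2$, then any composite built from these $m$ generator–occurrences has arity $n=\sum_i a_i-(m-1)$ and degree $\sum_i\deg\xi_i\geq\sum_i\big((r+1)(1-a_i)+1\big)=(r+1)(1-n)+m$; in particular a composite of $m\geq 2$ $r$-tame generators lies in degrees $\geq(r+1)(1-n)+2$. Second, a structural one: writing $X_{<n}=\bigoplus_{2\leq a<n}X(a)$, the fact that all generators have arity $\geq 2$ forces every tree with $n$ leaves whose vertices are labelled in $X$ to be either the one-vertex tree with $n$ leaves or a tree with at least two vertices, all of arity $<n$; hence
\[\mathcal{F}(X)(n)=X(n)\oplus\mathcal{F}(X_{<n})(n)\]
as graded $\Sigma_n$-modules, with $\mathcal{F}(X_{<n})(n)$ consisting of composites of $\geq 2$ generators of arity $<n$, which are $r$-tame by the induction hypothesis. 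Combining the two observations gives $\mathcal{F}(X_{<n})(n)^q=0$ for all $q\leq(r+1)(1-n)+1$. Since $\partial$ is decomposable and preserves arity, it carries $X(n)$ into $\mathcal{F}(X_{<n})(n)$ and preserves $\mathcal{F}(X_{<n})(n)$, so there is a short exact sequence of cochain complexes
\[0\to\mathcal{F}(X_{<n})(n)\to\mathcal{F}(X)(n)\to(X(n),0)\to 0\]
with vanishing differential on the quotient. Its long exact cohomology sequence contains the piece
\[H^q\big(\mathcal{F}(X)(n)\big)\to X(n)^q\to H^{q+1}\big(\mathcal{F}(X_{<n})(n)\big),\]
and for $q\leq(r+1)(1-n)$ the left-hand term equals $H^q(P(n))=0$, because $P$ is $r$-tame and $\rho$ is a quasi-isomorphism in each arity, while the right-hand term vanishes by the previous sentence. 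Hence $X(n)^q=0$ in that range, which closes the induction; the Lemma applied to $\mathcal{F}(X)$ then gives that the minimal model of $P$ is $r$-tame.

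The step I expect to be the main obstacle — or at least the one requiring the most care — is the structural identification $\mathcal{F}(X)(n)=X(n)\oplus\mathcal{F}(X_{<n})(n)$ together with $\partial(X(n))\subseteq\mathcal{F}(X_{<n})(n)$: this is exactly where the hypotheses that $P$ is reduced and connected enter, since they push all generators into arities $\geq 2$, so that producing an arity-$n$ element out of lower generators is automatically a composite of at least two of them, hence genuinely decomposable; and it is what allows the long exact sequence to confine the new generators to the $r$-tame range. The remaining ingredients — additivity of degree under operadic composition, and the vanishing of $H^*(P(n))$ below the line $(1-n)(1+r)$ — are routine consequences of the definitions.
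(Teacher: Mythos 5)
Your proof is correct, but it takes a genuinely different route from the paper. The paper opens the black box of the minimal-model construction (Theorem 3.125 of [MSS]): it recalls that $M=\cup_n M_n$ is built arity by arity, with new generators in arity $n$ coming either from subspaces $A(n,q)\subseteq P(n)^q$ (tame because $P$ is) or from subspaces $B(n,q)\subseteq M_{n-1}(n)^q$ placed in degree $q-1$; since $M_{n-1}$ is generated in arities $<n$, such classes are sums of composites of at least two tame elements, hence of degree $q\geq (r+1)(1-n)+2$, so the degree shift by one keeps them tame. You instead use only the abstract characterization of a minimal model --- free on generators $X$ of arities $\geq 2$ with decomposable differential, arity-wise quasi-isomorphic to $P$ --- and recover the same conclusion homologically: the splitting $\mathcal{F}(X)(n)=X(n)\oplus\mathcal{F}(X_{<n})(n)$, the vanishing of $\mathcal{F}(X_{<n})(n)$ in degrees $\leq(r+1)(1-n)+1$ (your refinement of the preceding Lemma, which is also the arithmetic heart of the paper's proof), and the long exact sequence of $0\to\mathcal{F}(X_{<n})(n)\to\mathcal{F}(X)(n)\to(X(n),0)\to 0$ squeeze $X(n)^q$ between $H^q(P(n))=0$ and $H^{q+1}(\mathcal{F}(X_{<n})(n))=0$ in the tame range. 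Your structural claims are sound: with all generators in arities $\geq 2$ (which is where reducedness and connectedness enter, exactly as you flag), a multi-vertex tree with $n$ leaves has all vertex arities $<n$, and decomposability of $\partial$ in arity $n$ plus arity considerations in arities $<n$ give both $\partial(X(n))\subseteq\mathcal{F}(X_{<n})(n)$ and the subcomplex property. What your approach buys is independence from the particular construction of the minimal model (you never need to know what the added generators are, only that the model is minimal and quasi-isomorphic to $P$), and your degree-shift appears naturally as the connecting homomorphism rather than as a bookkeeping fact about where the MSS algorithm places its generators; the paper's argument is shorter once one is willing to quote the shape of that algorithm, which it needs to cite anyway for existence.
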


\begin{proof}
From the construction of minimal models
of Theorem 3.125 in \cite{MSS}, we easily deduce that for any
reduced operad $P$ with $H(P)(1)=\mk$,
there
is a minimal model $M\to P$ where $M=\cup_{n\geq 2}M_n$ is constructed inductively over the arity $n$
and satisfies:
\begin{enumerate}[(i)]
\item $M_2$ is freely generated by elements of $P(2)$, with $M_2(0)=0$ and $M_2(1)=\mk$.
 \item For $n>2$, $M_n$ is obtained as a free extension of $M_{n-1}$ by
subspaces $A(n,q)\subseteq P(n)^q$ in arity-degree
$(n,q)$ and subspaces $B(n,q)\subseteq  M_{n-1}(n)^q$ in arity-degree $(n,q-1)$.
\end{enumerate}
For our purposes, it is not necessary to know
neither which elements we are adding nor what are their differentials.
We only need to keep track of their possible arities and degrees.

If $P$ is $r$-tame, then $M_2$ is clearly $r$-tame.
Assume inductively that $M_i$ is $r$-tame for all $i<n$.
Property $(ii)$ tells us that $M_n$ is obtained as a free extension of $M_{n-1}$ by
subspaces $A(n,q)\subseteq P(n)^q$ in arity-degree
$(n,q)$ and subspaces $B(n,q)\subseteq  M_{n-1}(n)^q$ in arity-degree $(n,q-1)$.
Elements in $A(n,q)$ are clearly $r$-tame.
Let $w\in B(n,q)$. Since $M_{n-1}$ is generated by elements in arity $<n$,
we may write $w$ as a sum of partial compositions  of the form $ w'\circ_i w''$ where
$w'$ and $w''$ are $r$-tame elements of $M_{n-1}$.
Let $|w'|=(n',q')$ and $|w''|=(n'',q'')$. Then
$|w'\circ_i w''|=(n'+n''-1,q'+q'')=(n,q)$
We get:
\[q=q'+q''\geq (r+1)(1-n')+1+(r+1)(1-n'')+1=(r+1)(1-(n'+n''-1))+2=(r+1)(1-n)+2.\]
This gives $q-1>(r+1)(1-n)$, which is precisely the condition for $w$ to be $r$-tame.
This proves that $M_n$ is $r$-tame and hence $M$ is also $r$-tame.
\end{proof}

\begin{corollary}
Let $P\in\Op$ be a reduced $r$-tame operad and let $P_\infty\to P$ be a minimal model of $P$.
Then every $r$-connected $P_\infty$-algebra has a Sullivan $r$-minimal model. Also,
every $(r+1)$-connected $P_\infty$-algebra with finite type cohomology has a Sullivan $r$-minimal model of finite type.
\end{corollary}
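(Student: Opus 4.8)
The plan is to combine the two results that immediately precede this corollary: Proposition~\ref{minimaltameistame}, which says that a minimal model $P_\infty$ of a reduced $r$-tame operad $P$ is again $r$-tame, and Theorem~\ref{existenceminimal}, which gives Sullivan $r$-minimal models for $r$-connected algebras over any $r$-tame operad. So the corollary is essentially a one-line deduction, and the only thing to write is the chain of implications.

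First I would note that since $P$ is reduced and $r$-tame, its minimal model $P_\infty \to P$ exists by Theorem~3.125 of~\cite{MSS}, and by Proposition~\ref{minimaltameistame} the operad $P_\infty$ is itself $r$-tame. Then I would simply apply Theorem~\ref{existenceminimal} with $P_\infty$ in place of $P$: every $r$-connected $P_\infty$-algebra $A$ has a Sullivan $r$-minimal model $f\colon\M\to A$ with $\M^0 = P_\infty(0)$ and $\M^i = 0$ for all $i<r$, $i\neq 0$. For the finite-type statement, I would invoke the second half of Theorem~\ref{existenceminimal}: if in addition $A$ is $(r+1)$-connected and $H^*(A)$ has finite type, then $\M$ is of finite type. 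Since $P$ is reduced, $P(0)=0$, and one checks from the construction in~\cite{MSS} that $P_\infty(0)=0$ as well, so these are minimal models in the non-unitary sense.

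There is essentially no obstacle here — all the work has been done in Proposition~\ref{minimaltameistame} and Theorem~\ref{existenceminimal}. The only minor point worth a sentence is making sure the hypothesis ``$r$-connected'' for the algebra matches on both sides, which is automatic since $P$-algebras and $P_\infty$-algebras have the same underlying notion of connectedness (cohomological vanishing plus the unit condition on $H^0$, with $P_\infty(0) = P(0) = 0$ so the unit condition reads $H^0(A)=0$). Thus the proof is just: \emph{$P_\infty$ is $r$-tame by Proposition~\ref{minimaltameistame}; now apply Theorem~\ref{existenceminimal} to $P_\infty$.}
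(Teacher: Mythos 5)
Your proposal is correct and matches the paper's intent exactly: the corollary is stated without proof precisely because it is the immediate combination of Proposition~\ref{minimaltameistame} (the minimal model $P_\infty$ of a reduced $r$-tame operad is $r$-tame) with Theorem~\ref{existenceminimal} applied to $P_\infty$. Your side remark that $P_\infty(0)=0$, so the connectedness condition for $P_\infty$-algebras reads $H^0(A)=0$, is also consistent with the construction cited in the paper.
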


An example of $1$-tame operad is given by the operad encoding \textit{Gerstenhaber algebras}:
these are graded-commutative algebras with a Lie bracket of degree $-1$ satisfying the Poisson identity.
The ordinary multiplication has arity-degree $(2,0)$, while the Lie bracket has arity-degree $(2,-1)$.
We have:
\begin{table}[h]
\begin{tabular}{ r }
\tiny{degree}\\
\tiny{$\vdots$}\\
\tiny{$1$}\\
\tiny{$0$}\\
\tiny{$-1$}\\
\tiny{$-2$}\\
\tiny{$-3$}\\
\tiny{$-4$}\\
\tiny{$-5$}\\
\tiny{$-6$}\\
\tiny{$-7$}\\
\\\\
\end{tabular}
\begin{tabular}{|| c | c | c |c | c | c |c | c | c | c}
\grisf &\grisf&&&&&&&&\\\hline
\grisf &\grisf&&&&&&&\\\hline 
\tiny{$\ast$}&\tiny{$\ast$}&\jb&\jb&\jb&\jb&\jb&\jb&\jb&\tiny{$\cdots$}\\\hline 
\grisf &\grisf &\jb &\jb&\jb&\jb&\jb&\jb&\jb&\tiny{$\cdots$}\\\hline 
\grisf &\grisf &\grisf & \jb&\jb&\jb&\jb&\jb&\jb&\tiny{$\cdots$}\\\hline 
\grisf &\grisf &\grisf & & \jb&\jb&\jb&\jb&\jb&\tiny{$\cdots$}\\\hline 
\grisf &\grisf &\grisf &\grisf & &\jb&\jb&\jb&\jb&\tiny{$\cdots$}\\\hline 
\grisf &\grisf &\grisf &\grisf & & &\jb &\jb&\jb&\tiny{$\cdots$}\\\hline 
\grisf &\grisf &\grisf &\grisf &\grisf & & &\jb&\jb&\tiny{$\cdots$}\\\hline 
\grisf &\grisf &\grisf &\grisf &\grisf & & &&\jb&\tiny{$\cdots$}\\\hline 
\multicolumn{1}{c}{\text{\tiny{$0$}}}&
\multicolumn{1}{c}{\text{\tiny{$1$}}}&
\multicolumn{1}{c}{\text{\tiny{$2$}}}&
\multicolumn{1}{c}{\text{\tiny{$3$}}}&
\multicolumn{1}{c}{\text{\tiny{$4$}}}&
\multicolumn{1}{c}{\text{\tiny{$5$}}}&
\multicolumn{1}{c}{\text{\tiny{$6$}}}&
\multicolumn{1}{c}{\text{\tiny{$7$}}}&
\multicolumn{1}{c}{\text{\tiny{$8$}}}&
\multicolumn{1}{c}{\text{\tiny{$\cdots$}}}
\end{tabular}
\begin{tabular}{ r }
\\\\\\\\\\\\\\\\\\\\\\\\
\tiny{arity}\\\\
\end{tabular}
\caption*{The Gerstenhaber operad $\Ger$ is 1-tame}
\end{table}

\begin{corollary}
Every $1$-connected $\Ger$-algebra (resp. $\Ger_\infty$-algebra) has a Sullivan minimal model and
every $2$-connected $\Ger$-algebra (resp. $\Ger_\infty$-algebra) with finite type cohomology has a Sullivan minimal model of finite type.
\end{corollary}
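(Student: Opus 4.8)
The plan is to read off both halves of the statement as special cases of Theorem~\ref{existenceminimal} applied with $r=1$; the only preliminary point is to confirm that $\Ger$ and its minimal model $\Ger_\infty$ are $1$-tame.

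First I would record why $\Ger$ is $1$-tame. Recall that $\Ger$ carries the zero differential and that, in the cohomological convention used here, $\Ger(n)$ is concentrated in degrees from $1-n$ to $0$ (it is the homology of the little $2$-disks operad, with the grading reversed). Hence for every $n\geq 2$ we have $\Ger(n)^q=0$ whenever $q<1-n$, and since $2(1-n)<1-n$ for $n\geq 2$, this gives $\Ger(n)^q=0$ for all $q\leq (1-n)(1+1)$ — exactly Definition~\ref{defrtame} with $r=1$. (This is the content of the last table above.) Moreover $\Ger$ is reduced, $\Ger(0)=0$, and connected, $\Ger(1)=\mk$, so $H(\Ger)(1)=\mk$ and $\Ger$ admits a minimal model $\Ger_\infty\to\Ger$ by Theorem~3.125 of \cite{MSS}. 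Since $\Ger$ is a reduced $1$-tame operad, Proposition~\ref{minimaltameistame} then tells us that $\Ger_\infty$ is $1$-tame as well.

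With this in hand the argument is immediate. Let $P$ be $\Ger$ or $\Ger_\infty$ and let $A$ be a $1$-connected $P$-algebra. Theorem~\ref{existenceminimal} with $r=1$ produces a Sullivan $1$-minimal model $f\colon\M\to A$ with $\M^0=P(0)=0$ and $\M^i=0$ for all $i<1$ with $i\neq 0$; since the generators of $\M$ moreover lie in degrees $>1$, every $\M^i$ with $i\leq 1$ vanishes, so $\M$ is a genuine Sullivan minimal $P$-algebra in the sense of the introduction and $f$ is a Sullivan minimal model of $A$. For the finite-type claim, if $A$ is in addition $2$-connected — i.e.\ $(r+1)$-connected — and $H^*(A)$ has finite type, the last sentence of Theorem~\ref{existenceminimal} gives that $\M$ has finite type.

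I do not expect any genuine obstacle here: the real work sits in Theorem~\ref{existenceminimal} (through Lemmas~\ref{positives} and~\ref{comptes}) and in Proposition~\ref{minimaltameistame}. The only steps requiring a moment of attention are the numerical verification that $\Ger$ satisfies the $1$-tameness inequality, starting from its standard description, and the bookkeeping remark that a Sullivan $1$-minimal algebra with no generators in degrees $\leq 1$ is in particular a Sullivan minimal algebra, so that the corollary reads exactly as stated.
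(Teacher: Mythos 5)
Your proposal is correct and follows the paper's own (implicit) argument exactly: verify that $\Ger$ is $1$-tame from the arity--degree placement of its generators, invoke Proposition~\ref{minimaltameistame} to get $1$-tameness of $\Ger_\infty$, and then apply Theorem~\ref{existenceminimal} with $r=1$ for both the existence and the finite-type statements. The small bookkeeping points you flag (the numerical inequality $q\leq 2(1-n)\Rightarrow q<1-n$ and the fact that a Sullivan $1$-minimal algebra is in particular Sullivan minimal) are exactly what the paper relies on.
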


\begin{remark} In the last section of this paper, we will study chain $P$-algebras for operads of chain complexes; that is, both with positive homological degrees and differential of degree $-1$. In this setting, the generator corresponding to the Lie bracket in the Gerstenhaber operad has arity-degree $(2,1)$. In particular,
$\Ger$ is a $0$-tame operad and the restriction to $1$-connected algebras is no longer necessary.
\end{remark}

\section{Uniqueness of the minimal model}\label{SecUniqueness}

In this section we prove the uniqueness of Sullivan minimal models. The proof is parallel to that in the setting of $\Com$-algebras. As in the previous section, the key ingredient is Lemma $\ref{comptes}$.

\begin{lemma}\label{seccio}Let $P$ be an $r$-tame operad and let
$f:A\to \M$ be a quasi-isomorphism of $r$-connected $P$-algebras, with $\M$ a Sullivan $r$-minimal $P$-algebra.
Then there exists a morphism of $P$-algebras $g:\M\to A$ such that $fg=\id_{\M}$.
\end{lemma}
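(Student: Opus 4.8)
The plan is to construct $g$ by induction along the tower $\M = \bigcup_i \M[i]$ of KS-extensions that defines the Sullivan $r$-minimal algebra $\M$, producing compatible sections $g_i : \M[i] \to A$ with $f g_i = \iota_i$ (the inclusion $\M[i] \hookrightarrow \M$) at each stage, and then setting $g = \bigcup_i g_i$. The base case is $\M[0] = P(0)$, where $g_0 : P(0) \to A$ is forced to be the unit map $\eta_A$, and $f \eta_A = \eta_{\M}$ holds because $f$ is a morphism of $P$-algebras. So the whole content is the inductive step: given $g' : \M[i-1] \to A$ with $f g' = \iota_{i-1}$, and a degree-$n$ KS-extension $\M[i] = \M[i-1] \sqcup_d \foa{V}$ with $n > r$, extend $g'$ over the new generators $V$.

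For the inductive step I would invoke the universal property of KS-extensions (Lemma \ref{univ_morfisme}): to extend $g'$ to $g_i : \M[i] \to A$ it suffices to produce a $\mk$-linear map $\psi : V \to A$ of degree $0$ with $d_A \psi = g' d$, where $d : V \to Z^{n+1}(\M[i-1])$ is the attaching map. The point is that $f' := f \circ (\text{inclusion of } V \text{ into } \M[i])$, call it $f_V : V \to \M[i]^n$, together with the relation $d(f_V) = f g' d = \iota_{i-1} d$ (which holds since $f g' = \iota_{i-1}$), means we are trying to lift $f_V$ through $f$ along the chain-level map. Concretely, since $f : A \to \M$ is a quasi-isomorphism, the cone $C(f)$ is acyclic; the pair $(g' d, f_V) : V \to A^{n+1} \oplus \M^n$ lands in $Z^n(C(f))$ because $d(g'd) = 0$ and $-f(g'd) + d(f_V) = -\iota_{i-1} d + \iota_{i-1} d = 0$. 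Acyclicity of $C(f)$ gives, for each generator, a preimage under the differential; the $A$-component of that preimage (after correcting by a coboundary if necessary so that its $A^{n+1}$-part is exactly $g'd$, using surjectivity of $A^n \oplus \M^{n-1} \to Z^n(C(f))$ modulo $d$) yields the desired $\psi : V \to A^n$ with $d_A \psi = g' d$. Then $g_i$ extends $g'$ and $f g_i = \iota_i$ by construction on both $\M[i-1]$ and $V$.

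The main obstacle — and the place where $r$-tameness and the numerical Lemma \ref{positives}/\ref{comptes} are silently used — is making sure this is a genuine induction with well-defined colimit, i.e. that $g_i|_{\M[i-1]} = g_{i-1}$ strictly and that $\M$ really is exhausted by the $\M[i]$. That is automatic from the definition of Sullivan $r$-minimal algebra. A subtler point is that at each stage I must choose $\psi$ \emph{linearly} in the generators $v \in V$; this is fine because $Z^n(C(f)) \to H^n(C(f)) = 0$ means every cocycle is a coboundary, and one picks a linear section of $C(f)^{n-1} \twoheadrightarrow B^n(C(f)) = Z^n(C(f))$ — no compatibility between different generators is needed since $A \sqcup_d \foa{V}$ is free on $V$ over $\M[i-1]$. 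I expect the only real care to be bookkeeping: verifying that the lift's top component can be normalized to equal $g'd$ on the nose (not just up to a coboundary), which is exactly the standard "lifting against a surjective quasi-isomorphism" argument already packaged in Lemma \ref{lifting_extensions}, applied here with $B = A$, $C = \M$, $w = f$, and the map $\M[i-1] \to A$ given by $g'$. In fact the cleanest write-up is to deduce the lemma directly from Lemma \ref{lifting_extensions} (iterated along the tower), observing that $f$ being a quasi-isomorphism which need not be surjective is harmless because Lemma \ref{lifting_extensions}'s proof only uses acyclicity of $C(w)$ together with surjectivity of $1 \oplus w : Z^n(C(1_B)) \to Z^n(C(w))$, and the latter map is surjective for any quasi-isomorphism $w$.
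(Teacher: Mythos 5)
There is a genuine gap, and it sits exactly at the point you dismiss as bookkeeping. In your inductive step the cone argument gives, for each generator $v\in V$, a preimage $(x,m)\in A^{n}\oplus\M^{n-1}$ of the cocycle $(g'dv,v)\in Z^n(C(f))$, i.e. $-dx=g'dv$ and $-fx+dm=v$. Setting $\psi(v)=-x$ you do get $d\psi=g'd$ on the nose, but $f\psi(v)=v-dm$: the identity $fg_i=\iota_i$ holds only up to the coboundary $dm$, with $m\in\M^{n-1}$. Since $f$ is \emph{not} surjective, you cannot in general absorb this error by choosing a different preimage or adding a cocycle of $A$ -- you would need an element of $A$ mapping exactly onto $dm$. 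The only way to correct is to observe that $m$ already lies in the previously treated stage, $m\in\M[i-1]^{n-1}$, and replace $\psi(v)$ by $\psi(v)+d(g'(m))$, which preserves $d\psi=g'd$ and, using the inductive identity $fg'=\iota_{i-1}$, fixes $f\psi(v)=v$ exactly. But the inclusion $\M^{n-1}=\M[i-1]^{n-1}$ is precisely what $r$-tameness and Lemma \ref{comptes} guarantee, and only because the KS-extensions of a \emph{minimal} Sullivan algebra are ordered by non-decreasing degrees; so tameness and minimality enter essentially, not "silently" for colimit hygiene. Your closing shortcut is also false: in the proof of Lemma \ref{lifting_extensions} the map $1\oplus w:Z^n(C(1_B))\to Z^n(C(w))$ sends $(db,b)\mapsto(db,wb)$, so every element of its image has second component in $\im w$; surjectivity genuinely uses surjectivity of $w$, not just acyclicity of $C(w)$. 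That the strict-section statement cannot hold without minimality is shown by $P=\Com$, $A=\mk$, $\M=\foa{x,y}$ with $|y|=n+1$, $|x|=n$, $dy=0$, $dx=y$: this is a contractible (non-minimal) Sullivan algebra, the unit $f:\mk\to\M$ is a quasi-isomorphism, and there is no $g$ with $fg=\id_\M$.

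For comparison, the paper's proof proceeds by the same induction over the tower but works with the quotient complexes: writing $\M=\M'\sqcup_d\foa{V}$ with $V$ in degree $p$, the section $g'$ is injective and $f$ induces a quasi-isomorphism of complexes $A/g'(\M')\to\M/\M'$; Lemma \ref{comptes} gives $(\M/\M')^{p-1}=0$ and $(\M/\M')^{p}=V$, so one gets a surjection $Z^p(A/g'(\M'))\twoheadrightarrow V$ already at cocycle level, and linear sections (plus the correction using $fg'=\iota$, delegated to the proof of Theorem 14.11 in \cite{FHT}) produce $\varphi:V\to A^p$ with both $d\varphi=g'd$ and $f\varphi=\id_V$ strictly. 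Your cone-of-$f$ formulation can be repaired along the same lines, but as written the normalization step -- the heart of the lemma, since the strict equality $fg=\id_\M$ is what later forces quasi-isomorphisms of minimal algebras to be isomorphisms -- is missing.
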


\begin{proof}
We rewrite the proof of G\'{o}mez-Tato (see Lemma 4.4 of \cite{GoTa})
for $\Com$-algebras in the $P$-algebra setting (see also Theorem 14.11 of \cite{FHT} and
\cite{Roig3}, \cite{Roig4} for a categorical version).

By definition, we may write $\M=\M'\sqcup_d\foa{V}$
where $\M'$ is a free $P$-algebra generated by a graded vector
space $V'$ of degrees $r<i\leq p$ and $V$ is a graded vector space
of homogeneous degree $p$, with $p>0$.
Assume inductively that we have a morphism of $P$-algebras $g':\M'\to A$ such that $fg'=1_{\M'}$.
Then $g'$ is injective.
The morphism $f$ induces a morphism of cochain complexes (not of $P$-algebras!)
\[\overline{f}:A/g'(\M')\to {{\M}/{\M'}}\]
which is a quasi-isomorphism.
By Lemma $\ref{comptes}$ we have that $(\M/\M')^{p-1}=0$ and that $(\M/\M')^p= V$.
This gives  a surjection at the level of cocycles
\[\pi:Z^p(A/g'(\M'))\twoheadrightarrow Z^p({{\M}/{\M'}})=V.\]
We obtain a linear map $\varphi:V\to A^p$ such that $f\varphi=1_V$ to a morphism $g:\M\to A$ by taking sections
of the projections $A\to A/g'(\M')$ and $\pi$ and considering the composition
\[V=(\M/\M')^p\to Z^p(A/g'(\M')\hookrightarrow (A/g'(\M'))^p\to A^p.\]
For a proof of this last fact taking elements and checking that everything works fine see the
proof of Theorem 14.11 in \cite{FHT}.
By Lemma $\ref{univ_morfisme}$, the map $\varphi$ extends $f'$ to a morphism $f:\M\to A$.
\end{proof}

As a classical consequence of Lemma $\ref{seccio}$ we have:

\begin{lemma}\label{quisesiso}
Let $P$ be an $r$-tame operad and let
$f:\M\to \M'$ be a quasi-isomorphism of Sullivan $r$-minimal $P$-algebras.
Then $f$ is an isomorphism.
\end{lemma}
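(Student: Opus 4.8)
The plan is to deduce this from Lemma~\ref{seccio} by a standard ``section of a section'' argument, mimicking the classical proof for $\Com$-algebras. First I would apply Lemma~\ref{seccio} to the quasi-isomorphism $f:\M\to\M'$: since $\M'$ is Sullivan $r$-minimal and both $\M$ and $\M'$ are $r$-connected (being free $P$-algebras on positively-graded vector spaces, so $r$-connected by Lemma~\ref{positives}), there is a morphism of $P$-algebras $g:\M'\to\M$ with $fg=\id_{\M'}$. In particular $g$ is injective, and from $fg=\id$ together with the fact that $f$ induces an isomorphism on cohomology, it follows formally that $g$ also induces an isomorphism on cohomology; hence $g$ is itself a quasi-isomorphism of Sullivan $r$-minimal $P$-algebras.

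Next I would apply Lemma~\ref{seccio} again, this time to the quasi-isomorphism $g:\M'\to\M$: there is a morphism of $P$-algebras $h:\M\to\M'$ with $gh=\id_{\M}$. So $g$ has both a left inverse (namely $f$, since $fg=\id_{\M'}$) and a right inverse (namely $h$), and therefore $f=f(gh)=(fg)h=h$, which shows $g$ is a two-sided inverse of $f$. Consequently $f$ is an isomorphism of $P$-algebras, with inverse $g$.

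The only point that needs a little care is the claim that a morphism $g$ of cochain complexes admitting a quasi-isomorphism $f$ as a left inverse is itself a quasi-isomorphism; this is immediate from functoriality of cohomology: $H^*(f)H^*(g)=\id$ forces $H^*(g)$ to be injective, and since $H^*(f)$ is an isomorphism, $H^*(g)=H^*(f)^{-1}$ is an isomorphism. I do not anticipate a genuine obstacle here: the argument is purely formal once Lemma~\ref{seccio} is available, the tameness and connectedness hypotheses being used precisely to invoke that lemma (and Lemma~\ref{positives}) for the free $P$-algebras $\M$ and $\M'$.
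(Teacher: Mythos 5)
Your proposal is correct and follows essentially the same route as the paper: apply Lemma~\ref{seccio} to $f$ to get a section $g$ with $fg=\id_{\M'}$, note that $g$ is then a quasi-isomorphism, apply Lemma~\ref{seccio} again to $g$, and conclude formally that $g$ is a two-sided inverse of $f$. The only (harmless) cosmetic difference is that the paper phrases the last step as ``$g$ is injective and surjective, hence an isomorphism, and $f=g^{-1}$,'' while you use the left-inverse/right-inverse coincidence argument.
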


\begin{proof}
By Lemma $\ref{seccio}$ we have a morphism $g:\M'\to \M$ such that $fg=\id_{\M'}$.
By the two out of three property, $g$ is also a quasi-isomorphism.
Again, by Lemma $\ref{seccio}$ we have a morphism $g':\M\to \M$ such that $gg'=\id_{\M}$.
Therefore $g$ is both injective and surjective and hence an isomorphism and we have $f=g^{-1}$.
\end{proof}

The main result of this section is the following:

\begin{theorem}\label{unicitat}Let $P$ be an $r$-tame operad and let $A$ be an $r$-connected $P$-algebra.
Let $f:\M\to A$ and $f':\M'\to A$ be two Sullivan $r$-minimal models of $A$.
Then there is an isomorphism $g:\M\to \M'$, unique up to homotopy,
such that $f'g\simeq f$.
\end{theorem}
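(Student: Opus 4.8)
The strategy is the standard rational-homotopy argument for uniqueness of minimal models, now available because all the needed ingredients have been established in the $P$-algebra setting: the lifting property of Sullivan algebras against surjective quasi-isomorphisms (Proposition~\ref{strict_lifting}), the homotopy theory of Propositions~\ref{transitive} and~\ref{bijecciohomotopies}, and crucially Lemma~\ref{quisesiso} which says a quasi-isomorphism between Sullivan $r$-minimal $P$-algebras is automatically an isomorphism. First I would produce the map $g$. Since $\M'$ is Sullivan $r$-minimal and in particular a Sullivan $P$-algebra, and since both $\M$ and $\M'$ are $r$-connected by Lemma~\ref{positives}, I want to lift $f:\M\to A$ through $f':\M'\to A$. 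Because $f'$ need not be surjective, I pass to its mapping path $\M(f')$ as in the proof of Proposition~\ref{bijecciohomotopies}: this gives a surjective quasi-isomorphism $q:\M(f')\twoheadrightarrow A$ together with a section-like map $j:\M'\to\M(f')$ with $qj=f'$. Applying Proposition~\ref{strict_lifting} to the solid diagram with $\M$ on the bottom left, $q$ on the right and $f$ on the bottom, I get $\tilde g:\M\to\M(f')$ with $q\tilde g=f$; composing with the projection $p:\M(f')\to\M'$ yields $g:=p\tilde g:\M\to\M'$, and one checks $f'g\simeq f$ exactly as in that proof (the two components of $\tilde g$ landing in $A$ and $B[t,dt]$ give the homotopy).

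Next I would show $g$ is a quasi-isomorphism, hence an isomorphism. From $f'g\simeq f$ and the fact that homotopic maps induce the same map in cohomology (a consequence of the functorial path, used implicitly throughout Section~\ref{Sectionhomotopy}), we get $H^*(f')\circ H^*(g)=H^*(f)$. Since $f$ and $f'$ are quasi-isomorphisms, $H^*(g)$ is an isomorphism, so $g$ is a quasi-isomorphism between Sullivan $r$-minimal $P$-algebras; by Lemma~\ref{quisesiso}, $g$ is an isomorphism.

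Finally I would address uniqueness of $g$ up to homotopy. Suppose $g_0,g_1:\M\to\M'$ are two isomorphisms with $f'g_0\simeq f\simeq f'g_1$. Then $f'g_0\simeq f'g_1$, and since $f'$ is a quasi-isomorphism and $\M$ is a Sullivan $P$-algebra, Proposition~\ref{bijecciohomotopies} gives that $(f')_*:[\M,\M']\to[\M,A]$ is injective; therefore $[g_0]=[g_1]$, i.e. $g_0\simeq g_1$. This also subsumes the previous paragraph's uniqueness phrasing: any $g$ produced as above with $f'g\simeq f$ is forced, up to homotopy, to agree with the lift constructed. The main obstacle I anticipate is purely bookkeeping rather than conceptual: verifying the homotopy $f'g\simeq f$ precisely, keeping straight which maps are strict morphisms and which relations only hold up to homotopy, and making sure Proposition~\ref{bijecciohomotopies} is applied with the correct source (it requires the source, here $\M$, to be Sullivan, which holds). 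No new estimates on the operad $P$ are needed in this section beyond what Lemma~\ref{comptes} already supplied for Lemma~\ref{quisesiso}.
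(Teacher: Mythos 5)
Your proposal is correct and follows essentially the same route as the paper: the paper simply cites Proposition~\ref{bijecciohomotopies} (whose surjectivity and injectivity give existence and uniqueness of $g$ up to homotopy, via exactly the mapping-path argument you unwind) and then Lemma~\ref{quisesiso} to conclude $g$ is an isomorphism. Your added remark that $f'g\simeq f$ forces $H^*(g)$ to be an isomorphism, so that Lemma~\ref{quisesiso} applies, just makes explicit a step the paper leaves implicit.
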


\begin{proof}
By Proposition $\ref{bijecciohomotopies}$ we obtain $g$, uniquely defined up to homotopy, such that $f'g\simeq f$.
By Lemma $\ref{quisesiso}$, $g$ is an isomorphism.
\end{proof}

Let $\Ho(\Alg_P^r)$ denote the localized category of $r$-connected $P$-algebras with respect to the class of quasi-isomorphisms.
Denote by $\mathbf{SMin}_P^r/\!\simeq $ the category of $r$-connected Sullivan minimal $P$-algebras,
whose morphisms are homotopy classes of morphisms of $P$-algebras. We have:

\begin{corollary}\label{CEAlgsP}
Let $P$ be an $r$-tame operad. The category $\Alg_P^r$ of $r$-connected $P$-algebras is a Sullivan category in the sense of \cite{GNPR10}.
In particular, the inclusion of minimal algebras induces an equivalence of categories
\[
\mathbf{SMin}_P^r/\!\simeq
\stackrel{\sim}{\lra}
\Ho(\Alg_P^r) \ .
\]
\end{corollary}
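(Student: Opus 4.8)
The plan is to exhibit the triple $(\Alg_P^r,\Scal,\W)$ --- where $\W$ is the class of quasi-isomorphisms and $\Scal$ the class of homotopy equivalences for the functorial path of Definition~\ref{path} --- as a Sullivan category in the sense of \cite{GNPR10}, with the Sullivan $r$-minimal $P$-algebras playing the role of cofibrant minimal models. Since the asserted equivalence of categories is precisely the general structure theorem for such categories proved in \cite{GNPR10}, the work reduces to checking the axioms, and for each of them one of the results of Sections~\ref{SecMinimals} and \ref{SecUniqueness} is exactly what is needed.

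First I would check that the functorial path $-[t,dt]$ restricts to $\Alg_P^r$: if $A$ is $r$-connected then so is $A[t,dt]=A\otimes\mk[t,dt]$, since $\delta^0\colon A[t,dt]\to A$ is a quasi-isomorphism and therefore identifies their cohomologies together with the induced unit maps. Hence $\Scal$ and $\W$ are well-defined classes of morphisms of $\Alg_P^r$; they contain all isomorphisms, $\W$ has the two-out-of-three property, and $\Scal\subseteq\W$ because a homotopy $h\colon A\to B[t,dt]$ with $\delta^0h=f$ and $\delta^1h=g$ forces $H(f)=H(\delta^0)H(h)=H(\iota)^{-1}H(h)=H(\delta^1)H(h)=H(g)$. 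Moreover, by the remarks preceding Proposition~\ref{transitive} together with Proposition~\ref{transitive} itself, the homotopy relation is a congruence --- reflexive, symmetric, transitive, and compatible with composition on both sides --- on morphisms whose source is a Sullivan $P$-algebra; in particular $[\M,\M']$ is a genuine hom-set when $\M$ is Sullivan $r$-minimal, so $\mathbf{SMin}_P^r/\!\simeq$ is a well-defined category and the inclusion induces a functor $\mathbf{SMin}_P^r/\!\simeq\;\to\;\Ho(\Alg_P^r)$ (homotopic maps become equal after localizing at $\W$, since $\delta^0=\iota^{-1}=\delta^1$ there).

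The core of the argument is that the Sullivan $r$-minimal $P$-algebras are the cofibrant minimal objects of $(\Alg_P^r,\Scal,\W)$. They lie in $\Alg_P^r$ by Lemma~\ref{positives}. They are cofibrant in the sense of \cite{GNPR10}: by Proposition~\ref{bijecciohomotopies}, for any such $C$ and any quasi-isomorphism $w\colon A\to B$ the induced map $[C,A]\to[C,B]$ is a bijection. Every object of $\Alg_P^r$ admits such a model by Theorem~\ref{existenceminimal}, giving the ``enough cofibrant models'' axiom. And they are minimal: by Lemma~\ref{quisesiso} every quasi-isomorphism between Sullivan $r$-minimal $P$-algebras is an isomorphism. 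These are exactly the hypotheses of the structure theorem for Sullivan categories in \cite{GNPR10}, which yields that $\Alg_P^r$ is a Sullivan category and that the functor above is an equivalence: essential surjectivity is the existence of models (Theorem~\ref{existenceminimal}), and full faithfulness is the combination of Proposition~\ref{bijecciohomotopies} with the uniqueness up to homotopy of Theorem~\ref{unicitat}.

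The only real obstacle is bookkeeping: matching our data to the precise axiom list of \cite{GNPR10} --- verifying that Definition~\ref{path} satisfies their notion of functorial path (equivalently, that all objects of $\Alg_P^r$ are ``fibrant'' for this path), and that their notion of ``cofibrant model'' specializes here to ``Sullivan $r$-minimal $P$-algebra''. No mathematics beyond Sections~\ref{SecMinimals}--\ref{SecUniqueness} is required; the corollary is essentially a repackaging of Theorems~\ref{existenceminimal} and \ref{unicitat} in the language of \cite{GNPR10}.
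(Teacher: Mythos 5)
Your proposal is correct and takes essentially the same route as the paper: the paper's (one-line) proof likewise treats the corollary as a repackaging of Theorem~\ref{existenceminimal} and Theorem~\ref{unicitat} (together with Proposition~\ref{bijecciohomotopies} and Lemma~\ref{quisesiso}), simply observing that the choice of a minimal model $A\mapsto\M$ yields a well-defined functor on homotopy categories that is quasi-inverse to the one induced by the inclusion. Your more explicit verification of the axioms of \cite{GNPR10} is the same content, just spelled out in the abstract framework rather than by exhibiting the quasi-inverse directly.
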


\begin{proof} For every $P$-algebra $A$, the choice of a minimal model $\M$ gives a well-defined functor $A \mapsto \M$ between the homotopy categories,
which is the quasi-inverse of the functor induced by the inclusion.
\end{proof}

\section{Algebras over variable operads}\label{SecVariable}
Let $P$ be an operad and $A$ a $P$-algebra. Given two minimal models $F:P_\infty\to P$ and $F':P_\infty'\to P$,
we may consider the reciprocal images $F^*(A)$ and $F'^*(A)$ of $A$ in the categories
of $P_\infty$-algebras and $P_\infty'$-algebras respectively.
In this section, we compare the minimal models of these reciprocal images.
This problem is better understood in the fibred
category of \textit{algebras over all operads}, which we next introduce.

\begin{definition}
Denote by $\Alg$ the category whose objects are pairs $(P,A)$ with $P\in\Op$ and $A\in\Alg_P$ and whose morphisms
$(F,f):(P,A)\to (Q,B)$ are given by a morphism $F:P\to Q$ of operads, together with a morphism
$f:A\to F^*(B)$ of $P$-algebras.
The composition of morphisms $(F,f):(P,A)\to (Q,B)$ and $(G,g):(Q,B)\to (R,C)$ is defined by
$(G,g)\circ (F,f):=(G\circ F, F^*(g)\circ f).$
Objects in $\Alg$ will be called \textit{algebras (over variable operads)}.
\end{definition}

Following the main theorem of \cite{Roig1} and taking into account the remarks of \cite{Stan12},
one can produce a Quillen model category structure on $\Alg$,
from the ones on $\Op$ and $\Alg_P$  (see \cite{BM03}, \cite{Hin97}).
However, since here we are only interested in minimal models,
we don't need the whole power of a Quillen model structure.
As we have seen, in order to talk about and prove existence of minimal models
it suffices to consider weak equivalences (quasi-isomorphisms).
If, on top, we want to study uniqueness, we also need a notion of homotopy.

\begin{definition}
A morphism $(F,f):(P,A)\to (Q,B)$ in $\Alg$ is said to be a \textit{quasi-isomorphism}
if  $F: P \to Q$ is a quasi-isomorphism of operads and
$f:A \to F^*(B)$ is a quasi-isomorphism of $P$-algebras.
\end{definition}

Using the notion of principal extension of an operad (\cite{MSS}, definition 3.138), we define Sullivan operads
as done with operad algebras:

\begin{definition}
A \textit{Sullivan operad} is the colimit of a sequence of
principal extensions  of arities $> 1$, starting from $0$.
\end{definition}

\begin{definition}
We will say that a pair $(R,C)\in\Alg$ is a \textit{Sullivan algebra} if
$R$ is a Sullivan operad and $C$ is a Sullivan $R$-algebra.
\end{definition}

\begin{proposition}\label{liftingvariables}
Let $(R,C)$ be a Sullivan algebra. Then for every solid diagram in $\Alg$
\[
\xymatrix@R=1pc@C=1pc{
&&(P,A)\ar@{->>}[dd]^{(W,w)}\\\\
\ar@{.>}[uurr]^{(G,g)}(R,C)\ar[rr]^-{(F,f)}&&(Q,B)&
}
\]
where $(W,w)$ is a surjective quasi-isomorphism, there exists $(G,g)$ making the diagram commute.
\end{proposition}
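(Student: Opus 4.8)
The plan is to lift the diagram in two stages, corresponding to the two layers of the fibred category $\Alg$: first lift the operad part, then lift the algebra part. Write $(R,C)$ as a Sullivan algebra, so $R$ is a Sullivan operad, built as a colimit of principal extensions starting from $0$, and $C$ is a Sullivan $R$-algebra, built as a colimit of KS-extensions starting from $R(0)$.

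\emph{Step 1: lift $F$ through $W$.} Since $W\colon P\to Q$ is a surjective quasi-isomorphism of operads and $R$ is a Sullivan operad, the operad-level lifting property (the analogue of Proposition~\ref{strict_lifting} at the level of operads, which holds by the same inductive argument, building $G$ one principal extension at a time and using that surjective quasi-isomorphisms of operads have the right lifting property with respect to the generating cofibrations of \cite{Hin97}, \cite{BM03}) produces a morphism of operads $G\colon R\to P$ with $W\circ G = F$.

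\emph{Step 2: lift $f$ through $w$ over $G$.} Now we must produce $g\colon C\to G^*(A)$ of $R$-algebras making the triangle commute. First observe that $G^*(w)\colon G^*(A)\to G^*(F^*(B)) = (WG)^*(B) = F^*(B)$ is a surjective quasi-isomorphism of $R$-algebras: by Section~\ref{functorial}, restriction of scalars preserves surjective morphisms and quasi-isomorphisms, since it does not change underlying complexes. Here I am using $WG = F$ from Step~1 to identify the codomain with $F^*(B)$, which is exactly where $f$ lands. Since $C$ is a Sullivan $R$-algebra, Proposition~\ref{strict_lifting} applied in $\Alg_R$ to the diagram with $G^*(w)$ on the vertical and $f\colon C\to F^*(B)$ on the horizontal yields $g\colon C\to G^*(A)$ with $G^*(w)\circ g = f$. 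The pair $(G,g)$ is then a morphism in $\Alg$, and by construction $W\circ G = F$ and $F^*(g)\circ f$... — more precisely, unwinding the composition law, $(W,w)\circ(G,g) = (WG, G^*(w)\circ g) = (F, f)$, which is the required commutativity.

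\emph{The main obstacle} is Step~1: the excerpt proves the lifting property only for Sullivan $P$-algebras over a \emph{fixed} operad, not the corresponding statement for Sullivan operads. So one must either invoke the model structure on $\Op$ of \cite{Hin97}, \cite{BM03} (in which Sullivan operads as defined here are cofibrant, being built from principal extensions, and surjective quasi-isomorphisms are trivial fibrations), or carry out the parallel inductive argument by hand: at each principal extension $R'\to R'\sqcup_d \mathcal{F}(W')$ one solves a lifting problem of graded vector spaces against the surjection of cocycle complexes coming from the surjective quasi-isomorphism $W$, exactly as in Lemma~\ref{lifting_extensions}. The only subtlety is bookkeeping — the principal extensions of operads are indexed by arity rather than degree — but no new idea is needed. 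Once Step~1 is in place, Step~2 is a direct application of the already-established Proposition~\ref{strict_lifting} together with the elementary stability properties of $F^*$ recorded in Section~\ref{functorial}.
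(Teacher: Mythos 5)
Your proposal is correct and follows essentially the same two-step route as the paper: first lift $F$ through $W$ at the operad level to get $G$ with $WG=F$, then apply Proposition~\ref{strict_lifting} to the surjective quasi-isomorphism $G^*(w)$ and the map $f$, using $G^*W^*=F^*$ to identify the target with $F^*(B)$. The operad-level lift you single out as the main obstacle is handled in the paper simply by citing Lemma~3.139 of \cite{MSS}, which is precisely the lifting property of Sullivan operads (built from principal extensions) against surjective quasi-isomorphisms that you sketch.
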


\begin{proof}
Since $R$ is a Sullivan operad, by Lemma 3.139 of \cite{MSS} there exists a morphism
$G:R\to P$ such that $W\circ G=F$.
Consider the solid diagram of $P$-algebras
\[
\xymatrix{
&G^*(A)\ar@{->>}[d]^{G^*(w)}\\
\ar@{.>}[ur]^{g}C\ar[r]^-{f}&F^*(B)&
}
\]
Note that since $G^*W^*=F^*$, this is well-defined.
Since $C$ is a Sullivan $R$-algebra and $G^*(w)$ is a surjective quasi-isomorphism,
by Proposition $\ref{strict_lifting}$
there is a morphism $g$ making the diagram commute.
\end{proof}

\begin{definition}\label{pathAlg}
A \textit{functorial path} in the category $\Alg$ is defined as the functor
\[
-[t,dt]:\Alg\longrightarrow \Alg
\]
given on objects by $(P,A)[t,dt] = (P[t,dt],A[t,dt])$ and on morphisms by $(F,f)[t,dt]=(F[t,dt],f[t,dt])$, together with the natural transformations

\[\xymatrix{
(P,A)\ar[r]^-{(I,\iota)} & {(P[t,dt],A[t,dt])} \ar@<.6ex>[r]^-{(\Delta^1,\delta^1)} \ar@<-.6ex>[r]_-{(\Delta^0,\delta^0)}  &   {(P,A)}
}
\,\,\,;\,\,\, (\Delta^k, \delta^k)\circ (I,\iota)= \id_{(P,A)} \ .
\]

\end{definition}

Note that if $F:P\to Q$ is a morphism of operads then
$F[t,dt]^*=F^*[t,dt]$.

The path gives a natural notion of homotopy between morphisms in $\Alg$.
As in Section $\ref{Sectionhomotopy}$, the following are classical
consequences of Proposition $\ref{liftingvariables}$.

\begin{proposition}
The homotopy relation between morphisms in $\Alg$ is an equivalence relation for those
morphisms whose source is a Sullivan algebra.
\end{proposition}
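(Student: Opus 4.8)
The plan is to mirror, in the category $\Alg$, the proof of Proposition~\ref{transitive}, using Proposition~\ref{liftingvariables} as the replacement for Proposition~\ref{strict_lifting} and the functorial path of Definition~\ref{pathAlg}. First I would recall that the homotopy relation defined by any functorial path is automatically reflexive, symmetric and compatible with composition: reflexivity comes from $(I,\iota)$, compatibility with composition is formal, and symmetry is induced by the involution $t\mapsto 1-t$ on $\mk[t,dt]$, which gives a self-morphism of the path object $(P,A)[t,dt]$ in $\Alg$ (acting as the identity on the operad component $P[t,dt]$ via $\id_P\otimes(t\mapsto 1-t)$ and as the usual symmetry on $A[t,dt]$). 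So, exactly as in Section~\ref{Sectionhomotopy}, the only thing left to prove is transitivity, and only for morphisms out of a Sullivan algebra $(R,C)$.

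For transitivity, let $(F,f),(F',f'),(F'',f''):(R,C)\to(P,A)$ with homotopies $H:(F,f)\simeq(F',f')$ and $H':(F',f')\simeq(F'',f'')$, where $H,H':(R,C)\to (P,A)[t,dt]=(P[t,dt],A[t,dt])$. Following the proof of Proposition~\ref{transitive}, I would form the "double path object" $(P,A)[t,dt,s,ds]$, that is, the pair $(P[t,dt,s,ds],A[t,dt,s,ds])$, and realize it (or at least the relevant subobject) as the pullback in $\Alg$ of $(\Delta^0_t,\delta^0_t):(P,A)[t,dt]\to(P,A)$ along $(\Delta^1_s,\delta^1_s):(P,A)[s,ds]\to(P,A)$. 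Concretely, pullbacks in $\Alg$ over a fixed target operad are computed componentwise, and here all operad components are $P[t,dt]$, $P[s,ds]$, $P$ with the evident evaluation maps; since $\mk[t,dt]$ is a commutative dg algebra, the computation of the comparison map $\pi$ and the verification that it is a surjective quasi-isomorphism is literally the one already done in Proposition~\ref{transitive} applied both on the operad component (where $P[t,dt,s,ds]$ plays the role of the $\mk[t,dt,s,ds]$-construction) and on the algebra component $A$. The pair $(H,H')$ then defines a morphism $(R,C)\to \M$ into this pullback, and Proposition~\ref{liftingvariables} provides a lift $(G,g):(R,C)\to (P,A)[t,dt,s,ds]$ with $\pi(G,g)=(H,H')$. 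Composing with the codiagonal $\nabla:(P,A)[t,dt,s,ds]\to (P,A)[t,dt]$ induced by $t,s\mapsto t$ (which is a morphism in $\Alg$, acting as $\mathrm{id}_P\otimes\nabla$ on operads and $\mathrm{id}_A\otimes\nabla$ on algebras) yields the desired homotopy $(F,f)\simeq(F'',f'')$.

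The main obstacle is not conceptual but bookkeeping: one must check that the path functor of Definition~\ref{pathAlg} genuinely produces path \emph{objects} in $\Alg$ in the sense needed (in particular that the maps $(\Delta^k,\delta^k)$ are surjective quasi-isomorphisms, which follows from the analogous fact for operads and for $P$-algebras), and that the pullback $\M$ used above is again an object of $\Alg$ with the expected weak-equivalence properties. I would phrase this by noting that, since pullbacks along a fixed operad $P$ and the relevant evaluation maps are computed on underlying operads and underlying $P$-algebras, every verification reduces to the two facts already established earlier in the paper: the corresponding statement for operads (implicit in the use of Lemma~3.139 of~\cite{MSS}) and Proposition~\ref{transitive} for $P$-algebras. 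Granting these, the argument is a routine transcription, so I would keep the written proof short, saying only "the proof is a straightforward adaptation of Proposition~\ref{transitive}, replacing Proposition~\ref{strict_lifting} by Proposition~\ref{liftingvariables} and using that pullbacks in $\Alg$ over a fixed operad are computed componentwise."
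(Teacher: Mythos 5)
Your proposal is correct and follows exactly the route the paper takes: the paper's proof simply says the argument of Proposition~\ref{transitive} carries over verbatim, with Proposition~\ref{liftingvariables} replacing Proposition~\ref{strict_lifting}, which is precisely your plan (your extra remarks on componentwise pullbacks and the path object are just the bookkeeping the paper leaves implicit).
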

\begin{proof}
The proof follows verbatim the proof of Proposition $\ref{transitive}$, using Proposition $\ref{liftingvariables}$.
\end{proof}

Denote by $[(P,A),(Q,B)]$ the set of homotopy classes of morphisms of algebras from $(P,A)$ to $(Q,B)$.

\begin{proposition}\label{bijeccioalgs}
Let $(R,C)$ be a Sullivan algebra.
Any quasi-isomorphism
$(W,w):(P,A)\to (Q,B)$ induces a bijection
$(W,w)_*:[(R,C),(P,A)]\to [(R,C),(Q,B)]$.
\end{proposition}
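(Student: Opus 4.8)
The plan is to mimic the proof of Proposition~\ref{bijecciohomotopies} verbatim, replacing every use of Proposition~\ref{strict_lifting} by Proposition~\ref{liftingvariables} and every use of Proposition~\ref{transitive} by its analogue for $\Alg$ proven just above. The two ingredients we need are: the functorial path $-[t,dt]$ on $\Alg$ of Definition~\ref{pathAlg}, together with the fact that $(I,\iota)$ is a quasi-isomorphism and $(\Delta^0,\delta^0)$, $(\Delta^1,\delta^1)$ are surjective quasi-isomorphisms (this follows from the corresponding statement for $\Op$ and $\Alg_P$, since on underlying complexes these are the maps from the $\Com$-algebra setting); and closure of $\Alg$ under the relevant pullbacks, which holds because limits in $\Alg$ are computed arity-wise and degree-wise just as in $\Alg_P$ and $\Op$.

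First I would prove surjectivity of $(W,w)_*$. Form the mapping path object of $(W,w)$ as the pullback in $\Alg$
\[
\xymatrix{
\M(W,w)\ar[d]_{\pi_1}\ar[r]^-{\pi_2}& (Q,B)[t,dt]\ar[d]^{(\Delta^0,\delta^0)}\\
(P,A)\ar[r]^-{(W,w)}&(Q,B)&,
}
\]
set $p:=\pi_1$, $q:=(\Delta^1,\delta^1)\pi_2$, and $j:=(\id,(I,\iota)(W,w)):(P,A)\to\M(W,w)$. As in Proposition~\ref{bijecciohomotopies}, $q$ is a surjective quasi-isomorphism and $qj=(W,w)$; since $(R,C)$ is a Sullivan algebra, Proposition~\ref{liftingvariables} lifts any $(F,f):(R,C)\to (Q,B)$ through $q$, and $g:=pg'$ satisfies $(W,w)g\simeq (F,f)$ by the two evaluations of $\pi_2 g'$. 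For injectivity, suppose $(F_0,f_0),(F_1,f_1):(R,C)\to(P,A)$ become homotopic after composing with $(W,w)$, via $h$. Form the double mapping path
\[
\xymatrix{
\M(W,w;W,w)\ar[d]\ar[r]&(Q,B)[t,dt]\ar[d]^{((\Delta^0,\delta^0),(\Delta^1,\delta^1))}\\
(P,A)\times(P,A)\ar[r]_-{(W,w)\times(W,w)}&(Q,B)\times(Q,B)&,
}
\]
observe the induced map $\overline{(W,w)}:(P,A)[t,dt]\to\M(W,w;W,w)$ is a quasi-isomorphism, and feed $H=((F_0,f_0),(F_1,f_1),h)$ to the already-proven surjectivity of $\overline{(W,w)}_*$ to get $G:(R,C)\to(P,A)[t,dt]$ with $\overline{(W,w)}G\simeq H$; then $(F_0,f_0)\simeq (\Delta^0,\delta^0)G\simeq(\Delta^1,\delta^1)G\simeq (F_1,f_1)$, where transitivity of $\simeq$ on sources that are Sullivan algebras is the proposition preceding this one.

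The main obstacle, such as it is, is bookkeeping rather than substance: one must check that the pullbacks above actually exist in $\Alg$ and that the evident maps out of and into them behave as in the $\Alg_P$ case. This is where the identity $F[t,dt]^*=F^*[t,dt]$ noted before Definition~\ref{pathAlg} is used — it guarantees that the operad component and the algebra component of each pullback square are compatible, so that the pullback formed arity-wise on operads together with the pullback formed on the underlying algebras assembles into a genuine object of $\Alg$. Once this is granted, surjectivity of $1\oplus w$-type maps and the quasi-isomorphism claims reduce to the operad level (Lemma 3.139 of \cite{MSS}) and the $P$-algebra level (Proposition~\ref{strict_lifting}) separately, and the argument goes through word for word. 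I would therefore keep the written proof short, asserting that it follows the proof of Proposition~\ref{bijecciohomotopies} with the indicated substitutions.
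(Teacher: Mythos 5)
Your proposal is correct and is exactly the paper's argument: the paper proves this proposition by stating that it follows verbatim the proof of Proposition~\ref{bijecciohomotopies}, with Proposition~\ref{liftingvariables} and the $\Alg$-version of transitivity playing the roles you indicate. Your added bookkeeping about pullbacks in $\Alg$ and the identity $F[t,dt]^*=F^*[t,dt]$ is a reasonable elaboration of details the paper leaves implicit, not a different route.
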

\begin{proof}
The proof follows verbatim the proof of Proposition $\ref{bijecciohomotopies}$.
\end{proof}

We now study the existence and uniqueness of minimal models in $\Alg$.

\begin{definition}
We will say that $(P_\infty,\M)\in\Alg$ is a \textit{Sullivan $r$-minimal algebra} if
$P_\infty$ is a minimal operad which is $r$-tame and $A$ is a Sullivan $r$-minimal $P_\infty$-algebra. A \emph{Sullivan $r$-minimal model} for a pair $(P,A)$ is a Sullivan $r$-minimal algebra $(P_\infty, \M)$ together with a quasi-isomorphism $(P_\infty, \M) \to (P,A)$.
\end{definition}

\begin{theorem}Let $P$ be a reduced $r$-tame operad and let $A$ be an $r$-connected $P$-algebra.
Then $(P,A)$ has a Sullivan $r$-minimal model.
\end{theorem}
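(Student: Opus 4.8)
The plan is to combine the two existence results already established: Proposition~\ref{minimaltameistame}, which produces an $r$-tame minimal model $F:P_\infty\to P$ of the operad $P$, and Theorem~\ref{existenceminimal}, which produces Sullivan $r$-minimal models of algebras over an $r$-tame operad. The subtlety is that the minimal model of the algebra must be taken over $P_\infty$, not over $P$, so the two constructions have to be composed correctly inside the category $\Alg$.

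First I would invoke Proposition~\ref{minimaltameistame}: since $P$ is reduced and $r$-tame, it has a minimal model $F:P_\infty\to P$ with $P_\infty$ an $r$-tame minimal operad. In particular $F$ is a quasi-isomorphism of operads, hence $(F,\id):(P_\infty,F^*A)\to(P,A)$ is a morphism in $\Alg$; it is a quasi-isomorphism of algebras over variable operads because $F$ is a quasi-isomorphism of operads and $\id:F^*A\to F^*A$ is trivially a quasi-isomorphism of $P_\infty$-algebras. Next, I would check that $F^*A$ is an $r$-connected $P_\infty$-algebra: its underlying cochain complex is that of $A$, so its cohomology is unchanged, and the unit of $P_\infty$ maps to the unit of $A$ via $F$, so $r$-connectedness of $A$ transfers to $F^*A$ (using $P_\infty(0)=P(0)$, which holds by construction of the minimal model, or more precisely that the unit map induces the same iso on $H^0$).

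Now I would apply Theorem~\ref{existenceminimal} to the $r$-tame operad $P_\infty$ and the $r$-connected $P_\infty$-algebra $F^*A$: this yields a Sullivan $r$-minimal $P_\infty$-algebra $\M$ together with a quasi-isomorphism of $P_\infty$-algebras $f:\M\to F^*A$. The pair $(P_\infty,\M)$ is then a Sullivan $r$-minimal algebra in the sense of the definition above, since $P_\infty$ is an $r$-tame minimal operad and $\M$ is a Sullivan $r$-minimal $P_\infty$-algebra. Finally, the composite
\[
(P_\infty,\M)\xrightarrow{(\id,f)}(P_\infty,F^*A)\xrightarrow{(F,\id)}(P,A)
\]
is, by the composition rule in $\Alg$, the morphism $(F,f):(P_\infty,\M)\to(P,A)$; it is a quasi-isomorphism in $\Alg$ because $F$ is a quasi-isomorphism of operads and $f$ is a quasi-isomorphism of $P_\infty$-algebras. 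Hence $(F,f)$ is the desired Sullivan $r$-minimal model of $(P,A)$.

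The main obstacle, such as it is, is bookkeeping rather than anything deep: one must be careful that the notion of ``Sullivan $r$-minimal algebra'' in $\Alg$ requires the operad part to be simultaneously \emph{minimal} and \emph{$r$-tame}, and it is precisely Proposition~\ref{minimaltameistame} that guarantees both can be achieved at once; without $r$-tameness of $P_\infty$ we could not feed $F^*A$ into Theorem~\ref{existenceminimal}. One should also double-check that $F^*$ preserves $r$-connectedness, which is immediate from the fact (recorded in Section~\ref{functorial}) that restriction of scalars leaves the underlying complex untouched, together with $P_\infty(0)=P(0)$. No homotopy-theoretic input is needed for existence; uniqueness in $\Alg$ would be a separate statement using Proposition~\ref{bijeccioalgs}.
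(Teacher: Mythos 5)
Your proposal is correct and follows essentially the same route as the paper: take the minimal model $F:P_\infty\to P$ of the operad (existence is from Theorem~3.125 of \cite{MSS}, since $P(1)=\mk$; Proposition~\ref{minimaltameistame} only supplies its $r$-tameness), observe that $F^*(A)$ is $r$-connected, apply Theorem~\ref{existenceminimal} over $P_\infty$, and conclude that $(F,f):(P_\infty,\M)\to(P,A)$ is the desired Sullivan $r$-minimal model. The only nuance is the attribution just noted; your explicit factorization through $(P_\infty,F^*A)$ is just a slightly more detailed bookkeeping of the same argument.
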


\begin{proof}
By Theorem 3.125 of \cite{MSS}, every reduced operad $P\in\Op$ with $H(P)(1)=\mk$ has a minimal model $F:P_\infty\to P$.
Since $P(1)=\mk$ this hypothesis is clearly satisfied.
Furthermore, $P_\infty$ is $r$-tame by Proposition $\ref{minimaltameistame}$.
Since $F^*(A)$ is an $r$-connected $P_\infty$-algebra, by Theorem $\ref{existenceminimal}$
there is a Sullivan minimal $P_\infty$-algebra $\M$ together with a quasi-isomorphism $f:\M\to F^*(A)$.
The morphism $(F,f):(P_\infty,\M)\to (P,A)$ is a Sullivan $r$-minimal model of $(P,A)$.
\end{proof}

\begin{lemma}\label{quisisoAlgvar}
Let $(F,f):(P_\infty,\M)\to (P_\infty',\M')$ be a quasi-isomorphism
of Sullivan $r$-minimal algebras. Then $(F,f)$ is an isomorphism.
\end{lemma}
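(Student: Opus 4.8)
The plan is to reduce the statement to the two analogous results we already have for the fiber (Sullivan $P$-algebras) and for the base (Sullivan operads). Recall that a quasi-isomorphism $(F,f):(P_\infty,\M)\to(P_\infty',\M')$ in $\Alg$ consists of a quasi-isomorphism of operads $F:P_\infty\to P_\infty'$ together with a quasi-isomorphism of $P_\infty$-algebras $f:\M\to F^*(\M')$. We want to show both $F$ and $f$ are isomorphisms.

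First I would treat the base. Since both $P_\infty$ and $P_\infty'$ are minimal operads and $F$ is a quasi-isomorphism between them, the operadic analogue of Lemma $\ref{quisesiso}$ applies: a quasi-isomorphism between minimal operads is an isomorphism (this is the classical fact underlying Theorem 3.125 of \cite{MSS}; alternatively one mimics Lemmas $\ref{seccio}$ and $\ref{quisesiso}$ in the operad setting using principal extensions in place of KS-extensions). Hence $F$ is an isomorphism of operads, and we may identify $P_\infty'$ with $P_\infty$ along $F$; under this identification $F^*$ becomes the identity functor, so $f:\M\to\M'$ is a quasi-isomorphism of $P_\infty$-algebras.

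Now both $\M$ and $\M'$ are Sullivan $r$-minimal $P_\infty$-algebras over the same $r$-tame operad $P_\infty$ (recall $P_\infty$ is $r$-tame by Proposition $\ref{minimaltameistame}$, since it is the minimal model of a reduced $r$-tame operad). Therefore Lemma $\ref{quisesiso}$ applies directly and shows $f$ is an isomorphism of $P_\infty$-algebras. Combining the two, $(F,f)=(F,f)$ has both components isomorphisms, and since composition in $\Alg$ is componentwise (up to the restriction-of-scalars bookkeeping, which is harmless once $F$ is invertible), the inverse $(F^{-1}, (F^{-1})^*(f^{-1}))$ — or more plainly $(F^{-1},f^{-1})$ after the identification — is a two-sided inverse. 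Hence $(F,f)$ is an isomorphism in $\Alg$.

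The main obstacle I anticipate is purely organizational rather than mathematical: one must be careful with the restriction-of-scalars functors when inverting $F$, to check that $f^{-1}:F^*(\M')\to\M$ really is a morphism of $P_\infty$-algebras and that it assembles with $F^{-1}$ into a morphism in $\Alg$ inverting $(F,f)$. Concretely, from $F^*(w)$ being a surjective quasi-isomorphism whenever $w$ is (as noted in Section $\ref{functorial}$), and from the identity $G^*W^*=(WG)^*$, one checks that applying $F^{-1}$ intertwines everything correctly; this is the same kind of verification already carried out in the proof of Proposition $\ref{liftingvariables}$. No new estimates on arity-degree ranges are needed here, since the tameness hypothesis has already been used to guarantee that $P_\infty$ is $r$-tame, which is all that Lemma $\ref{quisesiso}$ requires.
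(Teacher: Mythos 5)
Your proof is correct and follows essentially the same route as the paper: invoke the classical fact that a quasi-isomorphism of minimal operads is an isomorphism (the paper cites Theorem 3.119 of \cite{MSS}), transport $\M'$ along $F$ so that $F^*\M'$ is again a Sullivan $r$-minimal $P_\infty$-algebra, and conclude with Lemma $\ref{quisesiso}$. The only cosmetic difference is that you re-derive the $r$-tameness of $P_\infty$ via Proposition $\ref{minimaltameistame}$, whereas it is already part of the definition of a Sullivan $r$-minimal algebra.
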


\begin{proof}
Since $F:P_\infty\to P_\infty'$ is a quasi-isomorphism of minimal operads, it is an isomorphism
(see Theorem 3.119. of \cite{MSS}). Therefore $F^*$ preserves Sullivan minimal algebras and
hence $f : \M \to F^*\M'$ is also an isomorphism.
\end{proof}

\begin{remark}
Note that Propodition $\ref{bijeccioalgs}$ together with Lemma $\ref{quisisoAlgvar}$
make Sullivan minimal algebras in $\Alg$,
\textit{minimal} in an abstract categorical sense (c.f. \cite{Roig3}, \cite{Roig4}, \cite{GNPR10}).
\end{remark}

\begin{theorem}
Let $A$ be an $r$-connected $P$-algebra.
Let \[(F,f):(P_\infty,\M)\to (P,A)\text{ and }(F',f'):(P'_\infty,\M')\to (P,A)\]
be two Sullivan $r$-minimal models of $(P,A)$.
Then there is an isomorphism \[(G,g):(P_\infty,\M)\to (P_\infty',\M'),\] unique up to homotopy,
such that $(F',f')\circ (G,g)\simeq (F,f)$.
\end{theorem}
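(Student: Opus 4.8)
The plan is to mirror, in the fibred category $\Alg$, the short argument used for Theorem~\ref{unicitat}, replacing Proposition~\ref{bijecciohomotopies} and Lemma~\ref{quisesiso} by their variable-operad counterparts Proposition~\ref{bijeccioalgs} and Lemma~\ref{quisisoAlgvar}.

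First I would observe that $(P_\infty,\M)$, being a Sullivan $r$-minimal algebra, is in particular a Sullivan algebra, so Proposition~\ref{bijeccioalgs} applies to the quasi-isomorphism $(F',f'):(P'_\infty,\M')\to(P,A)$ and yields a bijection
\[
(F',f')_*:[(P_\infty,\M),(P'_\infty,\M')]\stackrel{\sim}{\lra}[(P_\infty,\M),(P,A)].
\]
The class $[(F,f)]$ thus has a unique preimage; that is, there is a morphism $(G,g):(P_\infty,\M)\to(P'_\infty,\M')$, unique up to homotopy, with $(F',f')\circ(G,g)\simeq(F,f)$. This already settles the homotopy-uniqueness part of the statement, and it only remains to see that $(G,g)$ is an isomorphism.

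Next I would check that $(G,g)$ is a quasi-isomorphism, so that Lemma~\ref{quisisoAlgvar} applies. Unwinding the composition $(F',f')\circ(G,g)=(F'\circ G,\,G^*(f')\circ g)$, the homotopy $(F',f')\circ(G,g)\simeq(F,f)$ restricts on operads to $F'\circ G\simeq F$; since homotopic morphisms induce the same map in cohomology and $F,F'$ are quasi-isomorphisms, the two-out-of-three property shows that $G:P_\infty\to P'_\infty$ is a quasi-isomorphism of minimal operads, hence an isomorphism by Theorem~3.119 of~\cite{MSS}. On the level of algebras I would use that restriction of scalars leaves underlying cochain complexes unchanged, so that the underlying morphism of $(F',f')\circ(G,g)$ is, under this identification, $f'\circ g$; it is homotopic to the underlying morphism of $(F,f)$, which is a quasi-isomorphism, and $f'$ is a quasi-isomorphism, so $g:\M\to G^*(\M')$ is a quasi-isomorphism of $P_\infty$-algebras. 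Hence $(G,g)$ is a quasi-isomorphism of Sullivan $r$-minimal algebras, and Lemma~\ref{quisisoAlgvar} finishes the proof.

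I do not expect a genuine obstacle here: the only points requiring care are the bookkeeping with the fibred structure — the formula for composition in $\Alg$ and the description of the functorial path of Definition~\ref{pathAlg} — together with the standard fact, already used implicitly in Section~\ref{Sectionhomotopy} and in the proof of Theorem~\ref{unicitat}, that homotopic morphisms of $P$-algebras (resp.\ of operads) induce the same morphism in cohomology. Everything else is a line-by-line transcription of the $\Com$-algebra argument.
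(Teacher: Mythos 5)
Your argument is correct and is essentially the paper's own proof: apply Proposition~\ref{bijeccioalgs} to get $(G,g)$ unique up to homotopy with $(F',f')\circ(G,g)\simeq(F,f)$, then conclude that $(G,g)$ is an isomorphism via the rigidity of Sullivan $r$-minimal algebras (Lemma~\ref{quisisoAlgvar}; the paper's citation of Lemma~\ref{quisesiso} there is a typo). Your extra paragraph verifying that $(G,g)$ is a quasi-isomorphism, via homotopy invariance of cohomology and two-out-of-three on each component, only makes explicit a step the paper leaves implicit.
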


\begin{proof}
By Proposition $\ref{bijeccioalgs}$ we obtain $(G,g)$, uniquely defined up to homotopy, such that $(F,f)\circ (G,g)\simeq (F',f')$.
By Lemma $\ref{quisesiso}$, $(G,g)$ is an isomorphism.
\end{proof}

Denote by $\Alg^r$ the category whose objects are pairs $(P,A)$ where $P$ is a reduced
$r$-tame operad and $A$ is an $r$-connected $P$-algebra
and by $\Ho(\Alg^r)$ the localized category with respect to quasi-isomorphisms.
Also, let $\mathbf{SMin}^r/\!\simeq $ denote the category of Sullivan $r$-minimal algebras, whose morphisms are homotopy classes of morphisms in $\Alg$.
We have:
\begin{corollary}
The category $\Alg^r$ is a Sullivan category in the sense of \cite{GNPR10}. In particular, the inclusion of minimal algebras induces an equivalence of categories
\[
\mathbf{SMin}^r/\!\simeq \stackrel{\sim}{\lra}
\Ho(\Alg^r) \ .
\]
\end{corollary}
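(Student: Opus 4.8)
The plan is to transport the argument of Corollary~$\ref{CEAlgsP}$ from the category $\Alg_P$ of algebras over a fixed operad to the fibred category $\Alg$. Concretely, I would check that the category $\Alg^r$, together with the class of quasi-isomorphisms, the full subcategory $\mathbf{SMin}^r$ of Sullivan $r$-minimal algebras, and the functorial path of Definition~$\ref{pathAlg}$, satisfies the axioms of a Sullivan category in the sense of \cite{GNPR10}: a functorial path inducing a homotopy relation; a weak equivalence from a minimal object to each object; the homotopy relation being an equivalence relation on morphisms out of minimal objects; weak equivalences acting bijectively on homotopy classes of morphisms out of minimal objects; and weak equivalences between minimal objects being isomorphisms. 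Along the way one also verifies the routine point that $\mathbf{SMin}^r$ is indeed a full subcategory of $\Alg^r$: a Sullivan $r$-minimal algebra $(P_\infty,\M)$ has $P_\infty$ reduced (a minimal operad has $P_\infty(0)=0$) and $r$-tame, while $\M$ is $r$-connected by Lemma~$\ref{positives}$.

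First I would note that every one of these ingredients is already at hand in this section. The functorial path is Definition~$\ref{pathAlg}$, and the homotopy relation it induces is an equivalence relation on morphisms whose source is a Sullivan algebra, by the Proposition of this section asserting exactly this (whose proof follows Proposition~$\ref{transitive}$). The existence of a Sullivan $r$-minimal model for every $(P,A)\in\Alg^r$ is the existence theorem above. The bijectivity of $(W,w)_*$ on homotopy classes of morphisms out of a Sullivan algebra is Proposition~$\ref{bijeccioalgs}$, and the fact that a quasi-isomorphism of Sullivan $r$-minimal algebras is an isomorphism is Lemma~$\ref{quisisoAlgvar}$; both of these ultimately rest on the lifting property of Proposition~$\ref{liftingvariables}$. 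With these in place, $\Alg^r$ is a Sullivan category essentially by unwinding the definition in \cite{GNPR10}.

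Next I would deduce the equivalence of categories, exactly as in Corollary~$\ref{CEAlgsP}$. Essential surjectivity of the inclusion $\mathbf{SMin}^r/\!\simeq\ \to\ \Ho(\Alg^r)$ is immediate from the existence of minimal models. For a quasi-inverse, I would choose for each $(P,A)\in\Alg^r$ a Sullivan $r$-minimal model $\rho_{(P,A)}\colon(P_\infty,\M_{(P,A)})\to(P,A)$ and send a morphism $\varphi$ in $\Ho(\Alg^r)$ to the unique homotopy class of morphisms $(P_\infty,\M_{(P,A)})\to(Q_\infty,\M_{(Q,B)})$ whose image under $(\rho_{(Q,B)})_*$ is $\varphi\circ\rho_{(P,A)}$; Proposition~$\ref{bijeccioalgs}$ supplies both the existence and the uniqueness of this class. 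Functoriality and independence of the chosen models follow from the same uniqueness statement, the maps $\rho$ assemble into a natural isomorphism identifying one of the two composites with $\id_{\Ho(\Alg^r)}$, and the uniqueness theorem for minimal models handles the other composite; in particular the inclusion is fully faithful because $[(P_\infty,\M_{(P,A)}),(Q_\infty,\M_{(Q,B)})]\xrightarrow{\ \sim\ }\Ho(\Alg^r)\big((P_\infty,\M_{(P,A)}),(Q_\infty,\M_{(Q,B)})\big)$ by Proposition~$\ref{bijeccioalgs}$.

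The only step that requires any care — the ``hard part'' — is the verification that the assignment $(P,A)\mapsto(P_\infty,\M_{(P,A)})$ descends to a well-defined functor on homotopy categories, independent of the auxiliary choices and compatible with composition. But this is precisely the bookkeeping already performed for Corollary~$\ref{CEAlgsP}$, and it goes through unchanged in $\Alg$: the two facts it uses, namely uniqueness up to homotopy of lifts (Proposition~$\ref{bijeccioalgs}$) and two-out-of-three for quasi-isomorphisms, are available here. So I expect no genuinely new obstacle; the corollary is a repackaging of the results of this section into the framework of \cite{GNPR10}.
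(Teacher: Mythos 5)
Your proposal is correct and follows essentially the same route as the paper, which proves this corollary exactly as Corollary~\ref{CEAlgsP}: the existence and uniqueness theorems for Sullivan $r$-minimal models in $\Alg$, together with Proposition~\ref{bijeccioalgs} and Lemma~\ref{quisisoAlgvar}, show that the choice of a minimal model $(P,A)\mapsto(P_\infty,\M)$ yields a well-defined quasi-inverse to the functor induced by the inclusion. Your extra bookkeeping (verifying the Sullivan-category axioms of \cite{GNPR10} and the full faithfulness via $[(P_\infty,\M),(Q_\infty,\M')]\cong\Ho(\Alg^r)$) is exactly the unwinding the paper leaves implicit.
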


\section{Chain operad algebras and one example}\label{SecChain}
In this section, we verify that our results are also valid
for chain operad algebras, i.e., algebras over operads in the category of
chain complexes of $\mk$-vector spaces (with homological grading).

Note that the proofs of Sections $\ref{Sectionhomotopy}$, $\ref{SecUniqueness}$ and $\ref{SecVariable}$ don't depend on
any specific behavior of the degree of differentials.
In particular, all statements and proofs admit automatic translations to the
chain setting just by replacing the word cochain by the word chain everywhere in the text,
together with the following minor changes:

\begin{enumerate}
 \item [(1)] In the definition $\ref{defKS}$ of a KS-extension
of a free $P$-algebra $A$ by a graded vector space $V'$ of degree $n$, the linear map is $d:V'\to Z_{n-1}(A)$ (instead of $Z^{n+1})$.
\item [(2)] The cone of a morphism $f:A\to B$ is in the chain setting given by
$C(f)_n=A_{n-1}\oplus B_n$ with $d(a,b)=(-da,db-f(a))$.
\item[(3)] In the definition of the algebra $\mk [t,dt]$, $dt$ has degree $-1$.
\end{enumerate}

We next revise the construction of Sullivan minimal models of Section $\ref{SecMinimals}$.
Let us remark that in the chain setting, we keep the same definition
of $r$-tame operad as in Definition $\ref{defrtame}$.
We also keep the same definition of Sullivan minimal $P$-algebra as a
colimit of a sequence of KS-extensions ordered by non-decreasing degrees.
Note that the key Lemmas $\ref{positives}$ and $\ref{comptes}$ are still valid in the chain setting, since
neither the statements nor the proofs involve any differentials.

\begin{theorem}\label{existenceminimalchain} Let $P$ be an $r$-tame operad
in chain complexes (with homological degree).
Then every $r$-connected $P$-algebra $A$ has a Sullivan $r$-minimal model
$f:\M\to A$ with  $\M_0=P(0)$ and $\M_i=0$ for all $i<r$ with $i\neq 0$.
Furthermore, if $A$ is $(r+1)$-connected and $H_*(A)$ is of finite type, then $\M$ is of finite type.
\end{theorem}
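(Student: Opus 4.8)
The plan is to reduce Theorem~\ref{existenceminimalchain} to Theorem~\ref{existenceminimal} by a purely formal translation of the cochain argument into the homological setting. First I would observe that the inductive machinery of the proof of Theorem~\ref{existenceminimal} uses only three ingredients: the universal property of KS-extensions (Lemma~\ref{univ_morfisme}), the cohomological bookkeeping provided by Lemmas~\ref{positives} and~\ref{comptes}, and the long exact sequence of a cone together with the five lemma. As remarked just before the statement, Lemmas~\ref{positives} and~\ref{comptes} are statements about graded vector spaces and free $P$-algebras with no reference to a differential, so they hold verbatim once ``degree $k$'' is read homologically. The cone and its long exact sequence are available with the sign conventions of item~(2) above, and the five lemma is insensitive to the direction of the grading. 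Hence the entire inductive construction goes through.

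Concretely, I would run the same induction on $n\geq 0$, building free $P$-algebras $\M[n]$ with morphisms $f_n:\M[n]\to A$ satisfying: ($a_n$) $\M[n]$ is Sullivan $r$-minimal and is either $\M[n-1]$ or a composition of degree-$n$ KS-extensions of it, with $f_n$ extending $f_{n-1}$; and ($b_n$) $H_if_n$ is an isomorphism for $i\leq n$ and a monomorphism for $i=n+1$. The base case uses $\M[0]=P(0)$, and $r$-connectedness forces $\M[i]=\M[0]$ for $0<i\leq r$. In the inductive step I would set $V[n,0]:=H_n(C(f_{n-1}))$, choose a section of the projection from $Z_{n-1}(C(f_{n-1}))$ (note the homological cone differential lands one degree below), obtain a differential $d:V[n,0]\to Z_{n-1}(\M[n-1])$ and a linear map $\varphi:V[n,0]\to A_n$ with $d\varphi=f_{n-1}d$, and form $\M[n,0]=\M[n-1]\sqcup_d\foa{V[n,0]}$. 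Lemma~\ref{positives} keeps this $r$-connected and Lemma~\ref{comptes}(1) controls the low-degree pieces, so $H_if_{n,0}$ is an isomorphism for $i<n$; then the cone comparison $(\id,f_{n,0}):C(j_0)\to C(f_{n-1})$ is an isomorphism in degree $n$ and the five lemma applied to the long exact sequences of $C(j_0)$ and $C(f_{n-1})$ upgrades $H_nf_{n,0}$ to an isomorphism. Iterating the kernel-killing step with $V[n,i]=\ker(H_{n+1}f_{n,i-1})$ yields $\M[n]$ and $f_n$ with $\ker(H_{n+1}f_n)=0$, which is ($b_n$). The colimit $f:\M=\bigcup_n\M[n]\to A$ is then the desired quasi-isomorphism, and ($a_n$) forces $\M_0=P(0)$ and $\M_i=0$ for $i<r$, $i\neq 0$. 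The finite-type addendum is identical: if $A$ is $(r+1)$-connected, take $\M[r+1]=\M[r]$, and for $n>r+1$ Lemma~\ref{comptes}(2) gives $\M[n,0]_{n+1}=\M[n-1]_{n+1}$, so the kernel-killing step is vacuous and $\M[n]=\M[n,0]=\M[n-1]\sqcup_d\foa{V[n,0]}$ with $V[n,0]\cong H_n(C(f_{n-1}))$ finite-dimensional when $H_*(A)$ has finite type.

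The only point requiring genuine care — and hence the main obstacle, modest as it is — is verifying that the sign conventions of the homological cone in item~(2) are compatible with the algebraic manipulations: specifically, that the image of $(d,\beta)$ lands in the correct cycle subspace of the homological cone, that $(\id,f_{n,0})$ is a chain map for the convention $d(a,b)=(-da,db-f(a))$, and that the connecting maps in the homological long exact sequence of a cone point the way the five-lemma argument needs. Since all of these are standard and the homological cone of $f:A\to B$ fits into the short exact sequence $0\to B\to C(f)\to A[-1]\to 0$ just as in the cochain case, no new idea is involved; one simply checks signs once and for all. I would therefore state Theorem~\ref{existenceminimalchain} and remark that its proof is obtained from that of Theorem~\ref{existenceminimal} by the dictionary above, spelling out only the cone-sign verification if desired, and similarly deduce the chain analogues of the corollaries for $\Ass$, $\Com$, $\Lie$, their $\infty$-versions, and $\Ger$ (which, as noted, becomes $0$-tame in the homological convention, so that $0$-connected $\Ger$-algebras already admit Sullivan minimal models).
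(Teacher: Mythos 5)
Your translation is not purely formal, and the specific induction you propose breaks in the homological setting. With homological grading, a degree-$n$ KS-extension has $d\colon V'\to Z_{n-1}$, so attaching degree-$n$ generators can only kill classes in $H_{n-1}$ and create classes in $H_n$; it can never force $H_{n+1}f$ to be injective. Correspondingly, the chain cone gives a short exact sequence $0\to\operatorname{coker}(H_nf)\to H_n(C(f))\to\ker(H_{n-1}f)\to 0$, not the cochain-style mixture of $\operatorname{coker}(H^nf)$ and $\ker(H^{n+1}f)$. So under your inductive hypothesis ($H_if_{n-1}$ iso for $i\le n-1$, mono for $i=n$), $V[n,0]=H_n(C(f_{n-1}))$ is only the cokernel of $H_nf_{n-1}$, and your iteration ``$V[n,i]=\ker(H_{n+1}f_{n,i-1})$ attached as degree-$n$ extensions'' is impossible: killing a class of $H_{n+1}$ requires a generator of degree $n+2$, and attaching those at stage $n$ destroys the non-decreasing ordering of degrees that Sullivan minimality demands once stage $n+1$ attaches degree-$(n+1)$ generators. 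The five-lemma step also fails to transpose verbatim: the outer term that was $H^{n-1}(C(j_0))=0$ in the cochain ladder becomes $H_{n+1}(C(j_0))\to H_{n+1}(C(f_{n-1}))\cong\operatorname{coker}(H_{n+1}f_{n-1})$ in homological indexing, and the surjectivity the five lemma needs at that spot is exactly what has not yet been arranged. Even the innocuous claim $H_if_{n,0}=H_if_{n-1}$ for $i<n$ needs an argument here, since degree-$n$ generators with nontrivial differential create new boundaries in degree $n-1$, a phenomenon absent in the cochain case.

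The paper's proof of Theorem~\ref{existenceminimalchain} is accordingly organized differently, following Neisendorfer: the hypothesis carried through the induction is that $H_if$ is an isomorphism below the current degree and an \emph{epimorphism} at it (not a monomorphism one degree above); kernels are killed one degree later by KS-extensions with nontrivial differential, while cokernels are hit by KS-extensions with trivial differential; and, because these extensions are not produced in non-decreasing order of degree, an additional permutation argument (based on Lemma~\ref{comptes}, showing that the differentials of the kernel-killing generators do not involve the trivially-attached generators of higher degree) is needed before one can conclude that the colimit is a Sullivan $r$-minimal algebra. This reorganization of the induction and the reordering step are the actual content of the chain proof; they are missing from your proposal, which cannot be repaired by checking cone signs alone.
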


\begin{proof}
The proof is analogous to that of Theorem $\ref{existenceminimal}$ with minor changes, as done by Neisendorfer
in \cite{Ne} in the case of chain Lie algebras. Let $\M[0]=H_0(A)$ and define $f_0:\M[0]\to A$ by taking a section of the projection $Z_0(A)\twoheadrightarrow H_0(A)$.
Then $H_if_0$ is trivially an isomorphism for $i<0$ and an epimorphism for $i=0$.

Assuming we have constructed $f_{n-1} : \M[n-1]\to A$ with $\M[n-1]$ a Sullivan minimal $P$-algebra
generated in degrees $<n$ and $f_{n-1}$ a morphism such that $H_if_{n-1}$ is an isomorphism for $i<n-1$ and an epimorphism for $i=n-1$,
we build $\M[n]$ in two steps:

\begin{enumerate}
 \item [$(1_n)$] The map $f_n':\M[n]'\to A$ is obtained from $f_{n-1}:\M[n-1]\to A$ after
 killing the kernel of $f_{n-1}$ in degree $n$. This is done by successively attaching KS-extensions
 of degree $n-1$ (in the $(r+1)$-connected case, only one KS-extension is needed).
\item [$(2_n)$] The map $f_n:\M[n]\to A$ is obtained from $f_n':\M[n]'\to A$ after
killing the cokernel of $f_n'$ in dimension $n+1$. This is done by a attaching KS-extension of degree $n+1$ with trivial differential.
\end{enumerate}

Now, the resulting Sullivan $P$-algebra $\M=\bigcup_n\M[n]$ is not minimal, since KS-extensions are not ordered by degree. We next show that steps $(2_n)$ and $(1_{n+1})$ can  be permuted. Consider the sequence
\[
\cdots\to \M[n]'\to \M[n]=\M[n]'\sqcup_0\foa{U_{n+1}}\to \M[n+1]'=\M[n]\sqcup_d\foa{V_n}\to\cdots \ .
\]
Since the differential on $U_{n+1}$ is trivial, it suffices to show that
\[
d:V_n\to Z_{n-1}(\M[n-1]\setminus \foa{U_{n+1}}) \ .
\]
This is a direct consequence of the fact that $\M[n+1]_{n-1}=\M[n]'_{n-1}$, by Lemma $\ref{comptes}$.
\end{proof}

\medskip

To end the paper, we compute the minimal model of the chains of some double loop spaces $C_*\Omega^2X$ as $\Ger_\infty$-algebras. We will use the the following result, valid for either the chain or cochain setting:

\begin{proposition}\label{modelfromhomology}
Let $P$ be an operad with zero differential, $P_\infty $ its minimal model, $A$ a $P_\infty$-algebra such that $HA = P\langle V \rangle$ is a free $P$-algebra, also with zero differential. Then $P_\infty \langle V \rangle$ is the minimal model of $A$ as a $P_\infty$-algebra.
\end{proposition}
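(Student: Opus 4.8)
The plan is to build a quasi-isomorphism $P_\infty\langle V\rangle \to A$ of $P_\infty$-algebras and then check that $P_\infty\langle V\rangle$, equipped with its $P_\infty$-structure, is a Sullivan minimal $P_\infty$-algebra. First I would note that the minimal model $F:P_\infty \to P$ makes any $P$-algebra into a $P_\infty$-algebra by restriction of scalars; in particular $HA = P\langle V\rangle$ becomes a $P_\infty$-algebra with zero differential, and since $HA$ is free as a $P$-algebra on $V$, the underlying graded vector space of $F^*(HA)$ is $P\langle V\rangle$. On the other hand, $P_\infty\langle V\rangle$ (the free $P_\infty$-algebra on $V$, with its canonical differential, which since $V$ has zero differential is the derivation extending $0$, i.e.\ comes entirely from the internal differential of $P_\infty$) maps to $F^*(HA)$: the map $F:P_\infty\to P$ induces a morphism of $P_\infty$-algebras $F_V:P_\infty\langle V\rangle \to P\langle V\rangle = F^*(HA)$, which on arity-$m$ summands is $F(m)\otimes_{\Sigma_m}\id_{V^{\otimes m}}$. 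Since $F$ is a quasi-isomorphism of operads and we work over a field of characteristic zero (so taking $\Sigma_m$-invariants is exact and $-\otimes V^{\otimes m}$ is exact), $F_V$ is a quasi-isomorphism of complexes. Thus $P_\infty\langle V\rangle \simeq F^*(HA)$ as $P_\infty$-algebras.

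Next I would lift this to a quasi-isomorphism onto $A$ itself. Here I would use that $A$ has zero differential is \emph{not} assumed — rather $A$ is an arbitrary $P_\infty$-algebra with $HA = P\langle V\rangle$ — so one picks a quasi-isomorphism relating $A$ and its homology. Concretely, since $P_\infty\langle V\rangle$ is cofibrant (it is a Sullivan $P_\infty$-algebra once we verify minimality; see below) and since there is a zig-zag of quasi-isomorphisms $A \leftarrow \bullet \to HA$ of $P_\infty$-algebras — or more simply, by invoking Proposition~\ref{bijecciohomotopies}, every quasi-isomorphism out of $A$ induces a bijection on homotopy classes of maps from a Sullivan algebra — one transports the quasi-isomorphism $P_\infty\langle V\rangle \to F^*(HA)$ to a quasi-isomorphism $P_\infty\langle V\rangle \to A$ of $P_\infty$-algebras. (If one prefers to avoid the homotopy-transfer zig-zag, one can instead use Theorem~\ref{existenceminimal}: $A$ has \emph{some} Sullivan $r$-minimal model, and by Theorem~\ref{unicitat} it is unique up to isomorphism, so it suffices to exhibit $P_\infty\langle V\rangle$ as \emph{a} Sullivan minimal model, which is what the rest of the argument does.)

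It then remains to check that $P_\infty\langle V\rangle$ is genuinely a Sullivan minimal $P_\infty$-algebra, i.e.\ that it is obtained as a colimit of KS-extensions ordered by non-decreasing degree, starting from $P_\infty(0)$. I would do this by filtering $V = \bigoplus_i V^i$ by degree: set $V_{\leq k} = \bigoplus_{i\leq k} V^i$ and observe that $P_\infty\langle V\rangle = \bigcup_k P_\infty\langle V_{\leq k}\rangle$, and that passing from $P_\infty\langle V_{\leq k-1}\rangle$ to $P_\infty\langle V_{\leq k}\rangle$ is a KS-extension of degree $k$: one must check that the differential sends $V^k$ into the cocycles of $P_\infty\langle V_{\leq k-1}\rangle$ of the appropriate degree. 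But the differential of $P_\infty\langle V\rangle$ restricted to a generator $v\in V^k$ is purely the piece coming from the internal differential of $P_\infty$ applied inside the summands $P_\infty(m)\otimes_{\Sigma_m} V^{\otimes m}$ containing $v$; since $V$ has zero differential and $P_\infty$ is a minimal operad (decomposable differential), $d v$ lands in a sum of terms built from operations of arity $\geq 2$ applied to lower-degree generators, hence in $P_\infty\langle V_{\leq k-1}\rangle$, and $d^2=0$ shows these are cocycles. (For the degree bookkeeping — that $dv$ really has degree $k$ in the homological/cohomological convention and that no lower-degree generators are needed beyond those already present — one invokes $r$-tameness of $P_\infty$, which holds by Proposition~\ref{minimaltameistame}, exactly as in Lemmas~\ref{positives} and~\ref{comptes}.) Finally, minimality of the differential of $P_\infty\langle V\rangle$ as a $P_\infty$-algebra (decomposability) follows because $dv$ is a sum of arity-$\geq 2$ operations applied to generators, with no linear ($\id\in P_\infty(1)$) term, precisely since $V$ carries zero differential.

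The main obstacle I anticipate is \emph{not} the homotopy-theoretic transport but the bookkeeping in the last paragraph: one must be careful that the canonical differential on $P_\infty\langle V\rangle$ — the unique $P_\infty$-derivation extending the zero map on $V$ — really does preserve the degree filtration by $V_{\leq k}$ and produces a bona fide sequence of KS-extensions in \emph{non-decreasing} degree order, rather than some extension whose differential mixes degrees in a way incompatible with Definition~\ref{defSullmin}. This is where $r$-tameness does the real work, just as it did in the proof of Theorem~\ref{existenceminimal}, and I would model the argument closely on Lemma~\ref{comptes}.
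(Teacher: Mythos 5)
There is a genuine gap at the central step: the construction of a quasi-isomorphism $P_\infty\langle V\rangle\to A$ of $P_\infty$-algebras. Your argument produces a quasi-isomorphism $F_V:P_\infty\langle V\rangle\to F^*(HA)$ (this Künneth-type computation in characteristic zero is essentially the homology computation the paper also makes), but then you ``transport'' it to $A$ by asserting a zig-zag of quasi-isomorphisms of $P_\infty$-algebras between $A$ and $HA$, or by appealing to Proposition~\ref{bijecciohomotopies}. Neither is available: Proposition~\ref{bijecciohomotopies} requires an actual quasi-isomorphism between $A$ and $F^*(HA)$ to lift along, and in general there is no strict morphism of $P_\infty$-algebras in either direction between an algebra and its homology; the existence of such a zig-zag is precisely a formality statement, which is a \emph{consequence} of the proposition being proved (it holds here only because $HA$ is free), so invoking it is circular. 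The fallback via Theorems~\ref{existenceminimal} and~\ref{unicitat} does not repair this: exhibiting $P_\infty\langle V\rangle$ as a Sullivan minimal \emph{algebra} with the right homology is not enough, since two Sullivan minimal algebras with isomorphic homology need not be isomorphic (formal versus non-formal examples already in the $\Com$ case). The paper's proof supplies exactly the missing ingredient: choose a $\mk$-linear section $s:V\to ZA$ of the projection $ZA\twoheadrightarrow HA$ over the generators $V\subset \foa{V}=HA$, compose with $ZA\hookrightarrow A$, and extend by the universal property of the free algebra to a strict morphism $\rho:P_\infty\langle V\rangle\to A$; the homology computation then shows $H\rho$ is an isomorphism of free $P$-algebras because it is the identity on generators.

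A secondary inaccuracy: your verification that $P_\infty\langle V\rangle$ is Sullivan minimal misdescribes its differential. Since $P_\infty(1)=\mk\cdot\id$ with $d(\id)=0$ and $V$ carries zero differential, a generator $v\in V$ satisfies $dv=0$ in $P_\infty\langle V\rangle$; the differential acts only through the internal differential of $P_\infty$ on the operadic factors $P_\infty(m)$, not by sending generators to decomposable expressions in lower-degree generators. The conclusion you want (the filtration by generator degree exhibits $P_\infty\langle V\rangle$ as a colimit of KS-extensions with zero attaching maps, ordered by degree) is true and in fact trivial, so no appeal to $r$-tameness or to Lemma~\ref{comptes} is needed there; but as written your justification rests on a wrong picture of where $dv$ lands.
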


\begin{proof}
We have a $\mk$-linear lifting $s$,
\[
\xymatrix{
{}          & {}  &   {ZA} \ar[r] \ar@{->>}[d] &  {A}   \\
{V}\ar[r] \ar@{.>}[rru]^{s}  & {\foa{V}}  \ar@{=}[r]&  {HA}
}.
\]
Composed with the natural inclusion $ZA \hookrightarrow A$, $s$ will induce a morphism of $P_\infty$-algebras $\rho : P_\infty \langle V \rangle \longrightarrow A$, which is a quasi-isomorphism, since

\begin{align*}
  H(\alglibre{P_\infty}{V}) &= H \left( \bigoplus_{n\geq 0} P_\infty (n) \otimes_{\Sigma_n} V^{\otimes n} \right) 
         = \bigoplus_{n\geq 0} \left( HP_\infty (n) \otimes_{\Sigma_n} HV^{\otimes n}\right) =\\
         &= \bigoplus_{n\geq 0}\left( P(n) \otimes_{\Sigma_n} V^{\otimes n} \right) 
         = \foa{V} = HA \ .
\end{align*}
\end{proof}

\begin{example} For every connected pointed topological space $X$ its double loop space $\Omega^2 X$ has an action of the little disk operad $\D_2$, \cite{BoVo73}. Hence,
the singular chain complex $C_*\Omega^2 X$ is an algebra over the operad of the chain complex $C_*\D_2$, and the homology $H_*\Omega^2X$ is an algebra over the operad $H_*\D_2$. This is true with any coefficients, in particular over $\QQ$. From now on, we assume rational coefficients everywhere.

By the results of Cohen's thesis \cite{CLM76}, the homology $H_*\D_2$ is isomorphic to the Gerstenhaber operad: $H_*\D_2 \cong \Ger$. Therefore every homology 
$H_*\Omega^2 X$ carries a natural structure of a $\Ger$-algebra.
Furthermore, Tamarkin \cite{Tam03} showed that $C_*\D_2$ is a formal operad; that is, its minimal model in the sense of Markl is also the minimal model of its homology. This gives quasi-isomorphisms
\[
\Ger \cong H_*\D_2 \stackrel{\sim}{\longleftarrow} \Ger_\infty \stackrel{\sim}{\longrightarrow} C_*\D_2 \ .
\]
In particular, we have that every $C_*\D_2$-algebra is naturally a $\Ger_\infty$-algebra. More specifically,
$C_*\Omega^2X $ has a natural structure of a $\Ger_\infty$-algebra. We will compute the minimal model of some of these $C_*\Omega^2X$ as such.

For this, we rely on the fact that the rational homology $H_*(\Omega^2\Sigma^2 X)$ of the double loop space of the double suspension of $X$ is
free as a Gerstenhaber algebra, over the reduced homology of $X$ (see \cite{Getzler}, Section 1, cf. \cite{GeJo},
Theorem 6.1 and the generalization in \cite{SN03}, Theorem 6.5):
\[
H_*(\Omega^2\Sigma^2 X) \ \cong \ \Ger\langle \widetilde{H}_*X \rangle \ .
\]
By Proposition $\ref{modelfromhomology}$, we obtain a Sullivan minimal model as a $\Ger_\infty$-algebra
\[
\rho: \Ger_\infty \langle \widetilde{H}_*X \rangle \stackrel{\sim}{\longrightarrow} C_*(\Omega^2\Sigma^2X).
\]
For instance, for $n>2$, the minimal model of $\Omega^2S^{n+1} = \Omega^2\Sigma^2S^{n-1}$ is the free $\Ger_\infty$-algebra  $\Ger_\infty \langle e_{n-1} \rangle$ on a single generator $e_{n-1}$ in degree $n-1$. See \cite{Gi04}, Theorem 3.6, for a handy description of $ \Ger_\infty$-algebras.

This answers, we believe, a question of Getzler-Jones (\cite[Section 6]{GeJo}) about Sullivan minimal models for double loop spaces being unable to reflect the Gerstenhaber structure.
\end{example}

\subsubsection*{Acknowledgements} The first named author would like to thank Muriel Livernet and Dennis Sullivan for helpful comments.
The second named author thanks Vicen\c{c} Navarro, Pere Pascual and Francisco Guill\'{e}n for useful exchange of ideas, Neil Strickland for explaining the isomorphism describing the homology  $H_*(\Omega^2\Sigma^2 X)$ as a free Gerstenhaber algebra and Andy Tonks for providing valuable references. 
We would also like to thank Daniel Tanr\'{e} for his useful questions as well as the referees for their questions and suggestions.

\linespread{1}
\bibliographystyle{amsalpha}
\bibliography{bibliografia}
\mbox{}\\
\linespread{1.2}

\end{document}